\documentclass[reqno]{amsart}

\usepackage{amsmath, amssymb, amsfonts, amsthm}
\usepackage{mathtools,bbm}
\usepackage{tikz}

\renewcommand{\theequation}{\arabic{section}.\arabic{equation}}

\theoremstyle{plain}
\newtheorem{theorem}{Theorem}[section] 

\newtheorem{corollary}[theorem]{Corollary}
\newtheorem{lemma}[theorem]{Lemma}
\newtheorem{proposition}[theorem]{Proposition}
\newtheorem{definition}[theorem]{Definition}

\newtheorem*{assumpintro}{Assumption}

\theoremstyle{remark}
\newtheorem{remark}{Remark}[section]

\usepackage{graphicx}
\usepackage{bm}
\usepackage{stmaryrd}
\usepackage{pdfpages}

\newcommand\C{\mathbb{C}}

\newcommand\R{\mathbb{R}}

\newcommand\cX{\mathcal{X}}
\newcommand\cR{\mathcal{R}}
\newcommand\cS{\mathcal{S}}
\newcommand\cT{\mathcal{T}}

\newcommand\cQ{\mathcal{Q}}

\newcommand\cD{\mathcal{D}}

\newcommand\diam{\operatorname{diam}}
\newcommand\dist{\operatorname{dist}}

\renewcommand\Re{\operatorname{Re}}
\renewcommand\Im{\operatorname{Im}}

\newcommand\osc{\operatorname{osc}}
\renewcommand\Pr[2]{\operatorname{Pr}[#1\,;#2]}

\newcommand\Unif{\textsc{Unif}}
\newcommand\UnifD{\textsc{Unif}($\delta$)}
\newcommand\LipK[2]{\textsc{Lip}($#1$,$#2$)}
\newcommand\ExpFat[2]{\textsc{ExpFat}($#1$,$#2$)}

\newcommand\xskip[1]{}

\begin{document}

\title[Strong RSW estimates for the FK-Ising model on s-embeddings]
{Strong RSW estimates for the FK-Ising model on~s-embeddings}

\author[Dmitry Chelkak]{Dmitry Chelkak}
\author[Yanqing Wei]{Yanqing Wei}

\address{University of Michigan, Department of Mathematics, Ann Arbor, MI 48109-1043, USA}

\email{\texttt{dchelkak@umich.edu}, \texttt{yqw@umich.edu}}

\subjclass[2020]{{Primary 82B20; Secondary 30G25}}

\keywords{{Ising model, s-embeddings of planar graphs, crossing estimates}}

\begin{abstract} 
We adapt the `induction over scales' scheme from~\cite{duminil-manolescu-tassion-fractal} to the FK-Ising model on s-embeddings. Namely, we prove the existence of crossings of rough topological quadrilaterals with unfavorable boundary conditions, known as `strong' RSW, under the `uniformly bounded geometry' assumption (all edge lengths are comparable to~$1$ and all angles are bounded from below) on the tangential quads that form an s-embedding. This result has a number of standard consequences like quasi-multiplicativity of arm events and fractal properties of the clusters. It can be applied in a variety of setups including near-critical Ising models on regular grids below the correlation length scale by considering appropriate s-embedding thereof.
\end{abstract}

\maketitle

\section{Introduction}

\subsection{Context and related works} \label{subsec:context}
S-embeddings of weighted planar graphs is a framework suggested in~\cite{chelkak-icm18} and developed in~\cite{chelkak-semb} that allows one to study the (possibly, inhomogeneous) Ising model on a given graph by \emph{re-embedding} it into the complex plane in such a way that modern discrete complex analysis techniques can be applied to Kadanoff--Ceva fermionic observables. A model example of this procedure for the massive Z-invariant weights on isoradial graphs has been discussed in~\cite[Section~3.3]{chelkak-izyurov-mahfouf}. In this setup, the new embedding can be constructed explicitly using the rich algebraic structure of the model. A broader class of near-critical models on the square grid was later treated in~\cite{mahfouf-near-critical} using a dynamical approach: as one changes the interaction parameters starting from the critical homogeneous case, the evolution of {fermionic} correlators can be controlled via the resolvent identity, which allows one to construct a `nice' s-embedding of the near-critical Ising model under consideration.

RSW-type estimates, originally developed by Russo and Seymour--Welsh in the Bernoulli percolation context, are standard basic tools to control the geometry of clusters in a planar percolation model. In order to apply these techniques to the classical spin Ising model one passes to its random cluster or Fortuin--Kasteleyn (FK) representation and views it as a dependent percolation that encodes the spin correlations by connectivity properties of random clusters; e.g., see~\cite{grimmett-random-cluster-book}. For the critical FK-Ising model on~$\mathbb{Z}^2$, presumably the simplest setup comes from considering the probability of a horizontal crossing in a rectangle $[-m;m]\times[-n;n]$ with self-dual boundary conditions: wired on the vertical sides and free on the horizontal ones. One can use discrete complex analysis tools to prove (see~\cite[Section~6]{chelkak-smirnov-ising}) that this probability is uniformly bounded from below if so is the aspect ratio~$n/m$. Using monotonicity arguments and the FKG inequality it is not hard to deduce from this estimate a uniform lower bound for the probability that an annulus $[-3n;3n]^2\smallsetminus [-n;n]^2$ with \emph{free} boundary conditions contains a wired circuit; see~\cite[Proposition~2.10]{duminil-garban-pete} and, e.g.,~\cite[Section~5.6]{chelkak-semb}. By self-duality, this implies the polynomial decay of the magnetization at the center of a large box in the classical spin representation of the Ising model. In the s-embeddings context this strategy has recently been implemented in~\cite{mahfouf-RSW}; see Theorem~\ref{thm:RSW-Mahfouf} below. 

The RSW-type estimates described above are not sufficient for all the purposes: often, a difficulty comes from the need to know that a wired cluster \emph{touches} a nearby free boundary arc with probability that is uniformly bounded from below. The first result of this kind for the FK-Ising model on~$\mathbb{Z}^2$ was obtained in~\cite{duminil-hongler-nolin} for \emph{straight} boundaries: the probabilities of horizontal crossings in rectangles are uniformly bounded from below even if one imposes free boundary conditions on all four sides. However, even this is not enough if one wants, e.g., to prove that two distinct nearby clusters touch each other with good probability. (Such statements are important, e.g., for the proof of convergence of Ising and FK-Ising loop ensembles to CLEs; see~\cite{ising-5authors,benoist-hongler,kemppainen-smirnov-ii,garban-wu}.) This motivates replacing straight rectangles by arbitrary topological quads $(\cD;a,b,c,d)$, that is, simply connected discrete domains with four marked boundary points, for which one aims to prove the existence of a crossing between two opposite boundary arcs~$(ab)$ and~$(cd)$ under free boundary conditions. For the FK-Ising model on~$\mathbb{Z}^2$ (and, mutatis mutandis, on isoradial graphs) this has been done in~\cite{chelkak-duminil-hongler} based upon techniques developed in~\cite{chelkak-toolbox}. This proof relies upon very special properties of discrete holomorphic functions on rhombic lattices, which makes it impossible to adapt it to the s-embeddings context.

More recently, a much more powerful approach to such `strong' RSW estimates (that is, uniform lower bounds for the probability that a wired cluster touches a rough boundary arc with free boundary conditions) has been developed by Duminil-Copin, Manolescu, and Tassion in~\cite{duminil-manolescu-tassion-fractal}, where those were proved for all FK($q$) percolation models on~$\mathbb{Z}^2$ with $q\in [1;4)$, of which the FK-Ising model is a special case~$q=2$. In our paper we adapt this approach. 
It is worth saying that 
this adaptation is not straightforward for two reasons. First, \cite{duminil-manolescu-tassion-fractal} uses symmetries of~$\mathbb{Z}^2$ that we do not have in our setup. Second, \cite{duminil-manolescu-tassion-fractal} relies upon a crucial `positivity' property of (para)fermionic observables on straight cuts of~$\mathbb{Z}^2$ for which one needs to find a proper replacement. Such a replacement -- so-called \emph{`special cuts'} (or s-cuts) -- was proposed in~\cite[Section~5]{chelkak-semb}. Still, incorporating this notion into the framework of~\cite{duminil-manolescu-tassion-fractal} requires a non-trivial work; {see Section~\ref{subsec:organization} below for more comments.}

We do not detail standard applications of our main result and refer the reader to~\cite[Section~1.4]{duminil-manolescu-tassion-fractal} 
instead. Let us nevertheless mention a couple of them explicitly:
\begin{itemize}
\item Using appropriate re-embedding procedures (e.g., see~\cite{mahfouf-near-critical}), one can translate standard applications of the `strong' RSW to universal arm exponents \cite[Section~6.3]{duminil-manolescu-tassion-fractal} to various near-critical setups of interest on regular grids.
\item Following~\cite{ising-5authors,benoist-hongler,kemppainen-smirnov-ii,garban-wu}, one can upgrade very recent results of Park~\cite{park-maximal} on the convergence of Ising interfaces to SLE curves in a \emph{non-trivial complex structure} to the convergence to CLEs of full discrete loop ensembles.
\item Following~\cite{camia-feng-Ising}, one can use this convergence to CLE to deduce the convergence of multipoint spin correlations normalized by probabilities of one-arm events to CFT correlations in the same complex structure. {See also a very recent work~\cite{cahen-garban-sepuvelda} for an extended discussion of the links between convergence results for various objects arising in the critical planar Ising model.}
\end{itemize}

\begin{figure}
\begin{minipage}{0.47\textwidth}
\centering \includegraphics[clip, trim=9.8cm 16.1cm 6.36cm 7cm, width=\textwidth]{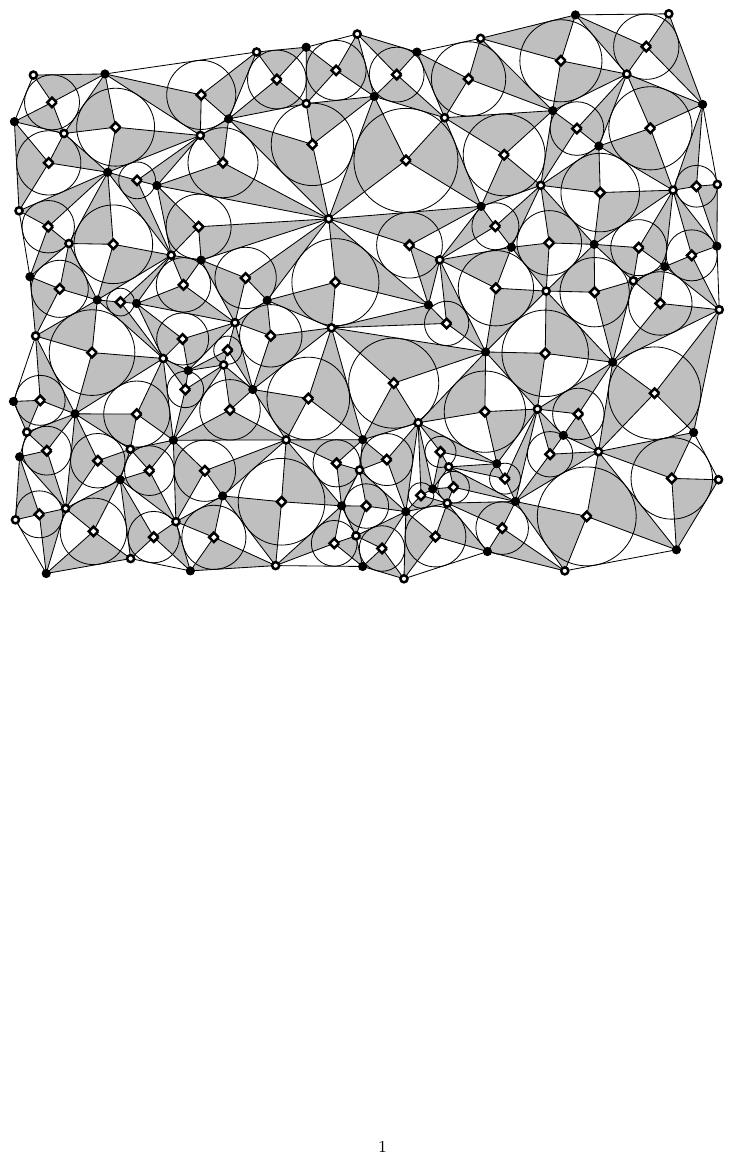}
\end{minipage}
\hskip 0.05\textwidth
\begin{minipage}{0.47\textwidth}
\includegraphics[clip, trim=4.5cm 16.7cm 9.8cm 4cm, width=\textwidth]{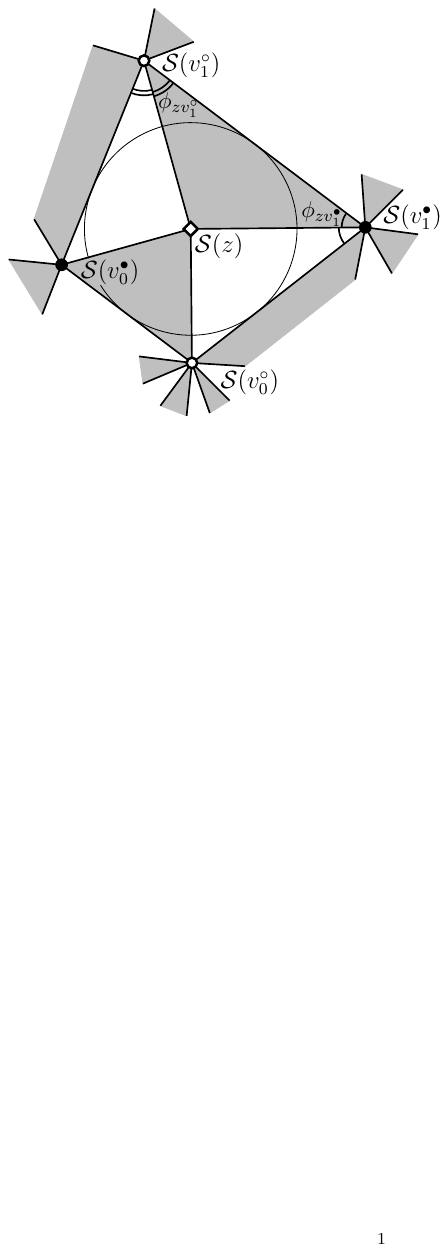}
\end{minipage}
\caption{\label{fig:Sdef} {\textsc{Left:} A piece of a tiling of the plane by tangential quads, called an s-embedding in~\cite{chelkak-semb}. The FK-Ising model is defined on the graph~$G^\bullet$ formed by black nodes and is dual to the similarly defined model on the dual graph~$G^\circ$ formed by white nodes. \textsc{Right:} Notation for half-angles~$\phi_{zv}\in (0;\pi)$ of a tangential quad.}}
\end{figure}
 
\subsection{Definition of the model and main results} An \emph{s-embedding} $\cS$ is a tiling of the plane by tangential quads. The vertices of such a tiling form two bipartite classes, $G^\bullet$ and~$G^\circ$, that can be viewed as planar graphs that are dual to each other; {see Fig.~\ref{fig:Sdef}.} Formally, $\cS$ is a mapping from the vertex sets of~$G^\bullet$ and~$G^\circ$ to the complex plane that assigns the coordinate~$\cS(v)\in\C$ to a vertex~$v\in G^\bullet\cup G^\circ$. For shortness, sometimes we will not distinguish~$v$ and~$\cS(v)$ in the notation if no confusion arises. One defines the FK-Ising model (Fortuin--Kasteleyn or random cluster representation of the Ising model) on~$G^\bullet$ as follows. First, given a tangential quad $z=(v_0^\bullet v_0^\circ v_1^\bullet v_1^\circ)$ of~$\cS$, let us denote (e.g., see~\cite[Eq.~(6.3)]{chelkak-icm18})
\begin{equation}
\label{eq:theta=}
\theta_z:=\arctan \left(\frac{\sin\phi_{zv^\bullet_0}\sin\phi_{zv^\bullet_1}}{\sin\phi_{zv^\circ_0}\sin\phi_{zv^\circ_1}}\right)^{\!\!1/2}\in\, (0;\tfrac\pi 2)\,,
\end{equation}
where $\phi_{zv}$ denotes the \emph{half-angle} of a tangential quad~$z$ at its vertex~$v$. Note that the set of tangential quads is in bijection with the edges of~$G^\bullet$. An FK-Ising model on (a finite part of) $G^\bullet$ is a dependent bond-percolation model in which the probability of a given configuration is proportional to its weight
\[
\prod_{z} \biggl(\frac{p_z}{1-p_z}\biggr)^{\mathbbm{1}[\mathrm{the~edge~of}~G^\bullet~\mathrm{in}~z~\mathrm{is~open}]}\times\, 2^{\#\mathrm{clusters}}\,,\qquad
p_z=\frac{2\tan\frac12\theta_z}{1+\tan\frac12\theta_z}\,.
\]

This definition can be viewed as a generalization of the critical Z-invariant Baxter weights on isoradial graphs for which the aforementioned tangential quads are rhombi with half-angles~$\theta_z$. The main advantage of this setup is that it removes all algebraic and combinatorial constraints on the class of weighted planar graphs carrying the Ising model that admit such a geometric representation. Note that interchanging the roles of~$G^\bullet$ and~$G^\circ$ results in replacing~$\theta_z$ by~$\theta_z^*=\frac\pi2-\theta_z$, which is equivalent to the self-duality identity $p_z p_z^*=2(1-p_z)(1-p_z^*)$. We refer the reader to~\cite{chelkak-semb} for a further discussion. 

In this paper we assume that the tiles of~$\cS$ have `uniformly bounded geometry':

\begin{assumpintro}[\Unif] There exists a constant~$c_0>0$ such that all edges of~$\cS$ are uniformly comparable to~$1$:
\[
c_0\,\le\,|\cS(v^\bullet)-\cS(v^\circ)|\,\le\,c_0^{-1}\ \ \text{if~$v^\bullet\in G^\bullet$ and~$v^\circ\in G^\circ$ are adjacent,}
\]
and all half-angles~$\phi_{zv}$ of quads in~$\cS$ are uniformly bounded from below by $c_0$. 
\end{assumpintro}
From here onwards we view the constant~$c_0$ as fixed once forever; all other constants that appear below may depend on~$c_0$ even if this is not mentioned explicitly. While the assumption \Unif\ may seem too restrictive, it is known to hold, e.g., for s-embeddings of various near-critical Ising models on regular grids below the correlation length scale; see~\cite[Section~3.3]{chelkak-izyurov-mahfouf} and~\cite{mahfouf-near-critical}. 

Denote by~$B_R=B_R(0)$ the Euclidean ball of radius~$R>0$ centered at the origin. Let~$\mathbb{P}^0_U$ be the probability measure on FK-Ising configurations in a domain~$U$ with free boundary conditions and~$\mathrm{Circ}(A)$ be the event that such a configuration contains a cluster that encircles the set~$A$. The following theorem is a particular case of a more general result (see Remark~\ref{rem:RSW-Mahfouf} below) obtained by Mahfouf~\cite{mahfouf-RSW}.

\begin{theorem}[RSW,~\cite{mahfouf-RSW}]\label{thm:RSW-Mahfouf} There is a constant $p_\mathrm{circ}=p_\mathrm{circ}(c_0)>0$ such that
\begin{equation}
\label{eq:intro-circ}
\mathbb{P}^0_{B_R\smallsetminus B_{\frac12R}}\big[\mathrm{Circ}(B_{\frac12R})\big]\,\ge\,p_\mathrm{circ}\,.
\end{equation}
\end{theorem}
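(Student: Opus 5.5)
The proof follows the classical route recalled above: first show that crossings of ``rectangle-like'' quads with self-dual (alternating wired/free) boundary conditions have probability bounded below, then use monotonicity and FKG to build a wired circuit in the annulus with free boundary conditions.

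\emph{Step 1: a uniform lower bound for self-dual crossings.} Fix a discrete quad $(\cD;a,b,c,d)$ of size comparable to $R$ whose extremal length is bounded above and below. For s-embeddings the role of the conformal scale is played by the primitive $\cQ$ of $\eta^2$ constructed in~\cite{chelkak-semb} (the ``origami'' function). Under \Unif\ this function is uniformly Lipschitz with constant $<1$ relative to $\cS$, so $\cS+\cQ$ and $\cS-\cQ$ are bi-Lipschitz.

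Take wired boundary conditions on $(ab)$ and $(cd)$ and free ones on $(bc)$ and $(da)$. The crossing probability is then expressed through a fermionic observable $F$. Its integral $H=\int\Im(F^2\,d\cS+|F|^2d\cQ)$ solves a discrete boundary value problem with constant values on each arc. The two constant values encode the crossing probability and its dual.

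The a priori regularity theory for $s$-holomorphic functions from~\cite{chelkak-semb} (Harnack-type and Hölder estimates for $H$, which is a sub-/super-harmonic function on $G^\bullet$/$G^\circ$) then lets me compare $H$ with the continuous harmonic measure of the quad. The output is that both the primal and dual crossing probabilities lie in $[c,1-c]$, with $c$ depending only on $c_0$ and the extremal-length bounds. \textbf{This is the main obstacle.} One must control $H$ near rough discrete boundaries with only \Unif\ at hand and no lattice symmetries. I would first try to reduce to quads whose sides are unions of edges of $\cS$, so that the boundary Harnack estimates of~\cite{chelkak-semb} apply directly.

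\emph{Step 2: from self-dual to free boundary conditions.} Partition the annulus $B_R\smallsetminus B_{\frac12R}$ into a bounded number of overlapping quads $Q_1,\dots,Q_N$ arranged cyclically. Each $Q_j$ is a ``long'' quad in the angular direction, and consecutive crossings are forced to intersect, in the manner of~\cite[Proposition~2.10]{duminil-garban-pete} and~\cite[Section~5.6]{chelkak-semb}. To handle the free boundary, use the standard trick: inside a slightly thinner annulus, first explore the \emph{dual} clusters attached to the boundary from outside. By the domain Markov property and monotonicity, conditioned on this exploration the remaining domain has boundary conditions that dominate free ones only where they are wired. Equivalently, it suffices to show that with probability bounded below there is a wired crossing of each $Q_j$ in its long direction, under boundary conditions free on the long sides and wired on the short ones. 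By self-duality and Step 1, this probability is at least $1-(1-c)=c$ for quads of bounded extremal length. Extremal lengths must be chosen so that the primal probability, rather than the dual one, is the favourable one; this is arranged by using thin quads with short wired ends and applying Step 1 to their duals.

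\emph{Step 3: gluing.} By FKG, positive correlation of the increasing events ``$Q_j$ is crossed'' gives probability at least $c^N$ that all of them occur simultaneously, yielding a wired circuit around $B_{\frac12R}$. Finally, comparison between boundary conditions (free being the minimal one) is used once more: it allows passing from the ``wired ends'' auxiliary measures in each $Q_j$ to $\mathbb{P}^0_{B_R\smallsetminus B_{\frac12R}}$, at the cost of the constant. I will do this by nesting: realize the crossings inside an inner sub-annulus whose boundary conditions, after conditioning on the exterior configuration, stochastically dominate free. Small scales $R\lesssim 1$ are trivial by \Unif, since only boundedly many edges are involved, each open with probability bounded below. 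Hence $p_\mathrm{circ}=\min(c^N,\text{small-scale bound})>0$.
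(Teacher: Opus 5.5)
Your architecture (self-dual crossing bounds, then monotonicity and FKG to get a free-b.c.\ circuit) is the one indicated in Remark~\ref{rem:RSW-Mahfouf-alt}; the paper itself imports Theorem~\ref{thm:RSW-Mahfouf} from~\cite{mahfouf-RSW}. The genuine gap is that your proposal does not contain the idea that makes Step~1 work on s-embeddings.

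Comparing $H$ with a continuum harmonic measure is not available under \Unif\ alone. Subsequential limits of rescaled s-surfaces are merely $\kappa$-Lipschitz space-like surfaces, and limits of the observables are only quasiregular in the chart $w=z-i\vartheta(z)$ (Section~\ref{subsub:(v)-(vi)}). So there is no canonical harmonic measure to compare with, and one argues by contradiction instead. Moreover, the a priori theory (Proposition~\ref{prop:regularity_estimates}) controls $F$ only away from the boundary. As stressed in step~(vii) of Section~\ref{subsec:sketch-lower-bound}, this is too weak to pin down the boundary values of limits of $H^\delta$. Your proposed reduction to `quads whose sides are unions of edges' is vacuous, since every discrete quad is of this form.

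What Remark~\ref{rem:RSW-Mahfouf-alt} points to is a choice of geometry: the wired arcs should be special cuts (Definition~\ref{def:s-cut}). On such arcs the observable has a definite sign, $\Re F\asymp|F|>0$ (Proposition~\ref{prop:observables-boundary}(iii)). So $\Re F$, a martingale for the backward walk on the S-graph stopped on the cut, has positive boundary data there. This gives Lemmas~\ref{lem:ReF-in-R(y)} and~\ref{lem:H-in-R(y)}: $H^\delta$ is on average close to its boundary value next to the arc. That is exactly the input needed to adapt the proof of~\cite[Theorem~1.3]{chelkak-semb}.

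Also, asking for Step~1 in \emph{arbitrary} quads of bounded extremal length is more than you need. With rough wired arcs it is a special case of the strong RSW corollary stated after Theorem~\ref{thm:main-theorem}, whose proof uses Theorem~\ref{thm:RSW-Mahfouf} as input; nice quads suffice. (Minor: $\Im(F^2d\cS+|F|^2d\cQ)$ drops the $d\cQ$ term; the form is $\frac12(\Im[F^2d\cS]+|F|^2d\cQ)$.)

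Steps~2--3 do not yield a lower bound under $\mathbb{P}^0$ as written. Free boundary conditions are minimal, so monotonicity only gives $\mathbb{P}^0_{B_R\smallsetminus B_{\frac12R}}[Q_j\ \text{crossed}]\ge\mathbb{P}^0_{Q_j}[Q_j\ \text{crossed}]$. But $\mathbb{P}^0_{Q_j}\le\mathbb{P}^\xi_{Q_j}$ for $\xi$ wired on the short ends, so the comparison goes the wrong way for transferring Step~1. Knowing that the conditional boundary conditions `dominate free' is automatic and gives nothing.

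Consequently, your reduction (`it suffices to show \dots\ wired on the short ones') and the bound $c^N$ are unjustified. FKG has to be applied under the single measure $\mathbb{P}^0_{B_R\smallsetminus B_{\frac12R}}$, and there $\mathbb{P}^0[Q_j\ \text{crossed}]\ge c$ is precisely what is unknown. You need a mechanism producing, with probability bounded below under the free measure, configurations in which each $Q_j$ effectively sees wired ends. This is the nontrivial content of the deduction the paper takes as known (\cite[Proposition~2.10]{duminil-garban-pete}, \cite[Section~5.6]{chelkak-semb}).

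Exploring the dual clusters attached to the boundary does not provide it. It only reveals primal-open edges along the uncontrolled frontier of these clusters, and that this frontier closes up around $B_{\frac12R}$ is the event $\mathrm{Circ}(B_{\frac12R})$ itself. (Also, with wired short ends the long crossing is the unfavourable event; bounded extremal length is all you need there, with no passage to duals.)
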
 
\begin{remark}\label{rem:RSW-Mahfouf}
In fact, Theorem~\ref{thm:RSW-Mahfouf} holds in a much more general setup. Namely, it is proved in~\cite{mahfouf-RSW} on all scales~$R\ge C\delta'$ under  so-called assumptions \LipK{\kappa}{\delta}\ and \ExpFat{\delta}{\delta'}; see~\cite{chelkak-semb} for definitions. This is the maximal setup in which discrete complex analysis techniques developed in~\cite{chelkak-semb,CLR1} can be directly applied. Assumption \textsc{Lip}\ is conceptual and cannot be dropped (e.g., see a discussion after~\cite[Eq.~(1.12)]{chelkak-semb}). The status of the `technical' regularity assumption~\textsc{ExpFat} on~$\cS$ is less clear: to the best of our knowledge, it is not known whether it is indeed necessary for the existence of circuits~\eqref{eq:intro-circ} on scales~$R\ge C\delta$ or not. 
\end{remark}
\begin{remark}\label{rem:RSW-Mahfouf-alt} In the setup of this paper -- i.e., under assumption~\Unif\ -- Theorem~\ref{thm:RSW-Mahfouf} can also be proved similarly to~\cite[Theorem~1.3 and Corollary~1.4]{chelkak-semb}. Namely, it is well known 
that the estimate~\eqref{eq:intro-circ} follows from the existence of crossings in sufficiently long quadrilaterals with \emph{self-dual} (i.e., wired/free/wired/free) boundary conditions and nice boundaries. If the two wired parts of such a quadrilateral are `special' cuts (see Section~\ref{subsec:cuts} below for a definition), then a near-to-boundary analysis of fermionic observables similar to the one presented in Section~\ref{subsec:step(vii)} of this paper allows one to adapt the proof of~\cite[Theorem~1.3]{chelkak-semb} to our, more general, setup with minimal changes only. See also~\cite[Section~5.5.1]{chelkak-semb}.
\end{remark}

The main result of our paper is the following theorem:

\begin{theorem}\label{thm:main-theorem}
Consider the Ising model on an s-embedding~$\cS$ satisfying the assumption~\Unif. There exists a constant~$p_*=p_*(c_0)>0$ such that for each bounded simply connected domain~$\cD$ on~$\cS$ that contains the origin one has
\[
\mathbb{P}^0_\cD\big[B_{\frac12R}\xleftrightarrow{B_{5R}\,}\partial\cD\big]\,\ge\, p_*\,,
\]
where $R=\dist(0,\partial\cD)$ and $A\xleftrightarrow{U}B$ denotes the event that there exists an open path connecting two sets~$A$ and~$B$ inside~$U\cap\cD$. 
\end{theorem}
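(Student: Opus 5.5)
We follow the induction-over-scales strategy of Duminil-Copin, Manolescu and Tassion. For $r>0$ define the quantity
\[
p(r)\;:=\;\inf_{\cD}\ \mathbb{P}^0_\cD\big[B_{\frac12 R}\xleftrightarrow{B_{5R}}\partial\cD\big],
\]
where the infimum runs over all simply connected $\cD$ (on all s-embeddings satisfying \Unif\ with the fixed $c_0$, translated so that the relevant point is the origin) with $R=\dist(0,\partial\cD)\in[r,2r]$. For $R\le C_0$, a bounded constant, the bound holds trivially: \Unif\ bounds $p_z$ away from $0$ and $1$, and a path of boundedly many edges reaches $\partial\cD$. The theorem then amounts to showing $p(2r)\ge \min(p(r),p_*)$ for some fixed $p_*$; more precisely, we show that if the crossing probability at scale $R$ were small, it would be even smaller at scale $R/C$, and we iterate down to the trivial scale to get a contradiction.

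The core input is a "wired versus free" comparison near a rough boundary. Take a point $x\in\partial\cD$ with $|x|=R$ and consider the domain $\cD\cap B_{5R}$. Using Theorem~\ref{thm:RSW-Mahfouf} together with FKG, we get with probability $\ge p_\mathrm{circ}^{O(1)}$ a circuit in $B_{R}\smallsetminus B_{\frac12 R}$. Such a circuit is either a full circuit in $\cD$ (which already connects to $\partial\cD$ if it hits the boundary), or a family of open arcs that end on $\partial\cD$; this holds whenever $\partial\cD$ cuts the annulus. The difficulty is to force the arcs to actually touch $\partial\cD$ under free boundary conditions. We handle this with the parafermionic/fermionic observable in a domain whose wired part is a special cut (s-cut) placed in the annulus. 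The key estimate, replacing the positivity of the observable on straight cuts of $\mathbb{Z}^2$, bounds the expected number of boundary edges of $\partial\cD$ (at distance $\asymp R$) connected to the wired cut from below by a constant times the conditional quantities. Concretely, we would show
\[
\mathbb{E}^{0/1}\big[\#\{e\in\partial\cD\cap B_{2R}: e\leftrightarrow \text{s-cut}\}\big]\;\ge\; c\, R^{\alpha}
\]
for a suitable exponent matching the mixed-boundary behaviour. In the other direction, the second moment of this count is bounded by $C R^{2\alpha}$ using the induction hypothesis $p(\cdot)$ at smaller scales, via quasi-multiplicativity over dyadic annuli around each boundary point. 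Cauchy--Schwarz then gives a uniform lower bound on the probability that at least one boundary edge is connected, after which FKG and comparison of boundary conditions (free dominated by the wired-cut measure, conditioned on the Circ event shielding it) transfer the bound to $\mathbb{P}^0_\cD$.

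Finally, the scales are assembled as in the DMT argument. If $\partial\cD$ is "far" (does not meet $B_{2R}$ substantially), we reduce to a smaller domain by moving the origin toward the nearest boundary point and using the circuit estimates to link $B_{\frac12R}$ to the new ball. Otherwise the first-moment estimate applies directly. We expect the main obstacle to be the first-moment lower bound. It requires controlling the s-embedding fermionic observable near an arbitrary rough free boundary, with only \Unif\ and the s-cut positivity available. Here we would use the near-boundary analysis of Remark~\ref{rem:RSW-Mahfouf-alt}, namely the sign of the observable on the s-cut and the harmonic-measure-type lower bounds for the associated function $H$. The second-moment bound is routine given the induction hypothesis.
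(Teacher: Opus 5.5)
\paragraph{The second-moment step has no input.} Your first-moment step is in the right spirit: with exponent $\frac12$ it is what Propositions~\ref{prop:lower-bound-w/o-legs} and~\ref{prop:lower-bound-w/o-far} and Corollary~\ref{cor:lower-bound-to-bdry} provide. The genuine gap is the step that turns it into a probability bound.

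\begin{itemize}
\item Your estimate $\mathbb{E}[N^2]\le CR^{2\alpha}$ implies $\mathbb{E}[N]\le C^{1/2}R^{\alpha}$. That is an \emph{upper} bound on the expected number of edges of an arbitrary rough arc $\partial\cD$ connected to the wired cut, and nothing available gives it.
\item The only upper bound the fermionic observable produces is Proposition~\ref{prop:upper-bound}, and it needs the free arc to be a special cut. Only there do the increments $G(v_+^\circ)-G(v_-^\circ)$ have real part comparable to their modulus (Proposition~\ref{prop:observables-boundary}(iii)).
\item Along a rough arc the arguments of these increments rotate with the local geometry. The closed-form identity then yields only a \emph{lower} bound on the sum of connection probabilities, via $|\Re(\cdot)|\le|\cdot|$. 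There is no reason for this count to be $O(R^{1/2})$ on an arbitrary rough boundary.
\item The induction hypothesis cannot repair this. $p(\cdot)$ bounds connection probabilities from below and says nothing about $\mathbb{P}[e\leftrightarrow\gamma,\ e'\leftrightarrow\gamma]$ from above.
\item Quasi-multiplicativity of arm events near rough free boundaries is one of the \emph{consequences} of the theorem you are proving, so invoking it is circular.
\end{itemize}

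\paragraph{The missing idea: activation.} The paper uses upper bounds only on special cuts, and the induction hypothesis only as a lower bound.
\begin{itemize}
\item Approximate $\cD$ on a grid of mesh $2^{-2m}R$. Replace $\partial\cD$ by a closed curve $\gamma'=\bigcup_k\gamma'_k$ of special cuts of length $2^{-3m}R$.
\item Corollary~\ref{cor:lower-bound-to-bdry}, applied in the domain bounded by $\gamma'$, gives a total of order $R^{1/2}$. Comparing with the \emph{matching} bound $O((2^{-3m}R)^{1/2})$ of Proposition~\ref{prop:upper-bound} on each piece, the expected number $\xi$ of activated pieces is at least $\mathrm{cst}\cdot 2^{3m/2}$.
\item Each activated piece is linked to the true boundary $\partial\cD$ with probability at least $p_{\mathrm{circ}}^3\,p(2^{-2m-2}R)$.
\item From any $n$ activated pieces one can extract $\mathrm{cst}\cdot 2^{-m}n$ whose associated points are $3\cdot 2^{-2m}R$-separated, so these chances combine by monotonicity.
\item Since $2^{3m/2}\gg 2^{m}$, this yields $p(R)\ge\min\{p(2^{-2m_0-2}R),p'_*\}$.
\end{itemize}
This is exactly where the coincidence of the exponents in Propositions~\ref{prop:upper-bound} and~\ref{prop:lower-bound} is essential; your `suitable exponent' leaves it open.

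\paragraph{Two further gaps.}
\begin{itemize}
\item The closed-form identity in a bridge-shore domain involves the whole shore, including parts of $\partial\cD$ far outside $B_{5R}$. The paper builds many disjoint bridge-shore subdomains and uses Theorem~\ref{thm:RSW-Mahfouf} to select one with small radial-crossing probability (Proposition~\ref{prop:lower-bound-w/o-far}). Your `move the origin' step does not address this.
\item To pass from $\mathbb{P}^{0/1}_\Omega$ to $\mathbb{P}^0_\cD$, you need that, after exploring an open circuit around the bridge, the conditional measure \emph{dominates} $\mathbb{P}^{0/1}_\Omega$. The comparison you state, `free dominated by the wired-cut measure', goes the wrong direction for a lower bound.
\end{itemize}
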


Due to the FKG inequality and Theorem~\ref{thm:RSW-Mahfouf} our result implies that macroscopic FK-Ising clusters touch an arbitrarily rough free boundary with probability that is uniformly bounded from below. It is also known that it has the following corollary; see Section~\ref{subsec:context} above for a discussion of the importance of such a uniform estimate.
\begin{corollary}[`strong' RSW]
For each $M>0$, there is a constant \mbox{$p_0(M)\in (0;1)$} that also depends on~$c_0$ such that for any discrete quadrilateral $\left(\cD;a,b,c,d\right)$ and any boundary conditions $\xi$ on~$\partial\cD$,
\begin{itemize}
\item if 
$\mathrm{EL}_{\cD}\left[\left(ab\right),\left(cd\right)\right] \leq M$\,, then $\mathbb{P}_{\cD}^{\xi}[\left(ab\right)\xleftrightarrow{\cD}\left(cd\right)] \geq p_0(M)$;
\item if $\mathrm{EL}_{\cD}\left[\left(ab\right),\left(cd\right)\right] \geq M^{-1}$, then $\mathbb{P}_{\cD}^{\xi}[\left(ab\right)\xleftrightarrow{\cD}\left(cd\right)] \leq 1-p_0(M)$,
\end{itemize}
where~$(ab)$, $(bc)$, $(cd)$, and~$(da)$ stand for four boundary arcs of~$\cD$ and we denote by $\mathrm{EL}_\cD[(ab),(cd)]$ the extremal length of the family of curves connecting the opposite boundary arcs~$(ab)$ and~$(cd)$ inside~$\cD$.
\end{corollary}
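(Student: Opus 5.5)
This corollary follows from Theorem~\ref{thm:main-theorem} and Theorem~\ref{thm:RSW-Mahfouf} by the standard argument of~\cite[Section~1.4]{duminil-manolescu-tassion-fractal}; we only sketch the steps.

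\emph{Reductions.} By monotonicity in boundary conditions, it is enough to prove the first bullet for free boundary conditions $\xi=0$, since this is the least favorable case for an open crossing. By self-duality, the second bullet is the first bullet applied to the dual model on~$G^\circ$. Indeed, the event that $(ab)$ is not connected to $(cd)$ coincides with the event that the dual arcs $(bc)$ and $(da)$ are connected by a dual-open path. Moreover, $\mathrm{EL}_\cD[(bc),(da)]=1/\mathrm{EL}_\cD[(ab),(cd)]\le M$. Interchanging $G^\bullet$ and~$G^\circ$ replaces $\theta_z$ by $\frac\pi2-\theta_z$ and preserves \Unif. The worst-case boundary conditions for the dual crossing are wired for the primal model, i.e., free for the dual one. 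So we may assume $\xi=0$ and $\mathrm{EL}_\cD[(ab),(cd)]\le M$.

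\emph{Building the crossing from local pieces.} I would produce a chain of balls $B_{r_k}(x_k)$, $k=0,\dots,N$, with $N=N(M)$, such that:
\begin{itemize}
\item consecutive balls are comparable in size and overlap substantially;
\item the first ball touches $(ab)$ and the last touches $(cd)$;
\item each $x_k$ lies at distance comparable to~$r_k$ from~$\partial\cD$, for instance $r_k=\tfrac12\dist(x_k,\partial\cD)$.
\end{itemize}
Such a chain comes from the conformal-invariance characterization of extremal length. Fix a uniformizing map of~$\cD$ onto a rectangle with aspect ratio $\mathrm{EL}\le M$. By Koebe distortion, a Whitney-type chain of boundedly many hyperbolic-unit discs along the middle horizontal line joins neighbourhoods of the two short sides. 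Near the ends, I would use Whitney balls accumulating at $(ab)$ and~$(cd)$. The count $N$ of these must stay bounded in terms of~$M$, and obtaining this bound is the delicate point. One way is to localize at the scale where a ball of radius $\dist(x,\partial\cD)$ touches the arc. Microscopic scales, where $r_k\lesssim 1$, are handled by finite energy, since \Unif\ gives $p_z$ bounded away from $0$ and~$1$.

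\emph{Gluing and main obstacle.} In each ball, Theorem~\ref{thm:RSW-Mahfouf} applied in the annulus $B_{2r_k}(x_k)\smallsetminus B_{r_k}(x_k)$, or a slightly shifted version, gives an open circuit with free boundary conditions on the annulus. Consecutive circuits intersect, and these are increasing events. At the two ends, Theorem~\ref{thm:main-theorem}, applied to $\cD$ recentred at $x_0$ (resp.~$x_N$), connects $B_{r/2}$ to~$\partial\cD$ within $B_{5r}$. The remaining task, and the main obstacle, is to make sure this connection lands on $(ab)$ rather than elsewhere on~$\partial\cD$. For this I would choose the endpoint balls deep inside the arc, at scale much smaller than the distance to $a,b$ measured in extremal length. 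Then I would use Theorem~\ref{thm:main-theorem} for an auxiliary simply connected subdomain $\cD'\subset\cD$ whose boundary near the ball consists only of points of~$(ab)$ and dual paths separating it. Comparison of boundary conditions, free in $\cD'$ versus free in~$\cD$, goes the right way by monotonicity under the domain Markov property. Multiplying the $N+2$ bounds via FKG gives $p_0(M)\ge p_\mathrm{circ}^{\,N}p_*^2>0$.
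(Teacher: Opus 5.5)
Your reductions are fine. Free boundary conditions are the worst case. The second bullet is the first one for the dual model on $G^\circ$, which again satisfies \Unif, because $\mathrm{EL}_\cD[(bc),(da)]=\mathrm{EL}_\cD[(ab),(cd)]^{-1}$. The Whitney chain glued by RSW circuits from Theorem~\ref{thm:RSW-Mahfouf} is also fine, but only at bounded hyperbolic distance from a bulk point. A chain of $N(M)$ balls necessarily stops at some $B_r(x_0)$ with $r\asymp\dist(x_0,\partial\cD)$ whose hyperbolic distance from the bulk is $O_M(1)$.

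All the difficulty of the `strong' statement sits in the last step, and there your argument has a genuine gap. Theorem~\ref{thm:main-theorem} centred at $x_0$ only gives a path to whatever part of $\partial\cD$ can be reached from $x_0$ inside $B_{5\dist(x_0,\partial\cD)}(x_0)$. Knowing that $x_0$ is conformally close to the middle of $(ab)$, or that its nearest boundary point lies on $(ab)$, does not tell you whether $(bc)\cup(cd)\cup(da)$ is reachable within this Euclidean ball. Think of a thin wall or a narrow passage of these arcs next to $(ab)$. Escaping such features by going `deeper' costs more balls, so `deep inside the arc' and `$N=N(M)$' pull in opposite directions. You supply no lemma that reconciles them.

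The auxiliary domain does not repair this. Theorem~\ref{thm:main-theorem} in $\cD'$ connects $B_{R'/2}(x)$, with $R'=\dist(x,\partial\cD')$, to $\partial\cD'$ inside $B_{5R'}(x)$. The artificial part of $\partial\cD'$ (your separating `dual paths') is as good a landing place as $(ab)$ unless it stays outside $B_{5R'}(x)$. Requiring that is the same unproven geometric statement.

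If these dual paths are random, the argument breaks further. They are decreasing events, so they cannot be multiplied with the circuits and the increasing connection events via FKG. Once explored, they carry free boundary conditions onto which the path from Theorem~\ref{thm:main-theorem} may land.

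What is missing is a statement of the following kind. The cluster of such an end ball reaches the \emph{prescribed} rough arc $(ab)$, and not merely $\partial\cD$, with probability bounded below in terms of $M$ only. This is the nontrivial content of the deduction in \cite[Section~4]{duminil-manolescu-tassion-fractal}, to which the paper defers its proof (it is Section~4 there, not Section~1.4, that derives the quad estimate from the boundary-touching estimate). Your sketch identifies the obstacle but does not overcome it.
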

\begin{proof}
See~\cite[Section~4]{duminil-manolescu-tassion-fractal}, in which a similar result~\cite[Theorem~1.2]{duminil-manolescu-tassion-fractal} for the FK($q$), $q\in [1;4)$, percolation model on the square grid is shown to be a corollary of~\cite[Proposition~1.3]{duminil-manolescu-tassion-fractal}, which is similar to our Theorem~\ref{thm:main-theorem}.
\end{proof}

{

\subsection{Organization of the paper and key ingredients of the proof} \label{subsec:organization} We recall the key geometric notion of a space-like \emph{s-surface} $\Theta\subset\R^{2,1}$ associated with an s-embedding and that of `special cuts' in Sections~\ref{subsec:s-surfaces} and~\ref{subsec:cuts}, respectively. The technical cornerstone of our proof is matching upper and lower bounds for the expected number of one-arm events on the aforementioned `special cuts' that are formulated in Section~\ref{subsec:one-arm-bounds}. The proofs of these estimates rely upon discrete complex analysis tools that are very specific to the planar Ising model and are postponed until Section~\ref{sec:fermionic-proofs}. The upper bound follows from the known a priori regularity theory of s-holomorphic observables in a relatively straightforward way; see Section~\ref{subsec:regularity-of-F}, The proof of the lower one is much harder. For this, we significantly generalize ideas from~\cite[Section~5]{chelkak-semb} to the case of \emph{non-flat} s-surfaces in Sections~\ref{subsec:sketch-lower-bound}--\ref{subsec:step(vii)}. 

If we worked on~$\mathbb{Z}^2$, these matching upper and lower bounds would be equivalent to the fact that the boundary one-arm exponent equals~$\frac12$, from which one can run the renormalization argument from~\cite{duminil-manolescu-tassion-fractal} verbatim. However, as we already mentioned above, the lack of translation invariance creates another problem related to a technical trick used in~\cite[Definition~5.4]{duminil-manolescu-tassion-fractal}. We replace this trick by finding sufficiently \emph{many} special Dobrushin subdomains of a given discrete domain -- that we introduce in Section~\ref{subsec:bridge-shore} and call `bridge-shore' subdomains -- instead of a \emph{single} Dobrushin subdomain used in~\cite[Section~5]{duminil-manolescu-tassion-fractal}. This makes the whole argument more robust; see the proof of Proposition~\ref{prop:lower-bound-w/o-far} in Section~\ref{subsec:first-moment} for details. 

Since our `bridge-shore' subdomains are cut by `special cuts' rather than by straight lines, the proof of their existence also requires some effort. This is done in Section~\ref{subsec:fjords} using some facts about the mutual geometry of such `special cuts' that are collected in Appendix. The proof of our main result, Theorem~\ref{thm:main-theorem}, is assembled from all these ingredients in Section~\ref{subsec:proof-main-thm} following~\cite{duminil-manolescu-tassion-fractal}.

\subsection*{Acknowledgements} The authors are grateful to Yu Feng and S.C.Park for helpful discussions and remarks.

\subsection*{AI disclosure} We did not use generative AI when working on this paper.}

\section{Special cuts and bridge-shore subdomains}
\label{sec:cuts-and-fjords}
\setcounter{equation}{0}

\subsection{S-surfaces and S-graphs associated with s-embeddings}\label{subsec:s-surfaces} 
As we already mentioned above, the main `geometric' concept that we need in order to adapt the renormalization scheme from~\cite{duminil-manolescu-tassion-fractal} to our setup is the notion of~\emph{special cuts} introduced in~\cite[Section~5]{chelkak-semb}. In turn, its definition relies upon a more fundamental construction: that of \emph{s-surfaces} associated with s-embeddings. For convenience of the reader, we recall the definition of s-surfaces and comment on it in this section, and postpone the definition of special cuts to Section~\ref{subsec:cuts}. In order to clarify these definitions, in these two sections we use the notation~$\cD_\C\subset\C$ for the planar domain covered by~$\cD$. However, we often use the same notation for vertices and their positions in the complex plane if no confusion arises.

In what follows, we use the notation~$G^\bullet$ for the set of primary vertices, $G^\circ$ for the set of dual vertices and~$\diamondsuit(G)$ for the set of quads of an s-embedding~$\cS$. One of the key ideas of~\cite{chelkak-semb} is that the `right' geometry of the Ising model with interactions~\eqref{eq:theta=} is provided \emph{not} by~$\cS$ itself but rather by its lift to a certain piecewise linear surface~$\Theta$ in the Minkowski space~$\R^{2,1}$. In particular, one can check that the quantity~\eqref{eq:theta=} is nothing but the ratio of the~$\|\cdot\|_{2,1}$-distances between the positions of the dual vertices~$v_0^\circ$ and~$v_1^\circ$, and of the primary vertices~$v_0^\bullet$ and~$v_1^\bullet$ on~$\Theta$, which makes this representation invariant under isometries of~$\R^{2,1}$; see~\cite[Remark~1.2]{chelkak-semb}.

Let us now recall the construction of the surface~$\Theta$. We first define a function $\cQ:G^\bullet\cup G^\circ\to\R$ by requiring that
\[
\cQ(v^\bullet)-\cQ(v^\circ)=|\cS(v^\bullet)-\cS(v^\circ)|\ \ \text{for adjacent~$v^\bullet\in G^\bullet$ and~$v^\circ\in G^\circ$.}
\]
(This definition is consistent around each tangential quad.) Moreover, we can consistently extend the definition of~$\cQ$ to centers of quads by requiring that
\[
\cQ(v)-\cQ(z)=\pm r_z\cot \phi_{zv} \ \  \text{for adjacent~$v\in  G^\bullet\cup G^\circ$ and~$z\in\diamondsuit(G)$,}
\]
where~$\phi_{zv}$ is the half-angle of the tangential quad~$z$ at one of its vertices~$v$, and one chooses the `$+$' sign if~$v\in G^\bullet$ and the `$-$' sign if~$v\in G^\circ$. Then, we extend~$\cQ^\bullet$ to a function~$\cQ:\cD_\C\to\R$ by requiring that~$\cQ$ is linear on each of the four triangles that constitute a tangential quad. Finally, we set
\begin{equation}
\label{eq:Theta-def}
\Theta:=\{(z;\cQ(z)),\ z\in\cD_\C\}\subset \C\times\R\cong \R^{2,1}
\end{equation}
and call $\Theta$ the \emph{real s-surface} associated with the s-emebdding~$\cS$. Recall that one can view s-embeddings as a particular case of more general t-embeddings arising in the bipartite dimer model context; see~\cite[Section~8.2]{CLR1} for details. From this perspective, real s-surfaces~$\Theta$ are obtained from t-surfaces of the associated bipartite dimer model by forgetting their microscopic fourth coordinate of~$\R^{2,2}$.

To define the main combinatorial tool of our paper -- special cuts of discrete domains -- it is also convenient to slightly modify the definition of~$\Theta$. It is easily follows from the property~\Unif\ of~$\cS$ that there are constants~$\kappa=\kappa(c_0)<1$ and~$C=C(c_0)>0$ such that~$\Theta$ is a $\kappa$-Lipschitz surface on scales above~$C$, that is, one has~$|\cQ(z')-\cQ(z)|\le\kappa|z-z'|$ for all~$z,z'\in\cD_\C$ such that~$|z'-z|\ge C$. However, this property clearly does not hold on small scales, which creates a technical inconvenience. (Recall that, by definition, the edges of~$\Theta$ connecting a primary and an adjacent dual vertices are light-like.) To avoid this, we introduce a slightly modified version~$\cQ^\bullet:\cD_\C\to\R$ of the function~$\cQ$ as follows:
\begin{itemize}
\item set~$\cQ^\bullet=\cQ$ at all primary vertices, all centers of quads of~$\cS$, and on all segments connecting these centers with primary vertices;
\item note that the property~\Unif\ implies that $\cQ$ is a $\kappa$-Lipschitz function on each polygonal contour~$v_1^\circ\sim z_1\sim  v_2^\circ\sim  ... \sim z_n \sim v_{n+1}^\circ=v_1^\circ$ surrounding a primary vertex of~$\cS$, where~$\kappa<1$ is a constant that depends on~$c_0$ only;
\item let~$\cQ^\bullet\ge\cQ$ be an arbitrary~$\kappa$-Lipschitz extension of~$\cQ$ inside such polygons. 
\end{itemize}
Similarly to~\eqref{eq:Theta-def} we set~$\Theta^\bullet=\{(z,\cQ^\bullet(z)),\,z\in\cD_\C\}$. By construction,~$\Theta^\bullet$ lies in an~$O(1)$ vicinity above~$\Theta$. One can also apply a similar procedure to dual vertices and obtain a modified function~$\cQ^\circ\le \cQ$ and the corresponding surface $\Theta^\circ=\{(z,\cQ^\circ(z)),\,z\in\cD\}$ that lies in a~$O(1)$ vicinity below~$\Theta$.

Below we use the notation~$\dist(z',z)=|z'-z|$ for the usual \emph{Euclidean} distance between two points~$z,z'\in\cD$. In several situations it is also useful to work with a pseudo-distance
\[
\|z'-z\|_{2,1}\,\:=\,\big(|z'-z|^2-|\cQ^\bullet(z')-\cQ^\bullet(z)|^2\big)^{\frac12},\ \ z,z'\in\cD_\C,
\]
which is nothing but the Minkowski~$\|\cdot\|_{2,1}$-distance between the lifts of the points~$z,z'$ onto the modified s-surface~$\Theta^\bullet$. It is easy to see that
\begin{equation}
\label{eq:dist-2,1-comparable}
\|z'-z\|_{2,1}\,\le\,\dist(z,z')\,\le\,(1-\kappa^2)^{-1/2}\|z'-z\|_{2,1}
\end{equation}
for all~$z,z'\in\cD_\C$ as the modified s-surface~$\Theta^\bullet$ is $\kappa$-Lipschitz on \emph{all} scales.

It is also useful to recall the notion of \emph{S-graphs} associated with an s-embedding~$\cS$ that appeared in~\cite{chelkak-semb}. Given~$\alpha\in\C$, $|\alpha|=1$, the position of a vertex~$v\in G^\bullet\cup G^\circ$ in the corresponding S-graph is given by~$\cS(v)-i\alpha\cQ(v)$. (Note that we use a nonstandard convention for the value of~$\alpha$ as compared to~\cite{chelkak-semb,CLR1}.) One can view this as the projection of the s-surface~$\Theta$ onto the horizontal plane along the light-like direction~$(i\alpha;1)\in\C\times\R\cong\R^{2,1}$. The images of tangential quads~$\cS(v_0^\bullet)\cS(v_0^\circ)\cS(v_1^\bullet)\cS(v_1^\circ)$ in an S-graph are non-convex quadrilaterals. E.g., if~$\alpha=1$, then it is easy to see that both dual vertices lie above both primary ones since
\[
\Im(\cS(v^\circ))-\cQ(v^\circ)\,\ge\,\Im(\cS(v^\bullet))-\cQ(v^\bullet)\ \ \text{for}\ \  v^\circ\sim v^\bullet. 
\]
If $\cS(v^\bullet)-\cS(v^\circ)\in i\alpha\R_+$, then the edge~$(v^\circ v^\bullet)$ of~$\diamondsuit(G)$ degenerates in the corresponding S-graph. For simplicity, we will always assume that such degeneracies do not happen, which can be achieved by a small perturbation of the direction~$\alpha$.

\begin{remark}
\label{rem:duality} The Ising model on s-embeddings is self-dual. However, the definition of s-surfaces and S-graphs is not fully symmetric with respect to swapping the roles of primary and dual vertices. Namely, this results in changing the sign of the function~$\cQ$ (that is, to reflecting~$\Theta$ with respect to the horizontal plane) and hence to replacing $\alpha$ with~$-\alpha$ in the construction of S-graphs~$\cS-i\alpha\cQ$.
\end{remark}

\begin{figure}
\begin{minipage}{0.5\textwidth}
\includegraphics[clip, trim=4.5cm 16.5cm 7.7cm 4.5cm, width=\textwidth]{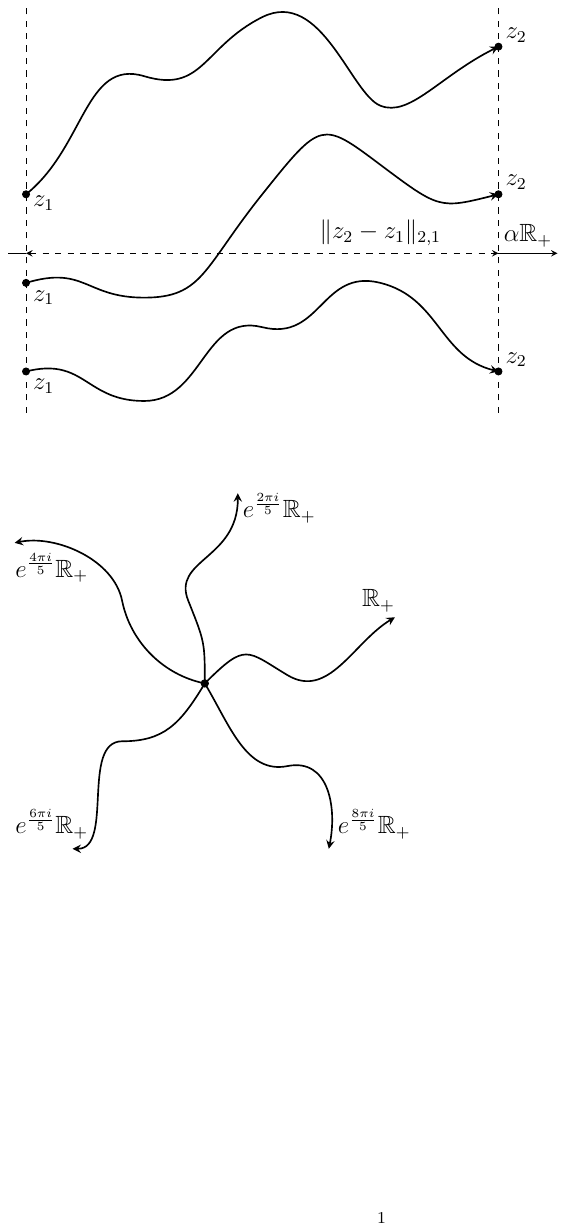}
\end{minipage}
\hskip 0.05\textwidth
\begin{minipage}{0.44\textwidth}
\includegraphics[clip, trim=4.6cm 9.2cm 9.2cm 12.65cm, width=\textwidth]{strongRSW_fig_cuts_geom.pdf}
\end{minipage}
\caption{{An illustration of the mutual geometry of `special cuts'. We need to use such cuts instead of straight lines due to the `positivity'
of fermionic observables on them (see Proposition~\ref{prop:observables-boundary}(iii)). 
\textsc{Left:} A possible configuration of three `special cuts' going in the \emph{same} direction $\alpha\R_+$ at different heights. The~$\|\cdot\|_{2,1}$ distances between the lifts of points lying on the same cut onto~$\Theta^\bullet\subset\R^{2,1}$ are equal to the distance between their projections onto the line~$\alpha\R$. 
\textsc{Right:} `Special cuts' emanating from the same point do not intersect each other and form a `wiggled' version of the polar coordinate system; see Lemma~\ref{lem:polar-coordinates} for a quantitative estimate.}
\label{fig:cuts-geom}}
\end{figure}

\subsection{Special cuts} \label{subsec:cuts}

The following definition was introduced in~\cite[Section~5.2]{chelkak-semb} in a slightly different and technically more complicated form.

\begin{definition}\label{def:s-cut}
Let~$\alpha\in\C$, $|\alpha|=1$, be such that the S-graph~$\cS-i\alpha\cQ$ is non-degenerate and~$\mu\in\R$. Denote
\[
\cD_\C^{[\alpha,\mu+]}:=\{z\in\cD_\C:\Im[\overline\alpha z]>\cQ^\bullet(z)+\mu\}\,.
\]
We call the curve~$\partial\cD_\C^{[\alpha,\mu+]}$ the `special cut' of~$\cD_\C$ going in the direction~$\alpha$ at height~$\mu$. Further, the corresponding special cut of the discrete domain~$\cD$ is a simple path on the graph~$G^\circ\cup\diamondsuit(G)$ that is the boundary of the main connected component of the subset of primary vertices of~$\cD$ that lie in $\cD_\C^{[\alpha,\mu+]}$. We use the same name `special cut' for subsegments of these curves and discrete paths. 
\end{definition}
\begin{remark} 
A useful way of thinking about special cuts is to view them as appropriate discretizations of \emph{straight lines} $\alpha (t+i\mu)$, $t\in\R$, \emph{on the S-graph}~$\cS-i\alpha \cQ$. Note that one should consider different charts~$z-i\alpha Q^\bullet(z)$, $z\in\cD_\C$, for different~$\alpha$. {The mutual geometry of these cuts in the original complex plane is less straightforward to control but they still have some nice properties that allow us to work with them as if they were straight lines; see Lemma~\ref{lem:polar-coordinates} below and Fig.~\ref{fig:cuts-geom} for an illustration.}
\end{remark}

It is easy to see that
\[
\|z'-z\|_{2,1}\,=\,\big|\Re [\overline\alpha z']-\Re [\overline\alpha z]\big|\ \ \text{for all $z,z'\in \gamma:=\partial\cD_\C^{[\alpha,\mu+]}$.}
\]
Moreover, since~$\cQ^\bullet:\cD_\C\to\R$ is a $\kappa$-Lipschitz function with~$\kappa<1$, the curve $\gamma$ is the graph of a~$\kappa(1-\kappa^2)^{-1/2}$-Lipschitz function $y=y(x)$, where $y=\Im [\overline\alpha z]$ and $x=\Re [\overline \alpha z]$. If~$(ab)\subset\gamma$ is a special cut, then we typically use the~$\|\cdot\|_{2,1}$-distance to parameterize~$\gamma$ and call the quantity~$\|a-b\|_{2,1}$ the \emph{$\|\cdot\|_{2,1}$-length} of~$\gamma$.

We also need a notion of a special cut going from a point to another point rather than in a given direction. This notion is justified by the following lemma.
\begin{lemma}
\label{lem:s-cut-from-a-to-b} For each distinct points~$z_0,z\in\cD_\C$ there is a unique special cut~$\gamma$ of~$\cD_\C$ such that both~$z_0,z\in\gamma$ and~$\Re[\overline\alpha z_0]<\Re[\overline\alpha z]$, where~$\alpha$ is the direction of~$\gamma$.
\end{lemma}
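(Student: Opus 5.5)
Write $w:=z-z_0$ and $q:=\cQ^\bullet(z)-\cQ^\bullet(z_0)$. By~\eqref{eq:dist-2,1-comparable} we have $|q|\le\kappa|w|<|w|$. A special cut in direction~$\alpha$ at height~$\mu$ is the level set $\{\Im[\overline\alpha\zeta]-\cQ^\bullet(\zeta)=\mu\}$, i.e.\ the boundary of $\cD_\C^{[\alpha,\mu+]}$. This set is a single graph $y=y(x)$ over the $x=\Re[\overline\alpha\zeta]$ axis: the function $y\mapsto y-\cQ^\bullet$ is strictly increasing along vertical lines in the rotated coordinates, because $\cQ^\bullet$ is $\kappa$-Lipschitz with $\kappa<1$. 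Hence, for a given direction~$\alpha$, the point~$z_0$ lies on exactly one cut, namely the one at height $\mu=\Im[\overline\alpha z_0]-\cQ^\bullet(z_0)$. (We work in the whole plane, or treat the domain boundary as in the definition; for existence and uniqueness only the plane level sets matter.) The problem therefore reduces to finding the directions~$\alpha$ for which $z$ lies on the same level set. This happens if and only if
\[
\Im[\overline\alpha w]=q,
\]
and the extra requirement $\Re[\overline\alpha z_0]<\Re[\overline\alpha z]$ becomes $\Re[\overline\alpha w]>0$.

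Next, write $\overline\alpha w=|w|e^{i\psi}$. The conditions become $\sin\psi=q/|w|\in(-\kappa,\kappa)$ together with $\cos\psi>0$. Since $|q/|w||<1$, there is exactly one such $\psi\in(-\pi/2,\pi/2)$, namely $\psi=\arcsin(q/|w|)$. The other solution of $\sin\psi=q/|w|$ is $\pi-\psi$, which has $\cos<0$; it corresponds to the same cut traversed in the opposite orientation and is excluded by the ordering condition. This gives a unique $\alpha=e^{i\psi}\,w/|w|$, and hence a unique $\mu$. As a byproduct, $\|z-z_0\|_{2,1}=|w|\cos\psi=\Re[\overline\alpha w]$, consistent with the parametrization identity stated before the lemma.

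The main obstacle is the standing requirement in Definition~\ref{def:s-cut} that the S-graph $\cS-i\alpha\cQ$ be non-degenerate. The computed $\alpha$ could, in principle, be one of the finitely many locally exceptional directions. I would handle this in one of two ways:
\begin{itemize}
\item Note that the continuous curve $\partial\cD_\C^{[\alpha,\mu+]}$ is perfectly well defined for every $\alpha$, since its definition uses $\cQ^\bullet$, which is genuinely $\kappa$-Lipschitz. Non-degeneracy only matters for the discrete cut, so the lemma as stated for $\cD_\C$ needs no genericity.
\item Alternatively, argue that degenerate directions form a discrete set and that the statement is applied to generic pairs $(z_0,z)$.
\end{itemize}
I would adopt the first reading and remark on it.
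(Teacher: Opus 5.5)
Your argument is correct and is essentially the paper's proof. Both reduce everything to the equation $\Im[\overline\alpha(z-z_0)]=\cQ^\bullet(z)-\cQ^\bullet(z_0)$, which has exactly two unimodular solutions because $|\cQ^\bullet(z)-\cQ^\bullet(z_0)|\le\kappa|z-z_0|<|z-z_0|$, and the condition $\Re[\overline\alpha(z-z_0)]>0$ selects one of them.

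Two side remarks need correcting, though neither affects the logic. First, $\overline\alpha w=|w|e^{i\psi}$ gives $\alpha=e^{-i\psi}w/|w|$, not $e^{i\psi}w/|w|$. Second, the discarded solution is not the same cut traversed backwards. It is the special cut going from $z$ to $z_0$, which for non-linear $\cQ^\bullet$ is in general a different curve (see Remark~\ref{rem:s-cut-from-a-to-b}).
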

\begin{proof}
Recall that~$\cQ^\bullet(z)$ is a $\kappa$-Lipschitz, $\kappa<1$, function of~$z\in\cD$. The condition that there is a special cut going in the direction~$\alpha$ and passing through both~$z_0$ and~$z$ can be written as
\[
\Im[\overline\alpha (z-z_0)]=\cQ^\bullet(z)-\cQ^\bullet(z_0).
\]
This equation has two solutions~$\alpha\in\C$, $|\alpha|=1$, and exactly one of them satisfies the condition~$\Re[\overline\alpha (z-z_0)]>0$. (The other solution is the direction of the special cut going from~$z$ to~$z_0$.) 
\end{proof}
\begin{remark}
\label{rem:s-cut-from-a-to-b} Let us emphasize that, in general,~$\alpha\ne \exp[i\arg(z-z_0)]$ and that the special cuts from~$z_0$ to~$z$ and from~$z$ to~$z_0$ do \emph{not} coincide. Moreover, these two special cuts can deviate by the distance of order~$\|z-z_0\|_{2,1}$ from each other.
\end{remark}
It follows from Lemma~\ref{lem:s-cut-from-a-to-b} that special cuts of~$\cD_\C$ emanating from the same point~$z_0$ in directions~$e^{i\phi}$, $\phi\in\R/2\pi\mathbb{Z}$, are disjoint. We provide a quantitative version of this statement in Lemma~\ref{lem:polar-coordinates} and Corollary~\ref{cor:cuts-do-not-screen-each-other}. This allows us to view such a family of special cuts as a `wiggled version' of the usual straight rays emanating from~$z_0$.

In our paper we also need the notion of \emph{dual special cuts,} which are nothing but special cuts for the dual Ising model on~$G^\circ$. Their definition is fully similar to Definition~\ref{def:s-cut}: we denote 
\[
\cD_\C^{[\alpha,\mu-]}:=\{z\in\cD_\C:\Im[\overline\alpha z]<\cQ^\circ(z)+\mu\}\,.
\]
and call~$\partial\cD_\C^{[\alpha,\mu-]}$ the dual special cut of~$\cD_\C$ going in the direction~$-\alpha$ at height~$-\mu$. The corresponding special cut of~$\cD$ is a simple path on the graph~$G^\bullet\cup\diamondsuit(G)$ that bounds the main connected component of the subset~$G^\circ\cap\cD_\C^{[\alpha,\mu-]}$ of dual vertices. We use the notation~$(ba)^*$ for a dual special cut going from~$b$ to~$a$.

Recall that~$\cQ^\circ\le\cQ\le\cQ^\bullet$. Therefore, the special cut~$\partial\cD_\C^{[\alpha,\mu+]}$ and the dual special cut~$\partial\cD_\C^{[\alpha,\mu-]}$ do not intersect and stay~$O(1)$ close to each other. Now assume, without loss of generality, that a discrete special cut~$(ab)$ goes in the positive direction. It follows from the construction that each primary vertices of~$\cD$ lies either above~$(ab)$ or below or at the corresponding dual special cut~$(ba)^*$, except maybe few exceptions in a~$O(1)$ vicinity of these cuts. These possible exceptions are \emph{not} relevant for our arguments. Because of this, in what follows we slightly oversimplify the picture and think about dual special cuts as primary special cuts that are tracked in the opposite direction.

Also, in what follows we do not pay attention to a subtle difference between continuous curves~$\partial\cD_\C^{[\alpha,\mu]}$ -- that we called special cuts of~$\cD_\C$ above -- and the corresponding special cuts of a discrete domain~$\cD$. The latter discrete paths stay at a~$O(1)$ distance from the former continuous curves and we will only be using geometric properties of these curves on larger scales, which are then inherited by the corresponding discrete paths.

\subsection{Expected number of boundary one-arm events on special cuts} \label{subsec:one-arm-bounds}
 Recall that a \emph{Dobrushin domain}~$\Omega$ is a discrete simply connected domain with two marked boundary points $a,b$ such that the Ising model has free boundary conditions on the boundary arc~$(ab)_{\Omega}$ and wired boundary conditions on the complementary arc~$(ba)_{\Omega}$. Combinatorially, the arc~$(ab)_{\Omega}$ is composed of half-quads of the form $(v_-^\circ(z) v^\bullet(z) v_+^\circ(z))$ instead of full quads, the arc~$(ba)_{\Omega}$ consists of half-quads $(v^\bullet_-(z)v^\circ(z)v^\bullet_+(z))$, and~$a,b$ can be viewed as edges of the graph~$\diamondsuit(G)$. 

The importance of special cuts for the analysis of the Ising model on s-embeddings is justified by an observation made in~\cite[Section~5.3]{chelkak-semb} that boundary values of the so-called fermionic observables in Dobrushin domains satisfy a certain positivity property if~$(ab)$ is a special cut. This observation allows one to use the discrete complex analysis machinery developed in~\cite{chelkak-semb} in order to estimate the expected number of boundary one-arm events on such {boundary arcs~$(ab)_\Omega=(ab)$.} It is worth mentioning that we are not aware of similar techniques that could be applied to discretizations of \emph{straight} cuts in our setup.

We now formulate two results that can be obtained by these discrete complex analysis techniques and postpone their (rather technical, especially that of Proposition~\ref{prop:lower-bound}) proofs until Section~\ref{sec:fermionic-proofs}. Let us nevertheless mention here that our arguments contain several simplifications as compared to~\cite{chelkak-semb}, partly inspired by~\cite{mahfouf-phd}. In particular, this allows us to go beyond the setup of `almost horizontal' s-surfaces considered in~\cite{chelkak-semb}.

We denote by~$\mathbb{P}^{0/1}_\Omega$ the probability measure on configurations in a Dobrushin domain~$\Omega$, where the superscript~$0/1$ emphasizes boundary conditions: free on the boundary arc~$(ab)_{\Omega}$ and wired on~$(ba)_{\Omega}$.

\begin{proposition}\label{prop:upper-bound} Let~$\Omega$ be a Dobrushin domain such that its free boundary arc $(ab)_\Omega=(ab)$ is a special cut of { $\|\cdot\|_{2,1}$-}length~$L$. {If a constant $\eta>0$} is such that 
\[
\dist(z,(ba)_{\Omega})\ge {\eta}\cdot \min\{\|z-a\|_{2,1},\|z-b\|_{2,1}\}\ \ \text{for all~$z\in(ab)$,}
\]
then
\[
\sum_{z\in (ab)}\mathbb{P}^{0/1}_\Omega(v^\bullet(z)\leftrightarrow (ba)_{\Omega}) = O({\eta^{-\frac12}}L^{\frac12}),
\]
where the implicit constant in this upper bound {depends on~$c_0$ only.}
\end{proposition}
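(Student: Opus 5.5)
Throughout, I work with the Kadanoff--Ceva fermionic observable $F=F_\Omega$ of the Dobrushin domain $(\Omega;a,b)$, normalized at the wired arc. By the standard identification (as in \cite[Section~5.3]{chelkak-semb}), on the free arc $(ab)$ its boundary values satisfy $|F(z)|^2\asymp \mathbb{P}^{0/1}_\Omega(v^\bullet(z)\leftrightarrow(ba)_\Omega)$, up to explicit factors depending only on $c_0$. Because $(ab)$ is a special cut, these values are moreover `positive' in the sense of Proposition~\ref{prop:observables-boundary}(iii): the quantity $F(z)^2\,\delta z$ has a fixed sign along the cut. The plan is to bound the $L^2$-type boundary sum $\sum_{z\in(ab)}|F(z)|^2$ using the interior a priori regularity of s-holomorphic functions. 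We exploit that the primitive $H=\int\Im[F^2dz]$ (the `$H$-function' of \cite{chelkak-semb}) is bounded by $O(1)$. Along a special cut parameterized by the $\|\cdot\|_{2,1}$-length, $H$ is monotone, so the increments of $H$ along $(ab)$ are exactly the local one-arm probabilities, up to constants.

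\textbf{Step 1: bound away from the endpoints.} Split $(ab)$ into dyadic pieces $I_k$ according to $\|z-a\|_{2,1}\asymp 2^k$, and symmetrically near $b$. For $z\in I_k$, the hypothesis gives $\dist(z,(ba)_\Omega)\ge\eta 2^k$. The free arc is a special cut, hence a Lipschitz graph, and $H$ is bounded with the correct sign (Dirichlet-type condition) near it. The a priori regularity of Section~\ref{subsec:regularity-of-F} then yields $|F(z)|^2\lesssim \osc H/ d$ on scale $d\asymp\eta 2^k$. More precisely, I would use the sub/super-harmonicity of $H$ on the S-graphs, which holds uniformly under \Unif, together with a boundary Harnack/Beurling-type estimate in the half-disc of radius $\eta2^k$ around $z$ cut by the special cut. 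This should give $|F(z)|^2=O((\eta 2^k)^{-1})$.

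Summing this pointwise bound over the $\asymp 2^k$ points of $I_k$ gives only $O(\eta^{-1})$ per scale, which is too crude. I would therefore improve it to $|F(z)|^2\lesssim (\eta\cdot\|z-a\|_{2,1})^{-1/2}L^{-1/2}$ type bounds. The intended mechanism is that $H$ grows like the square root of the distance to the endpoint where free and wired arcs meet. Summing over the cut, $\sum_k 2^k\cdot(\eta 2^k L)^{-1/2}$ is dominated by $2^k\asymp L$ and gives $O(\eta^{-1/2}L^{1/2})$.

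\textbf{Step 2: the main obstacle.} Making the square-root behaviour rigorous is the hard part. I would first try a direct monotonicity argument. Since $H$ is monotone along $(ab)$, the total sum satisfies $\sum|F|^2\asymp H(b)-H(a)$ after rescaling by the boundary normalization; the rescaled version should be $H$ measured against $\|\cdot\|_{2,1}$-length. Compare $H$ with the explicit harmonic function $\Re\sqrt{\cdot}$ on the S-graph chart $z-i\alpha\cQ^\bullet(z)$, in which the cut is a straight line. By Lipschitzness, the region within $\eta$-cone distance of the cut maps onto a sector of definite opening in this chart. The maximum principle for the sub/superharmonic $H$ on S-graphs should then bound $H$ in $O(\eta^{-1/2})$ multiples of $\sqrt{t/L}$. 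Here $t$ is the $\|\cdot\|_{2,1}$-distance to the endpoint, and the factor $\eta^{-1/2}$ arises from the opening of the cone, which is where I expect the technical difficulty to lie.

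Finally, converting the $H$ increment back into the sum of probabilities produces the factor $L$ from the normalization. This yields $O(\eta^{-1/2}L^{1/2})$.
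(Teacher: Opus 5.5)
Your argument rests on a misidentification of what the observable records on the free arc, and the main step fails because of it. By Proposition~\ref{prop:observables-boundary}(i), $|F(z)|\asymp\delta^{-1/2}\,\mathbb{P}^{0/1}_\Omega[v^\bullet(z)\leftrightarrow(ba)_\Omega]$ on a boundary half-quad. So the one-arm probability is \emph{linear} in $F$, not $\asymp|F|^2$. The quadratic object is $H$, but only through its jump \emph{across} the arc: $H=0$ at the boundary dual vertices $v^\circ_\pm(z)$, while $H(v^\bullet(z))\asymp\mathbb{P}^{0/1}_\Omega[v^\bullet(z)\leftrightarrow(ba)_\Omega]^2$.

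\emph{Along} $(ab)$ one has $H\equiv0$ by Proposition~\ref{prop:closed-forms}(iii). Hence $H$ is not ``monotone along the cut with increments equal to the one-arm probabilities'', and ``$\sum|F|^2\asymp H(b)-H(a)$'' has no content; Step~2 compares nothing. The positivity on special cuts (Proposition~\ref{prop:observables-boundary}(iii)) concerns $F$ itself and the increments of the \emph{linear} primitive $G$ of $F\,d\cS+i\overline F\,d\cQ$, not $F^2\,dz$ or $H$.

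Step~1 has two further problems. First, a pointwise bound at points \emph{of} $(ab)$ does not follow from~\eqref{eq:F-via-HF-Harnack}, which controls $F$ only away from $\partial\Omega$. A Beurling-type comparison with barriers such as $\Re\sqrt{\cdot}$ would need $H$ to be subharmonic for an operator under which those barriers are superharmonic. Only a maximum principle for $H$ is available (Proposition~\ref{prop:closed-forms}(ii)), and that alone does not permit such comparisons. Second, the arithmetic is off: $\sum_{2^k\le L}2^k(\eta2^kL)^{-1/2}=O(\eta^{-1/2})$, a bound uniform in $L$. With your identification, that would contradict Proposition~\ref{prop:lower-bound}. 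Note also that your first bound $|F(z)|^2=O((\eta2^k)^{-1})$ was not ``too crude''. Since in fact $|F|\asymp\mathbb{P}$ in lattice units, it would give $\sum_k2^k(\eta2^k)^{-1/2}=O(\eta^{-1/2}L^{1/2})$. The trouble is that you cannot prove it at boundary points.

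The missing idea is to avoid boundary estimates entirely by integrating the linear closed form. By Proposition~\ref{prop:observables-boundary}(ii),(iii), in lattice units each probability $\mathbb{P}^{0/1}_\Omega[v^\bullet(z)\leftrightarrow(ba)_\Omega]$ is comparable to $\Re[G(v^\circ_+)-G(v^\circ_-)]$. All these increments have positive real part comparable to their modulus. This positivity is what turns the sum of boundary terms into the real part of a \emph{single} contour integral $\int_{(ab)}(F\,d\cS+i\overline F\,d\cQ)$, so that bounding one integral bounds the whole sum. By closedness, you may move the contour into $\Omega$. Rotate so that the cut goes in the positive direction and set $t(z):=\min\{\|z-a\|_{2,1},\|z-b\|_{2,1}\}$; use the curve $z^\sharp=z+\tfrac12 i\eta\, t(z)$. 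The hypothesis on $\eta$, together with the Lipschitz property of the cut, serves only to guarantee $\dist(z^\sharp,\partial\Omega)\gtrsim\eta\, t(z)$. On that curve the interior bound $|F|\lesssim\dist(\cdot,\partial\Omega)^{-1/2}$ from~\eqref{eq:F-via-HF-Harnack} applies; it is a consequence of $0\le H\le1$. The resulting integral $\int_0^L(\eta t)^{-1/2}\,dt=O(\eta^{-1/2}L^{1/2})$ converges at the endpoints, which is exactly the proposition.
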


\begin{proof} See Section~\ref{subsec:regularity-of-F}. 
\end{proof}

\begin{remark}
We will use this proposition in two different setups. In the first, the boundary of the Dobrushin domain will consist of the special cut~$\gamma=(ab)$, two segments perpendicular to the \emph{direction} of~$\gamma$ having Euclidean length~$\|b-a\|_{2,1}$, and a translation of~$\gamma$. Since~$\gamma$ is a graph of a $\kappa(1-\kappa^2)^{-1/2}$-Lipschitz function, for such a domain the assumption of Proposition~\ref{prop:upper-bound} holds with~$\eta>0$ that depends on~$c_0$ only. The second setup is described in the forthcoming Section~\ref{subsec:bridge-shore}. Due to Lemma~\ref{lem:ramps-not-screened} given below, in this case the constant~$\eta>0$ also depends on~$c_0$ only. Thus, in principle one can formulate Proposition~\ref{prop:upper-bound} without mentioning~$\eta$ at all. We decided to include this parameter in the statement so as to make the dependence of many implicit constants from each other more transparent.
\end{remark}

\begin{proposition}\label{prop:lower-bound} Let~$\Omega$ be a Dobrushin domain such that its free boundary arc $(ab)_\Omega=(ab)$ is a special cut of { $\|\cdot\|_{2,1}$-}length~$L$. {Assume also that~$\Omega$ does not contain exceptional vertices of~$G^\bullet$ that lie below the special cut~$(ab)$ but above the dual special cut~$(ba)^*$ (see a discussion in Section~\ref{subsec:cuts} above).} Then,
\begin{equation}
\label{eq:prop-lower-bound}
\sum_{z\in (ab)}\mathbb{P}^{0/1}_\Omega(v^\bullet(z)\leftrightarrow (ba)_{\Omega}) \ge \mathrm{cst}\cdot L^{\frac12},
\end{equation}
where~$\mathrm{cst}>0$ is a constant that depends on~$c_0$ only.
\end{proposition}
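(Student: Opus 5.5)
We would prove Proposition~\ref{prop:lower-bound} through the fermionic observable of the Dobrushin domain~$\Omega$. For a point~$z$ on the free arc, the one-arm probability $\mathbb{P}^{0/1}_\Omega(v^\bullet(z)\leftrightarrow(ba)_\Omega)$ equals, up to a factor bounded above and below by constants depending on~$c_0$, the squared modulus of the boundary value of the s-holomorphic observable $F_\Omega$ at~$z$. Equivalently, it is comparable to the increment of the real-valued primitive $H_\Omega=\int\Re[\overline{\eta}F^2]$ along the half-quad at~$z$. The positivity property of special cuts, noted above from \cite[Section~5.3]{chelkak-semb}, says that these boundary values have a fixed sign along~$(ab)$. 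Hence the sum in~\eqref{eq:prop-lower-bound} is comparable to the total increment of a suitable primitive~$H$ along~$(ab)$. More precisely, we would use the variant of $H$ built from the S-graph chart $\cS-i\alpha\cQ$, in which the cut~$(ab)$ becomes a straight segment. It therefore suffices to show that $H$ varies by at least $\mathrm{cst}\cdot L^{1/2}$ over the middle part of the cut, for instance between the points at $\|\cdot\|_{2,1}$-distance $L/3$ and $2L/3$ from~$a$.

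The plan has four steps. First, set up the boundary values. With the normalization used in the paper, $H$ is constant, equal to~$0$, on the wired arc~$(ba)_\Omega$ and is monotone along the free arc. Its increments on~$(ab)$ are comparable to the one-arm probabilities, and the comparability constants depend only on~$c_0$, using \Unif\ and the assumption that there are no exceptional vertices between the cut and the dual cut. Second, reduce to a local statement. By monotonicity and the domain Markov property, the arm probabilities in~$\Omega$ dominate those in a smaller Dobrushin domain. We would take $\Omega'\subset\Omega$ consisting of the part of~$\Omega$ above the middle third of the cut inside a box of size~$\asymp L$, with wired boundary conditions on the rest of its boundary. Comparison between boundary conditions (FKG) makes the one-arm events in~$\Omega'$ only less likely. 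This leaves a uniform model domain: a special cut together with a roughly square region above it.

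Third, get the lower bound from a nondegeneracy estimate on the scale~$L$. The a priori regularity theory of s-holomorphic functions under \Unif\ (Harnack-type estimates for~$H$ and Lipschitz bounds for~$F$ away from the boundary) implies that if the increment of $H$ along the middle of the cut were much smaller than $L^{1/2}$, then $F$ would be small in the bulk at distance $\asymp L$ from the cut. This would contradict the normalization coming from the jump of~$H$ at the marked points, which is forced by the Dobrushin boundary conditions. To make this quantitative we would compare $H$ with explicit sub- and superharmonic barriers on the S-graph near the straight-line image of the cut. This is the generalization of \cite[Section~5]{chelkak-semb} to non-flat s-surfaces mentioned in the introduction. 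Fourth, check the scaling. The observable scales like $L^{-1/2}$, the cut has about $L$ boundary points, and so the sum is of order $L\cdot L^{-1/2}=L^{1/2}$. The matching upper bound is Proposition~\ref{prop:upper-bound}.

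I expect the main obstacle to be the third step, the near-boundary analysis on a special cut when $\Theta$ is far from flat. There the Laplacian-type operators acting on~$H$ are only sub/superharmonic on the S-graph, and one needs a Beurling- or Harnack-type estimate that holds uniformly up to the special cut. I would first try to construct explicit test functions, such as $\Im$ of a square root in the S-graph coordinates adapted to the straight image of the cut. Their discrete Laplacians would be controlled using \Unif, and a maximum principle would then show that $H$ must grow by at least $\mathrm{cst}\cdot L^{1/2}$ along the middle of the cut.
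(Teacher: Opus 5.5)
Your proposal has genuine gaps, starting with the bookkeeping. The one-arm probabilities are \emph{linear} in the observable, not quadratic. By Proposition~\ref{prop:observables-boundary}, $|F^\delta(z)|\asymp\delta^{-1/2}\mathbb{P}^{0/1}_{\Omega^\delta}[v^\bullet(z)\leftrightarrow(ba)_{\Omega^\delta}]$. Moreover, $\delta^{1/2}$ times these probabilities are comparable to the increments along $(ab)$ of the primitive $G$ of the closed form $F\,d\cS+i\overline{F}\,d\cQ$, and on a special cut these increments have real part comparable to their modulus. The quadratic primitive $H$ is \emph{constant} on the free arc ($H=0$ there and $H=1$ on the wired arc), so it has no increments along $(ab)$ to sum. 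The correct reformulation is a lower bound on $\osc_{(ab)}G$.

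Your Step~2 also runs the comparison in the wrong direction. If $\Omega'\subset\Omega$ is wired on $\partial\Omega'$ minus the middle third of the cut, every path from $v$ to $(ba)_\Omega$ must first hit this new wired arc. So $\mathbb{P}^{0/1}_{\Omega'}$ gives an \emph{upper} bound on the arm probabilities in $\Omega$. The paper instead pushes the wired arc behind the cut. It uses the domain $\Xi$: the unit disc in the chart $\cS-i\cQ$, slit along $(ab)$, free on the upper side of the slit and on the outer boundary, and wired on the lower side, taken to be the dual special cut $(ba)^*$. Monotonicity then genuinely decreases every term, and the geometry stays at scale $L$ however far $(ba)_\Omega$ wanders. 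One also checks that $F$ has no sign monodromy and $H$ no additive monodromy around the slit. Hence $H$ is single-valued, equals $0$ and $1$ on the two sides of the slit, and satisfies $0\le H\le 1$.

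The decisive gap is Step~3. The claim that small variation on the cut forces $F$ to be small in the bulk is a boundary-uniqueness statement. The a priori estimates you invoke ($|F|=O(d^{-1/2})$, H\"older bounds) are upper bounds and cannot propagate smallness inward from a boundary arc. Explicit barriers such as $\Im\sqrt{w}$ are not justified either. On a non-flat surface, the limit $g$ solves only an $\R$-linear Beltrami equation in $w=z-i\vartheta(z)$, with measurable coefficients coming from the merely Lipschitz $\vartheta$. You also name no operator for which $H$ is a sub- or supersolution.

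The paper obtains this step only qualitatively, by contradiction and compactness; this also gives uniformity over all such surfaces for free.
\begin{itemize}
\item It rescales to a cut of $\|\cdot\|_{2,1}$-length $1$ and extracts limits $F^\delta\to f$, $G^\delta\to g$, $H^\delta\to h$.
\item The limit $g$ is $\frac12$-H\"older up to $\gamma^+$ and constant there.
\item Since $g$ is quasiregular in $w$ and lies in $W^{1,2-\varepsilon}$, Schwarz reflection, the upgrade to $W^{1,2}$ and Sto\"{\i}low factorization give $g\equiv\mathrm{const}$. Hence $f\equiv 0$ and $h\equiv\mathrm{const}$.
\end{itemize}

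Your instinct that the contradiction comes from the $0/1$ normalization of $H$ is right, but it is not automatic: integrating $|F|^2=O(d^{-1})$ from the slit diverges logarithmically. The paper needs a separate near-boundary argument.
\begin{itemize}
\item $\Re F^\delta$ is a martingale for the backward walk on $\cS^\delta-i\cQ^\delta$ stopped on the cut. On the cut it is positive, with $\delta\sum\Re F^\delta=O(1)$ by Proposition~\ref{prop:upper-bound}.
\item Together with occupation-time bounds, this gives $\delta^2\sum_{\cR(2y)\smallsetminus\cR(y)}|\Re F^\delta|=O(y)$.
\item Integrating $dH$ along upward paths, where $\Re[F^2]$ may be replaced by $(\Re F)^2$, shows that the averages of $H^\delta$ over $\cR(2y)\smallsetminus\cR(y)$ are $O(y^{1/2})$. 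This forces $h=0$.
\item The dual analysis on $\gamma^-$ forces $h=1$, which is the contradiction.
\end{itemize}
Neither this compactness/uniqueness mechanism nor the linear martingale estimate appears in your plan.
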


\begin{proof} See Sections~\ref{subsec:sketch-lower-bound}--\ref{subsec:step(vii)}. 
\end{proof}

\begin{remark}
\label{rem:lower-bound} In fact, we will use Proposition~\ref{prop:lower-bound} in the \emph{dual} setup, that is, for Dobrushin domains with wired boundary arcs~$(ba)_\Omega=(ba)^*$ being \emph{dual} special cuts. (See Section~\ref{subsec:lower-bound-w/o-legs} where this dual version of Proposition~\ref{prop:lower-bound} is used to prove Proposition~\ref{prop:lower-bound-w/o-legs} given below.) We chose to formulate it in terms of the primary model so as to put it in the context similar to Proposition~\ref{prop:upper-bound} and~\cite[Section~5]{chelkak-semb}.
\end{remark}

\subsection{`Bridge-shore' subdomains}\label{subsec:bridge-shore} 
We are now ready to introduce the main geometric definition of our paper: special subdomains of a simply connected domain~$\cD$ that we call \emph{`bridge-shore' subdomains}, which play the same role as that of `corner' domains in~\cite[Section~5.3]{duminil-manolescu-tassion-fractal}. A `bridge-shore' subdomain~$\Omega$ is defined by a pair of points~$a,b\in\cD$ satisfying several conditions listed in Definition~\ref{def:bridge} below. In what follows we will always view~$\Omega$ as a Dobrushin domain, with \emph{wired} boundary conditions on the `bridge' part of its boundary and free boundary conditions on the complementary arc, which consists of two `ramps' $(aa_0)$ and~$(b_0b)$ and a `shore', by which we mean the relevant part of~$\partial\cD$. See {Fig.~\ref{fig:bridge-shore}} for an illustration.

\begin{figure}
\centering 
\includegraphics[clip,trim=4.7cm 17cm 8.8cm 4.8cm,width=0.56\textwidth]{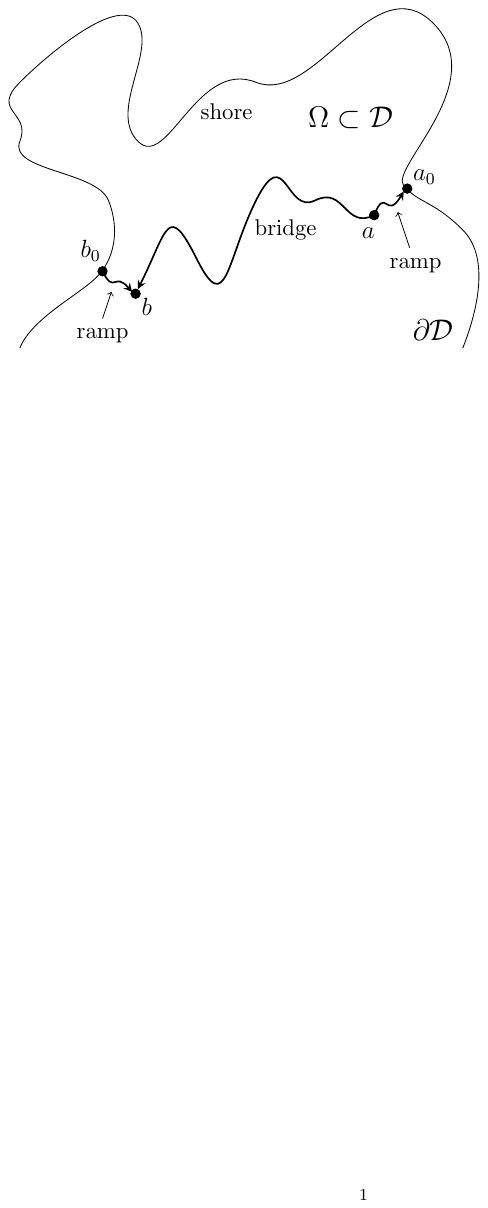}
\caption{{A schematic drawing of a `bridge-shore' subdomain. The `bridge' and two `ramps' are special cuts oriented from~$a$ or towards~$b$; the `shore' part of~$\partial\Omega$ is the boundary arc~$(a_0b_0)_\cD$.}
\label{fig:bridge-shore}}
\end{figure}

\begin{definition}
\label{def:bridge} Let~$a,b\in\cD$ and~$\rho>0$ be a {sufficiently} small parameter; {see Remark~\ref{rem:ramps-are-far} below.} We say that the special cut~$(ab)$ is a $\rho$-`bridge' in~$\cD$ if the following conditions hold:
\begin{itemize}
\item $(ab)$ is contained in~$\cD$ and, moreover, lies in the main (i.e., containing the origin) connected component of the Euclidean ${ \frac12\rho}\|b-a\|_{2,1}$-interior of~$\cD$;
\item one has~$\dist(a,\partial\cD)\le \rho\|b-a\|_{2,1}$ and the same for~$\dist(b,\partial\cD)$.
\end{itemize}
Further, let~$a_0,b_0\in\partial\cD$ be the $\|\cdot\|_{2,1}$-closest to~$a$ and $b$, respectively, boundary points of~$\cD$. We call two special cuts~$(aa_0)$ and~$(b_0b)$ the `ramps' of the `bridge'~$(ab)$. Finally, we say that~$a$ and~$b$ define a `bridge-shore' subdomain of~$\cD$ if 
\begin{itemize}
\item the part of~$\cD$ that lies to the {left} of {$(b_0b)\cup (ba)^*\cup(aa_0)$, which we call~$\Omega$, does not contain the origin. (Recall that $(ba)^*$ is the dual special cut going from~$b$ to~$a$, which always stays $O(1)$--close to the primary special cut~$(ab)$.)}
\end{itemize}
\end{definition}

\begin{remark}
\label{rem:ramps-are-far} 
{It follows from Lemma~\ref{lem:polar-coordinates} that} two special cuts emanating from the same point cannot intersect, which implies that the `ramps' $(aa_0)$ and $(b_0b)$ never intersect the `bridge' $(ab)$; see Lemma~\ref{lem:ramps-not-screened} below for a stronger version of this statement. It also easily follows from~\eqref{eq:dist-2,1-comparable} that~$\diam((aa_0))\le (1-\kappa^2)^{-1/2}\rho \dist(a,b)$ and similarly for~$(b_0b)$. Therefore, if~$\rho\le \frac13(1-\kappa^2)^{1/2}$, then
\begin{equation}
\label{eq:ramps-are-far}
\dist((aa_0),(b_0b))\,\ge\,\tfrac13\dist(a,b)\,\ge\,\tfrac13\|b-a\|_{2,1}\,.
\end{equation}
In particular, this means that the two `ramps'~$(aa_0)$ and~$(b_0b)$ do not intersect each other and hence~$(b_0b)\cup(ba)^*\cup(aa_0)$ is a simple curve that cuts~$\cD$ into two parts.
\end{remark}

{The next lemma addresses the geometry of `bridge-shore' subdomains~$\Omega\subset\cD$ in more detail. In words, it says that the `ramps' always stay sufficiently far from the other parts of $\partial\Omega$ so that one can apply Proposition~\ref{prop:upper-bound} to them.}

\begin{lemma} \label{lem:ramps-not-screened} Let $\rho\le \frac13(1-\kappa^2)^{1/2}$ and~$\Omega\subset\cD$ be a `bridge-shore' subdomain constructed starting from points~$a,b\in\cD$. Then, for each~$z\in(aa_0)$ one has
\[
\dist(z,\partial\Omega\smallsetminus (aa_0))\,\ge\,\mathrm{cst}\cdot \min\{\|z-a\|_{2,1},\|z-a_0\|_{2,1}\},
\]
where~$\mathrm{cst}>0$ is a constant that depends on~$c_0$ only.
\end{lemma}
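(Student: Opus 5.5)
We split $\partial\Omega\smallsetminus(aa_0)$ into three pieces: the other ramp $(b_0b)$, the bridge $(ba)^*$, and the shore $(a_0b_0)_\cD$. We then bound $\dist(z,\cdot)$ from below separately for each piece. Throughout, write $d:=\|b-a\|_{2,1}$ and $\ell:=\|a_0-a\|_{2,1}$. By the choice of $a_0$ as the $\|\cdot\|_{2,1}$-closest boundary point and by~\eqref{eq:dist-2,1-comparable}, $\ell\le\dist(a,\partial\cD)\le\rho d$. Points $z\in(aa_0)$ are parametrized by $\|z-a\|_{2,1}$ (special cuts are $\|\cdot\|_{2,1}$-geodesics), so $\|z-a\|_{2,1}+\|z-a_0\|_{2,1}=\ell$.

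\emph{The other ramp.} This piece is the easiest. By~\eqref{eq:ramps-are-far}, $\dist((aa_0),(b_0b))\ge\frac13 d\ge\frac1{3\rho}\ell$, which is much larger than $\min\{\|z-a\|_{2,1},\|z-a_0\|_{2,1}\}\le\ell$.

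\emph{The shore.} We use the extremality of $a_0$. Suppose $w\in\partial\cD$ and $z\in(aa_0)$ with $\|z-a\|_{2,1}=s$. Along a special cut, $\|\cdot\|_{2,1}$-distances are additive, so the reverse triangle inequality in the Minkowski sense applies to the $\kappa$-Lipschitz surface $\Theta^\bullet$, whose chords are space-like. Concretely, the $\|\cdot\|_{2,1}$-distance satisfies the reverse triangle inequality $\|w-a\|_{2,1}\ge$ ... ; this is the wrong direction for us. Instead we argue directly with Euclidean distances. Since $\|w-a\|_{2,1}\ge\ell$ and $\|w-a\|_{2,1}\le\dist(w,a)\le\dist(w,z)+\dist(z,a)$, we have $\dist(w,z)\ge\ell-\dist(z,a)$. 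This only gives a bound near $a$, not near $a_0$, so we refine it. We reduce to a local two-dimensional Minkowski computation: lift to $\Theta^\bullet$, where the cut $(aa_0)$ lies in a time-like plane through $(a,\cQ^\bullet(a))$, and the set $\{w:\|w-a\|_{2,1}\ge\ell\}$ is the exterior of a "hyperbolic ball" whose boundary meets the cut transversally at $a_0$ with an angle bounded below in terms of $\kappa$. Quantitatively, for $w$ with $\|w-a\|_{2,1}\ge\ell$ and $z$ on the cut at parameter $s<\ell$, we aim to show $\dist(w,z)\ge c(\kappa)(\ell-s)$. The argument uses that $y(x)$ is $\kappa(1-\kappa^2)^{-1/2}$-Lipschitz in the $\alpha$-frame, together with $\|w-a\|_{2,1}^2=|w-a|^2-|\cQ^\bullet(w)-\cQ^\bullet(a)|^2$ and the $\kappa$-Lipschitz bound on $\cQ^\bullet(w)-\cQ^\bullet(z)$. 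Expanding $\|w-a\|^2_{2,1}-\|z-a\|^2_{2,1}$ gives a quantity of order $\ell(\ell-s)$ that is bounded by $O(|w-z|\cdot\ell)$, because $|w-a|,|z-a|=O(\ell)$ when $|w-z|\le\ell$. This yields $|w-z|\ge \mathrm{cst}(\ell-s)=\mathrm{cst}\,\|z-a_0\|_{2,1}$.

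\emph{The bridge.} This is the main obstacle, since $z$ near $a$ is close to $(ba)^*$ and we need the ramp and bridge to separate linearly. Both $(aa_0)$ and $(ab)$ are special cuts emanating from $a$ in different directions $\alpha'\ne\alpha$, and $(ba)^*$ is $O(1)$-close to $(ab)$. We would invoke Lemma~\ref{lem:polar-coordinates}, the quantitative "wiggled polar coordinates" statement. It gives $\dist(z,(ab))\ge c\,|\alpha-\alpha'|\,\|z-a\|_{2,1}$, or a similar bound. To get a uniform angular gap, we observe that $(ab)$ lies in the $\frac12\rho d$-interior of $\cD$ while the ramp reaches $\partial\cD$ within $\rho d$. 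Hence the ramp cannot go in a direction close to that of the bridge: near the bridge direction the cut from $a$ stays in the interior for $\|\cdot\|_{2,1}$-length of order $d\gg\ell$. By Lemma~\ref{lem:polar-coordinates}, this forces a separation of the ramp direction $\alpha'$ from $\alpha$ bounded below by a constant depending on $c_0$. Since $\min\{\cdot\}\ge1$ whenever $z$ is not within $O(1)$ of $a$ or $a_0$, the $O(1)$ discrepancy between $(ab)$ and $(ba)^*$ can be absorbed into the constant. Combining the three pieces proves the lemma.
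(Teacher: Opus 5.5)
Your proposal is correct and follows the paper's proof. You use the same split of $\partial\Omega\smallsetminus(aa_0)$ into three pieces: the other ramp (via~\eqref{eq:ramps-are-far}); the bridge, where the angular gap $\phi\ge\frac12A^{-1}$ comes from $A\phi\ell\ge\dist(a_0,(ab))\ge\frac12\rho\|b-a\|_{2,1}$ together with $\ell\le\rho\|b-a\|_{2,1}$; and the shore (via extremality of $a_0$). For the bridge, cite Corollary~\ref{cor:cuts-do-not-screen-each-other} rather than Lemma~\ref{lem:polar-coordinates} itself, since you need the distance to the whole cut.

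Your expansion of $\|w-a\|_{2,1}^2-\|z-a\|_{2,1}^2$ for the shore is an inline reproof of the distorted triangle inequality~\eqref{eq:distorted-triangle-ineq}. The paper applies that inequality directly to get $\dist(z,\partial\cD)\ge\frac{1-\kappa^2}{1+\kappa^2}\|z-a_0\|_{2,1}$ with no case split. Drop the `time-like plane' heuristic: the lift of a special cut lies in a light-like plane. Nothing in your estimate depends on it.
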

\begin{proof}
Let~$z\in (aa_0)$ and~$\phi$ be the angle between the directions of the `ramp'~$(aa_0)$ and the `bridge'~$(ab)$. 
Due to the upper bound in Lemma~\ref{lem:polar-coordinates} we have
\[
A\phi \|a_0-a\|_{2,1}\,\ge\, \dist(a_0,(ab))\,\ge\, {\tfrac12\rho}\|b-a\|_{2,1}\,,
\]
where~$A>0$ depends on~$c_0$ only. This gives the lower bound~$\phi\ge {\frac12 A^{-1}}$ as $\|a_0-a\|_{2,1}\le \dist(a,\partial\cD)\le \rho\|b-a\|_{2,1}$. It now follows from Corollary~\ref{lem:polar-coordinates} that
\[
\dist(z,(ab))\,\ge\,{\tfrac12 cA^{-1}}\cdot \|z-a\|_{2,1},
\]
where $c>0$ also depends on~$c_0$ only.

Since $\|a-a_0\|_{2,1}=\|a-z\|_{2,1}+\|z-a_0\|_{2,1}$ and~$a_0$ is the~$\|\cdot\|_{2,1}$-closest to~$a$ point of~$\partial\cD$, it is also easy to see from~\eqref{eq:dist-2,1-comparable} and the `distorted triangle inequality'~\eqref{eq:distorted-triangle-ineq} that
\[
\dist(z,\partial\cD)\,\ge\,\frac{1-\kappa^2}{1+\kappa^2}\|z-a_0\|_{2,1}\,.
\]
Together with~\eqref{eq:ramps-are-far} this concludes the proof.
\end{proof}

The next proposition justifies the relevance of `bridge-shore' subdomains for our adaptation of the renormalization argument from~\cite{duminil-manolescu-tassion-fractal}. Its proof is similar to the proof of \cite[Lemma~5.3]{duminil-manolescu-tassion-fractal} and relies upon the discrete holomorphicity of a fermionic observable, the lower bound provided by Proposition~\ref{prop:lower-bound} applied to the \emph{wired} `bridge' part of~$\partial\Omega$ (see Remark~\ref{rem:lower-bound} above), and the upper bound provided by Proposition~\ref{prop:upper-bound} that can be used for two `ramps' $(aa_0)$ and~$(b_0b)$ due to Lemma~\ref{lem:ramps-not-screened}.

\begin{proposition}\label{prop:lower-bound-w/o-legs} Provided that~$\rho$ is chosen small enough (depending on~$c_0$ only) the following holds: if~$\Omega$ is a `bridge-shore' subdomain of~$\cD$ whose `bridge' part~$(ba)^*$ of the boundary has {$\|\cdot\|_{2,1}$-}length~$L$, then
\[
\sum_{z\in(a_0b_0)_{\Omega}}\mathbb{P}^{0/1}_\Omega[v^\bullet(z)\leftrightarrow{(ba)^*}]\,\ge\,\mathrm{cst}\cdot L^{\frac12},
\]
where $\mathrm{cst}>0$ is a constant that depends on~$c_0$ only.
\end{proposition}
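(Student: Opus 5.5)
The plan is to mimic the proof of \cite[Lemma~5.3]{duminil-manolescu-tassion-fractal}. The idea is to use a conservation law for the fermionic (martingale) observable in the Dobrushin domain~$\Omega$. That observable has wired arc $(ba)^*$ and free arc $(b_0b)\cup(a_0b_0)_\Omega\cup(aa_0)$.

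\textbf{Step 1: a boundary identity.} By the discrete holomorphicity of the Kadanoff--Ceva fermionic observable $F$ in~$\Omega$, a discrete Cauchy-type integral of $F^2$ along the whole of~$\partial\Omega$ vanishes. Equivalently, the `boundary flux' of the associated discrete primitive $H=\int\Im(F^2dz)$ has zero total. On the free arc, the boundary values of $|F|^2$, weighted by the local geometry of half-quads, are proportional to the one-arm probabilities $\mathbb{P}^{0/1}_\Omega[v^\bullet(z)\leftrightarrow(ba)^*]$. On the wired arc, the analogous quantities are the dual one-arm probabilities $\mathbb{P}^{0/1}_\Omega[v^\circ(z)\leftrightarrow^*(b_0b)\cup(a_0b_0)_\Omega\cup(aa_0)]$. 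The positivity of $F$ on special cuts (Proposition~\ref{prop:observables-boundary}(iii)) lets me write both contributions with a definite sign. I expect to obtain an inequality of the form
\[
\sum_{z\in(ba)^*}\mathbb{P}^{0/1}_\Omega[\text{dual one-arm at }z]\;\le\;C\Big(\sum_{z\in(a_0b_0)_\Omega}+\sum_{z\in(aa_0)\cup(b_0b)}\Big)\mathbb{P}^{0/1}_\Omega[v^\bullet(z)\leftrightarrow(ba)^*].
\]
The constant $C$ should depend only on $c_0$ through \Unif. On the rough shore there is no sign control of the phase of $F$, so I will only use the trivial bound $|F|^2\lesssim$ one-arm probability there.

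\textbf{Step 2: lower bound on the bridge.} I will apply the dual version of Proposition~\ref{prop:lower-bound} (Remark~\ref{rem:lower-bound}) to the wired arc $(ba)^*$, which is a dual special cut of length~$L$. This gives that the left-hand side is at least $\mathrm{cst}_1L^{1/2}$. The exceptional-vertex assumption should be harmless, since such vertices are $O(1)$ close to the cut and can be removed by a monotone modification. To control the dependence on the rest of~$\Omega$, I would compare with a smaller domain via monotonicity in the domain: a dual arm in~$\Omega$ is implied by a dual arm in the subdomain.

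\textbf{Step 3: upper bound on the ramps.} By Lemma~\ref{lem:ramps-not-screened}, each ramp satisfies the hypothesis of Proposition~\ref{prop:upper-bound} with $\eta=\mathrm{cst}(c_0)$. Each ramp has $\|\cdot\|_{2,1}$-length at most $\rho L$. The free/wired roles match: the ramp is free, and the relevant connection is to the wired part. The only issue is that Proposition~\ref{prop:upper-bound} is stated for a domain whose wired arc is all of the rest of the boundary. I will handle this by monotonicity in boundary conditions, wiring everything except the ramp, which only increases the probabilities. Hence the ramp contributions are $O(\rho^{1/2}L^{1/2})$.

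\textbf{Step 4: conclusion.} Combining the steps gives
\[
\mathrm{cst}_1L^{1/2}\le C\Sigma_{\text{shore}}+C'\rho^{1/2}L^{1/2}.
\]
Choosing $\rho$ small makes $C'\rho^{1/2}\le\frac12\mathrm{cst}_1$, and the claim follows.

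I expect Step~1 to be the main obstacle. The difficulty is getting the correct signs and normalizations of the boundary terms at the corners $a,b,a_0,b_0$, and on the rough shore where no positivity is available. I would first try to write everything via the function $H$ and its boundary values: constant on the wired arc, and monotone along the free arc with increments comparable to the one-arm probabilities. The identity then reduces to a telescoping of $H$ between $a$ and $b$, and the bridge sum is controlled by this telescoping.
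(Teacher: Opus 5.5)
\textbf{Step 1 rests on the wrong conserved quantity, so it fails.} Steps 2--4 coincide with the paper's argument: the dual form of Proposition~\ref{prop:lower-bound} on the bridge, then Proposition~\ref{prop:upper-bound} on each ramp via Lemma~\ref{lem:ramps-not-screened} after wiring everything except that ramp, then $\rho$ small. The problem is Step~1, which carries the whole proof.

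The quadratic form $\tfrac12(\Im[F^2d\cS]+|F|^2d\cQ)$, whose primitive is $H$, vanishes identically along \emph{both} arcs of $\partial\Omega$. This is because $H\equiv0$ on the free arc and $H\equiv1$ on $(ba)^*$ (Proposition~\ref{prop:closed-forms}(iii)). Consequences:
\begin{enumerate}
\item Its contour sum around $\partial\Omega$ is trivially zero, and the ``telescoping of $H$ between $a$ and $b$'' carries no information.
\item $H$ is not monotone along the free arc with increments comparable to one-arm probabilities; it is constant there.
\item The one-arm probabilities enter $H$ only through increments transversal to the boundary, of order $|F|^2\asymp\mathbb{P}^2$. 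By Proposition~\ref{prop:observables-boundary}(i) it is $|F|$, not $|F|^2$, that is comparable to the one-arm probability. Moreover, there is no exact discrete flux identity for these increments, since $H$ is not discrete harmonic; one only has a maximum principle.
\item Even granting a flux identity, it would relate sums of \emph{squared} probabilities and could not produce $L^{1/2}$. On the bridge, Proposition~\ref{prop:lower-bound} plus Cauchy--Schwarz only give $\sum(\mathbb{P}^*)^2\ge(\sum\mathbb{P}^*)^2/O(L)\gtrsim 1$.
\end{enumerate}

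\textbf{The missing idea is to use the \emph{linear} closed form $F\,d\cS+i\overline{F}\,d\cQ$} from Proposition~\ref{prop:closed-forms}(i). This is the s-embedding analogue of the vanishing discrete contour integral of the observable in \cite{duminil-manolescu-tassion-fractal}. Let $G$ be its primitive. The argument then runs as follows.
\begin{enumerate}
\item The sum of the form over all boundary half-quads of $\Omega$ is $O(1)$, coming from the edges $a$ and $b$. By finite energy you may assume $L$ is large, so this term is absorbed.
\item By Proposition~\ref{prop:observables-boundary}(i)--(ii), each boundary increment of $G$ has modulus comparable to the corresponding primal or dual one-arm probability, \emph{linearly}.
\item Rotate so that $(ba)^*$ is a dual special cut going in the positive direction, and take real parts. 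Proposition~\ref{prop:observables-boundary}(iii) for the dual model makes every bridge term positive and comparable to its modulus, so the bridge contributes at least $\mathrm{cst}\cdot L^{1/2}$.
\item On the ramps and the shore the trivial bound $|\Re[\cdot]|=O(\mathbb{P}^{0/1}_\Omega[v^\bullet(z)\leftrightarrow(ba)^*])$ suffices, so no phase control on the rough shore is needed.
\end{enumerate}

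Your picture of a function that is monotone along an arc with increments comparable to one-arm probabilities is correct for $\Re G$ along the bridge, not for $H$. With $G$ in place of $H$, Steps 2--4 go through as you wrote them and reproduce the paper's proof. The comparison with a smaller domain in Step~2 is unnecessary, since Proposition~\ref{prop:lower-bound} imposes no condition on the rest of $\partial\Omega$.
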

\begin{proof}
See Section~\ref{subsec:lower-bound-w/o-legs}.
\end{proof}
From now onwards we assume that the small parameter~$\rho=\rho(c_0)>0$ is fixed once forever so that Proposition~\ref{prop:lower-bound-w/o-legs} holds. In particular, we will be simply saying `bridge' instead of $\rho$-`bridge' for the wired part of the boundary of `bridge-shore' subdomains discussed in Definition~\ref{def:bridge}. 

\section{Adaptation of Duminil-Copin--Manolescu--Tassion's renormalization argument}
\label{sec:renormalizaion}
\setcounter{equation}{0}
In this section we adapt the renormalization argument from~\cite{duminil-manolescu-tassion-fractal} to our context. This allows us to derive our main result -- Theorem~\ref{thm:main-theorem} -- from {Proposition~\ref{prop:upper-bound} and Proposition~\ref{prop:lower-bound-w/o-legs}.} As compared to~\cite{duminil-manolescu-tassion-fractal}, we use `bridge-shore' subdomains introduced in the previous section instead of `corner' domains used in~\cite[Section~5.3]{duminil-manolescu-tassion-fractal}. Another detail is that~\cite[Definition~5.4]{duminil-manolescu-tassion-fractal} involves a technical trick that relies upon symmetries of the square lattice. 
Thus, we need to replace this part of the proof by a different argument, {which is contained in Section~\ref{subsec:first-moment} below.}
For readers familiar with~\cite{duminil-manolescu-tassion-fractal} this can be loosely summarized as follows: instead of finding one `bridge-shore' subdomain on the next scale, we find a sufficiently big number thereof and argue that at least in one of them the contribution of the `far' part of the boundary (similar to~$S_3$ in~\cite[Section~5.3]{duminil-manolescu-tassion-fractal}) can be neglected due to Proposition~\ref{prop:upper-bound} \emph{and} Theorem~\ref{thm:RSW-Mahfouf}.

\subsection{Existence of {sufficiently} many {disjoint} `bridge-shore' subdomains}
\label{subsec:fjords}

Recall that we chose a small constant~$\rho=\rho(c_0)>0$ in Section~\ref{subsec:bridge-shore} so that Proposition~\ref{prop:lower-bound-w/o-legs} holds. (Clearly, one can decrease $\rho$ further if needed without breaking the proof of this proposition.) We start with an auxiliary geometric lemma that allows us to find a `bridge' in a vicinity of a given point in~$\cD$ whose $\|\cdot\|_{2,1}$-length is uniformly comparable to the distance from this point to~$\partial\cD$. {This statement is intuitively clear; however, its proof requires some care as we need to work with special cuts that may have complicated geometry rather than with straight lines. Our main tool to handle this is Lemma~\ref{lem:polar-coordinates} given in Appendix.}

\begin{figure}
\centering \includegraphics[clip, trim=4.2cm 18.2cm 6cm 4cm, width=0.9\textwidth]{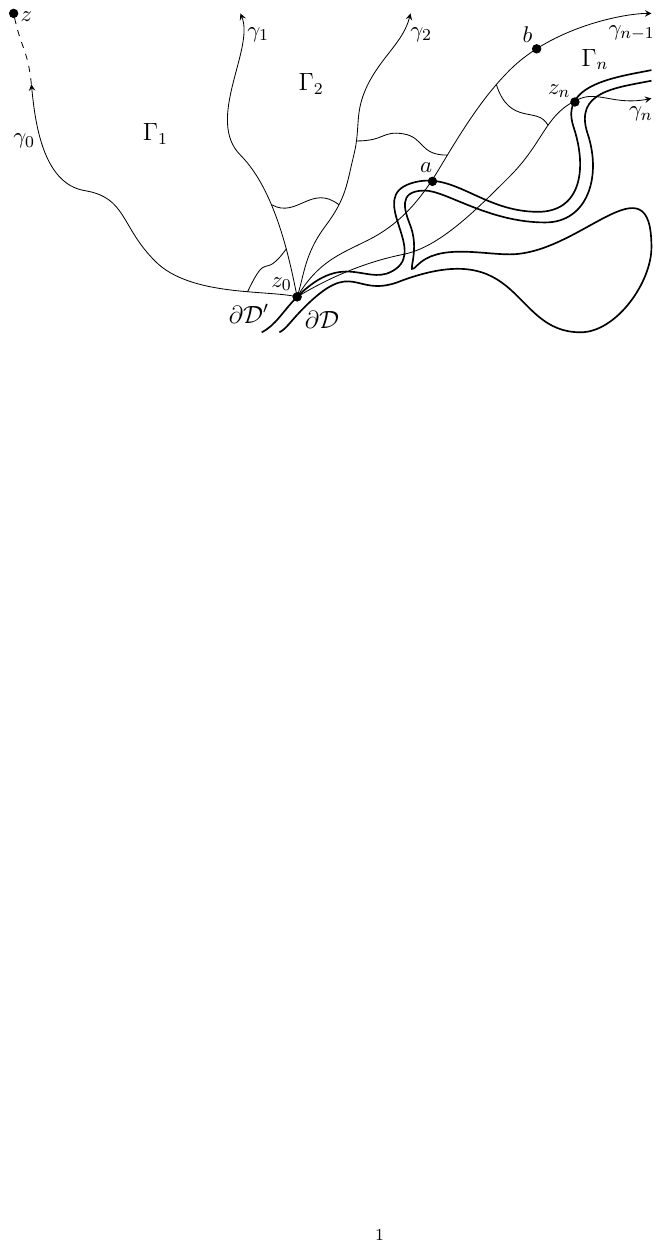}
\caption{\label{fig:bridge-near-pt}{Notation used in the proof of Lemma~\ref{lem:bridge-near-pt}.}}
\end{figure}

\begin{lemma} 
\label{lem:bridge-near-pt}
Provided that~$\rho>0$ is small enough, there exists a (big) constant $C=C(\rho,c_0)>0$ such that for each simply connected discrete domain~$\cD$ and each point~$z$ lying at the boundary of the main connected component of the $r$-interior of~$\Omega$ with $ r\ge C^2$ there exists a `bridge' in~$\cD$ {of a~$\|\cdot\|_{2,1}$-length at least $C^{-1}r$ that is contained in the Euclidean ball~$B(z,2r)$ together with its two `ramps'.}
\end{lemma}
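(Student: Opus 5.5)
The plan is to find a point $w$ of $\partial\cD$ close to $z$, place the bridge along a special cut passing at distance of order $\rho r$ from $w$, and use Lemma~\ref{lem:polar-coordinates} to control how special cuts bend. Since $z$ lies on the boundary of the main component of the $r$-interior (reading $\Omega$ as $\cD$ in the statement), there is a point $w\in\partial\cD$ with $\dist(z,w)=r$. The open disc $B(z,r)$ lies in $\cD$, and $z$ is joined to the origin inside the $r$-interior. The idea is to build a short special cut $(ab)$, of $\|\cdot\|_{2,1}$-length $\ell=C^{-1}r$ up to constant factors, inside $B(z,r)$ and near $w$. Its endpoints should be within $\rho\ell$ of $\partial\cD$, while its interior stays $\frac12\rho\ell$-far from $\partial\cD$.

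Concretely, I would take the point $z_1$ on the segment $[z,w]$ at Euclidean distance $s:=\frac34\rho\ell$ from $w$, so that $B(z_1,s)\subset\cD$. I would then consider the family of special cuts through $z_1$ in all directions $e^{i\phi}$ and follow each one in both directions until it first comes within distance $\rho\ell'$ of $\partial\cD$, where $\ell'$ is its current $\|\cdot\|_{2,1}$-length. By \eqref{eq:dist-2,1-comparable} and the Lipschitz graph property, each special cut is quasi-straight: over a $\|\cdot\|_{2,1}$-length $t$ it moves Euclidean distance between $t$ and $(1-\kappa^2)^{-1/2}t$ from its starting point.

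A cleaner alternative is to choose the cut direction $\alpha$ roughly perpendicular to $w-z$. One then slides the height $\mu$ of the cut $\partial\cD_\C^{[\alpha,\mu+]}$ from the level through $z$ towards $w$, restricted to a box of width $\sim\ell$ centered on the line $zw$. At the initial height the segment of $\|\cdot\|_{2,1}$-length $2\ell$ centered near $z$ lies deep inside $B(z,r)$, since its Euclidean diameter is at most $(1-\kappa^2)^{-1/2}2\ell\ll r$ for $C$ large. I would stop at the first height $\mu^*$ at which this segment, or a subsegment of it, comes within $\frac12\rho$ times its length of $\partial\cD$. Continuity in $\mu$ gives a first contact, and then I would trim the segment: take endpoints $a,b$ as the points nearest to boundary contacts on each side.

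The main obstacle is ensuring that both ends are near $\partial\cD$ while the whole segment stays $\frac12\rho\|b-a\|_{2,1}$-far from it. A single first-contact point typically touches the middle of the segment, not the ends. To handle this, I would use the contact point $p$ as the new base point. Special cuts emanating from a point near $p$ at distance $\sim\rho\ell$ from $\partial\cD$ form a wiggled polar system (Lemma~\ref{lem:polar-coordinates}), so among them one can find a cut that exits the neighborhood of $\partial\cD$ on both sides. The comparison of angles with distances in Lemma~\ref{lem:polar-coordinates}, combined with a compactness or continuity argument in the direction parameter, should pin down a cut whose two first boundary-approach points are roughly symmetric. Choosing $\rho$ small and then $C$ large absorbs the distortion constants $A,c$. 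The remaining checks are routine. The ramps have diameter at most $(1-\kappa^2)^{-1/2}\rho\dist(a,b)$, so everything stays in $B(z,2r)$. Connectivity of the bridge to the main component of the $\frac12\rho\ell$-interior follows because it is close to $z$, which is $r$-deep and connected to the origin. Finally, $r\ge C^2$ guarantees $\ell\ge C\gg O(1)$, so the discretization errors are negligible.
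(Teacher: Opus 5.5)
There is a genuine gap, and it sits exactly where you locate `the main obstacle': the step you leave to `a compactness or continuity argument in the direction parameter' is the entire content of the lemma. You need a special-cut segment $(ab)$ whose endpoints are within $\rho\|b-a\|_{2,1}$ of $\partial\cD$, while every point of it is at least $\frac12\rho\|b-a\|_{2,1}$ away from $\partial\cD$. That is a window of only a factor~$2$, measured against the \emph{actual} length of the segment rather than a preset $\ell$. In addition you need $\|b-a\|_{2,1}\ge C^{-1}r$ with $C=C(\rho,c_0)$.

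No intermediate-value argument gives this, for three reasons. First, the points where cuts first come close to $\partial\cD$ move discontinuously as the height or the direction varies (a cut can suddenly start entering a narrow fjord). Second, trimming at contact points can produce arbitrarily short segments. Third, your growth procedure from $z_1$ only certifies that points visited early are $\rho$ times a \emph{smaller} length away from $\partial\cD$.

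There is a further problem. When you rotate a cut from a base point $z_0$ by an angle $\Delta\phi$, Lemma~\ref{lem:polar-coordinates} bounds the displacement only by $A\,\Delta\phi\,\|b-z_0\|_{2,1}$. This can be much larger than the usable length $\|b-a\|_{2,1}$ if the cut touches the boundary again far from $z_0$. `Roughly symmetric approach points' does not address this.

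The missing idea is the paper's pigeonhole over logarithmically spaced directions, which handles exactly this mismatch. Set $r'=\frac12\rho\ell/N$, let $\cD'$ be the main component of the $r'$-interior of $\cD$, and take $z_0\in\partial\cD'$ closest to $z$. Consider the cuts $\gamma_n$ from $z_0$ with directions rotated by $\phi_n=\frac{\rho}{2A}\log(n+1)$. Consider also the sectors $\Gamma_n$, with directions in $(\phi_{n-1},\phi_n]$ and $\|\cdot\|_{2,1}$-radii in $[\frac nN\ell,\ell]$.

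Take the minimal $n$ such that $\Gamma_n$ contains a point $z_n\in\partial\cD'$. Let $a$ be the last contact of $\gamma_{n-1}$ with $\partial\cD'$, so that $\dist(a,\partial\cD)=r'$ and, by minimality, $\|a-z_0\|_{2,1}\le\frac{n-1}N\ell$. Let $b\in\gamma_{n-1}$ be the point at the radius of $z_n$. Then:
\begin{itemize}
\item $\|b-a\|_{2,1}\ge\ell/N$;
\item $(ab)$ lies in $\cD'$, hence stays $r'\le\frac12\rho\|b-a\|_{2,1}$ away from $\partial\cD$;
\item $\dist(b,\partial\cD)\le\dist(b,z_n)+r'\le\frac{\rho}{2n}\|b-z_0\|_{2,1}+r'\le\frac{\rho}{2}\|b-a\|_{2,1}+r'\le\rho\|b-a\|_{2,1}$.
\end{itemize}
The angular gap $\phi_n-\phi_{n-1}\le\frac{\rho}{2An}$ exactly compensates the factor $n$ by which $\|b-z_0\|_{2,1}$ may exceed $\|b-a\|_{2,1}$. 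Since finitely many such sectors sweep all directions, $N$, and hence $C$, depends only on $\rho$ and $c_0$.

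You also do not address orientation. The cut from $a$ to $b$ is not the cut from $b$ to $a$, so the endpoints cannot be swapped freely. One must check that the `bridge-shore' side contains neither $z$ nor the origin, which is what Proposition~\ref{prop:many-bridges} needs. In the paper this follows because the boundary-free sectors $\Gamma_1,\dots,\Gamma_{n-1}$ join $z$ to a definite side of $(ab)$.
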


\begin{proof} 
Let $A=A(c_0)$ be the constant from {Lemma~\ref{lem:polar-coordinates},} $N:=\lfloor 2\pi\exp(2\rho^{-1}A)\rfloor$, and $\ell:=\frac12(1-\kappa^2)^{1/2}r$. We will prove that one can find a `bridge' in~$\cD$ of $\|\cdot\|_{2,1}$-length at least~$\ell/N$ that is contained in~$B(z,2r)$ together with its two `ramps', which gives the desired statement with~$C=2(1-\kappa^2)^{-1/2}N$.

Denote~$r':={ \frac12\rho}\ell/N$ and let~$\cD'$ be the main connected component of the Euclidean $r'$-interior of~$\cD$. Further, let~$z_0\in\partial\cD'$ be the $\|\cdot\|_{2,1}$-closest point to~$z$ at the boundary of~$\cD'$ and~$\gamma_0$ be the initial part of $\|\cdot\|_{2,1}$-length $\ell$ of the special cut that starts at~$z_0$ and passes through~$z$. It is easy to see that $\gamma_0\smallsetminus\{z_0\}\subset\cD'$ since
\[
\|z-z_0\|_{2,1}\,\ge\, (1-\kappa^2)^{1/2}\dist(z,z_0)\,\ge\, (1-\kappa^2)^{1/2}(r-r')\,\ge\,\ell.
\]

Without loss of generality, assume that~$\gamma_0$ goes in the negative direction and denote by~$\gamma_n$ the special cuts of length~$\ell$ emanating from~$z_0$ and having directions $\exp\big(i(\pi-\frac\rho{2A}\log (n+1))\big)$; {see Fig.~\ref{fig:bridge-near-pt} for the notation.} {Let
\[
\Gamma_n\,=\,\left\{z'\in\cD': \begin{array}{l} \tfrac{n}{N}\ell\le\|z'-z_0\|_{2,1}\,\le\, \ell\ \text{and the special cut going from~$z_0$ to~$z'$}\\
\text{has direction~$\exp[i(\pi-\phi)]$ with~$\tfrac{\rho}{2A}\log n<\phi\le \tfrac{\rho}{2A}\log(n+1)$.}\end{array}\right\}
\]
Since~$\cD'$ is a simply connected domain and~$z_0\in\cD$, we should have~$\Gamma_n\cap \partial\cD'\ne\varnothing$ at least for one index~$n=1,\ldots,N$. Let $n$ be the \emph{minimal} index for which there exists a boundary point~$z_n\in\Gamma_n\cap \partial\cD'$.}

Recall that $(\gamma_0\smallsetminus z_0)\cap\partial\cD'=\varnothing$ and denote by~$a\in\gamma_{n-1}$ the last (that is, $\|\cdot\|_{2,1}$-furthest from~$z_0$) intersection point of~$(z_0 z_{n-1})\subset \gamma_{n-1}$ and~$\partial\cD'$. (We set~$a=z_0$ if no such intersection exists.) { Since~$\Gamma_{n-1}\cap\partial\cD'=\emptyset$ we have
$\|a-z_0\|_{2,1}\le (n-1)\ell/N$.}
Let~$b\in\gamma_{n-1}$ be chosen so that~$\|b-z_0\|_{2,1}=\|z_n-z_0\|_{2,1}$. We have
\[
\|b-a\|_{2,1}\,=\,\|b-z_0\|_{2,1}-\|a-z_0\|_{2,1}\,\ge\,{\frac {n\ell}N -\frac{(n-1)\ell}N}\,=\,\frac{\ell}{N}\,.
\]

By construction, the special cut $(ab)\subset\gamma_{n-1}$ lies inside~$\cD'$ and hence inside the ${ \frac12\rho}\|b-a\|_{2,1}$-interior of~$\cD$ since ${ \frac12\rho}\|b-a\|_{2,1}\ge {\frac12\rho}\ell/N=r'$. Also, we have $\dist(a,\partial\cD)=r'$ and it follows from {Lemma~\ref{lem:polar-coordinates}} that
\begin{align*}
\dist(b,\partial\cD)\,\le\,\dist(b,z_n)+r'\,&\le\, A\cdot \frac{\rho(\log (n+1)-\log n)}{2A}\,\|b-z_0\|_{2,1}+r'\\
&\le\, \frac{\rho}{2n}\biggl(\|b-a\|_{2,1}+\frac{(n-1)\ell}{N}\biggr)+r'\\
&\le\,\frac{\rho}{2}\,\|b-a\|_{2,1}+r'\,\le\,\rho\|b-a\|_{2,1}.
\end{align*}

Thus, the special cut~$(ab)$ satisfies the quantitative conditions listed in {Definition~\ref{def:bridge}.} Also, one has $(ab)\subset B(z_0,\frac12r)$ since
\[
\diam(\gamma_{n-1})\le (1-\kappa^2)^{-1/2}\ell=\tfrac12r\,.
\]
Moreover, if $a_0,b_0\in\partial\cD$ are the~$\|\cdot\|_{2,1}$-closest boundary points to~$a$ and~$b$, then
\[
\diam((b_0b))\le (1-\kappa^2)^{-1/2}\|b-b_0\|_{2,1}\le (1-\kappa^2)^{-1/2}\dist(b,\partial\cD)\le\tfrac12\rho r\,.
\]
and similarly for~$(aa_0)$. Hence,
\[
\diam\big({ (b_0b)\cup (ba)^*\cup (aa_0)}\big)\le (\tfrac12+\rho)r\,.
\]
In particular, this cut cannot separate~$z$ and the origin in~$\cD$ as~$z$ belongs to the main connected component of the~$r$-interior of~$\Omega$. By construction,~$z$ {can be} connected in~$\cD'$ to the \emph{left} side of~$(ab)$ {by a curve going through the domains~$\Gamma_1,\ldots,\Gamma_{n-1}$. Hence,} the `bridge-shore' subdomain of~$\cD$ cut by the curve~$ (b_0b)\cup(ba)^*\cup(aa_0)$ does not contain~$z$ and the origin. Since~$\dist(z,z_0)\le r$, it is also clear that this cut lies in the ball~$B(z_0,(\frac12+\rho)r)\subset B(z,2r)$. 
\end{proof}

We now use Lemma~\ref{lem:bridge-near-pt} to prove the existence of arbitrarily many disjoint `bridge-shore' subdomains $\Omega_1,\ldots,\Omega_N$ of~$\cD$ such that the corresponding `bridges' lie in the disc~$B_{2R}$ and have $\|\cdot\|_{2,1}$-lengths or order~$R/N$.

\begin{proposition} \label{prop:many-bridges}
Let $N\in\mathbb{N}$ and $\cD$ be a simply connected discrete domain such that~$0\in\cD$. Provided that $R:=\dist(0,\partial\cD)$ is sufficiently large (depending on~$N$ only) one can find~$N$ `bridges' $\gamma_1,\ldots,\gamma_N$ that lie in the disc $B_{2R}$, have {$\|\cdot\|_{2,1}$-lengths at least $R/4CN$} where $C$ is the constant from Lemma~\ref{lem:bridge-near-pt}, and such that the corresponding `bridge-shore' subdomains $\Omega_1,\ldots,\Omega_N$ of~$\cD$ are disjoint.
\end{proposition}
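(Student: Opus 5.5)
The plan is to apply Lemma~\ref{lem:bridge-near-pt} $N$ times with $r:=R/(4N)$, at $N$ points $z_1,\ldots,z_N$ chosen on the boundary of the main connected component $\cD_r$ of the Euclidean $r$-interior of~$\cD$, inside $B_{R}$ and pairwise at distance more than $4r$. Each application produces a `bridge' $\gamma_k$ of $\|\cdot\|_{2,1}$-length at least $C^{-1}r=R/4CN$ which, together with its two `ramps', lies in $B(z_k,2r)\subset B_{2R}$. We need $r\ge C^2$, which holds once $R\ge 4NC^2$; this is the `sufficiently large depending on~$N$' condition.

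To choose the points, note that $0\in\cD_r$ because $\dist(0,\partial\cD)=R>r$, and that $\partial\cD_r$ meets $\partial B_{R-r}$-neighbourhood scale: there is a point of $\partial\cD$ at distance exactly $R$ from the origin. Hence the segment from $0$ towards that point exits $\cD_r$ at some $w$ with $|w|\le R$. Since $\cD$ is simply connected and contains $B_R$, I would follow the connected boundary curve of $\cD_r$ starting at~$w$. Either this curve leaves $B_R$, or it stays inside. In both cases I will extract an arc of $\partial\cD_r$ of diameter at least roughly $R/2$ within $B_R$. In the first case this is the piece between $w$ and the first exit from $B_R$, which has diameter at least $R-|w|$; if that is small, I use instead the fact that $\partial\cD_r$ surrounds the disc $B_{R-r}$ locally. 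A cleaner route is to use the radial projection: every ray from~$0$ hits $\partial\cD_r$ before radius $R$ or passes within distance~$r$ of $\partial\cD$. So the set $\partial\cD_r\cap \overline{B_{R}}$ projects onto all directions, and a connected arc of it has diameter at least of order~$R$. Along such a connected set of diameter $\ge R$, one can greedily pick $N$ points with mutual distances $>4r=R/N$. Here the constant may need adjusting, for instance $r=R/(8N)$; the stated length bound absorbs this after renaming constants, or else we take $r$ exactly as stated and use a spread of $\ge R$.

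The balls $B(z_k,2r)$ are then pairwise disjoint, so the cuts $(b_0b)\cup(ba)^*\cup(aa_0)$ are disjoint. It remains to show that the `bridge-shore' subdomains $\Omega_k$ are disjoint, and this is the main obstacle. Each $\Omega_k$ is the component of $\cD$ cut off by a short curve (diameter $\le 2r$) that does not contain the origin. For two disjoint cross-cuts of a simply connected domain, the components not containing~$0$ are either disjoint or nested. Suppose $\Omega_j\subset\Omega_k$. Then the cut of $\Omega_j$ lies inside $\Omega_k$ and within distance $2r$ of $z_j$. But the proof of Lemma~\ref{lem:bridge-near-pt} shows that $z_j$ lies in the main component of the $r$-interior, on the side of its own cut containing the origin. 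I would argue that $z_j$ is also connected to $0$ inside $\cD_r$ by a path avoiding $B(z_k,2r)$. Such a path exists since $\cD_r$ is connected and $B_{R-r}\subset\cD_r$, giving radial access away from the small ball around $z_k$. Hence $z_j\notin\Omega_k$, and so the cut of $\Omega_j$, being near $z_j$ and on the origin side, is not in $\Omega_k$. Making this path-avoidance rigorous, possibly by enlarging the separation of the $z_k$, is where I expect the care to be needed.
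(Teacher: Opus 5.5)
Your skeleton is right: apply Lemma~\ref{lem:bridge-near-pt} at $N$ points of $\partial\cD_r$ that are more than $4r$ apart, then prove disjointness. But two load-bearing steps are false as stated.

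First, the points cannot in general be found inside $B_R$, and the `radial projection' claim is wrong. Take $\cD$ to be a huge disc with a radial slit starting at a point $p$ with $|p|=R$. For $|x|\le R$ the nearest boundary point is $p$, so $\partial\cD_r\cap\overline{B_R}\subset\partial B(p,r)$, and this set contains no two points at distance $>4r$. Likewise, the ray from $0$ in the direction of $-p$ neither meets $\partial\cD_r$ nor comes $r$-close to $\partial\cD$ before radius $R$. The fix, as in the paper, is to search in $\overline{B_{2(R-r)}}$, which still gives $B(z_k,2r)\subset B_{2R}$. The set $\partial\cD_r$ is connected, being the boundary of a bounded simply connected domain, and it contains a point $w$ with $|w|=R-r$. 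Hence $\partial\cD_r\cap\overline{B_{2(R-r)}}$ contains a continuum of diameter at least $R-r$. On it you can pick $N$ points with mutual distances $\ge (R-r)/(N-1)>R/N=4r$.

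Second, in the disjointness step the path you want need not exist, and enlarging the separation does not help. Suppose the part of $\cD$ containing $z_j$ is attached to the part containing $B_R$ only through a corridor of width between $2r$ and $3r$, and $z_k\in\partial\cD_r$ sits in the middle of this corridor. Then $B(z_k,2r)$ covers a full cross-section of the corridor, so it blocks every path in $\cD$ from $z_j$ to $0$, however far apart $z_j$ and $z_k$ are.

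What rescues your direct approach is a fact you do not use. By the proof of Lemma~\ref{lem:bridge-near-pt}, the cut $c_k=(b_0b)\cup(ba)^*\cup(aa_0)$ has diameter at most $(\frac12+\rho)r<r$, and its endpoints lie on $\partial\cD$. So every point of $c_k$ is within distance $<r$ of $\partial\cD$, and $c_k$ cannot meet $\cD_r$. Since $\cD_r$ is connected and contains $0$, this gives $\Omega_k\cap\cD_r=\varnothing$, hence $z_j\notin\Omega_k$.

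Even then, `$c_j$ is near $z_j$ and $z_j\notin\Omega_k$' does not imply $c_j\not\subset\Omega_k$, because the segment from $z_j$ to $c_j$ may leave $\cD$. You need a path inside $\cD$ from $z_j$ to the origin side of the $j$-th bridge that avoids $c_k$. The local path through $\Gamma_1,\ldots,\Gamma_{n-1}$ built in that proof does the job.

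With these repairs, your one-shot choice of points is a valid alternative to the paper's proof. The paper never rules out nesting. It picks each new point on the boundary of the $r$-interior of $\cD\smallsetminus\bigcup_j\Omega_{j,k_j}$, and replaces an old subdomain whenever the new one contains it. It then shows termination by a packing argument on the disjoint $\frac14\rho r$-vicinities of the bridges. As a result, the paper needs only the stated conclusion of Lemma~\ref{lem:bridge-near-pt} (cut inside $B(z,2r)$, with $z$ and $0$ outside $\Omega$), not the finer diameter bound from its proof.
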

\begin{proof} Let~$ r=R/4N$ and $\cD^{(r)}$ be the main connected component of the $r$-interior of~$\cD$. Introduce auxiliary variables~$k_1,\ldots,k_N\in\mathbb{N}_0$ {whose meaning in the forthcoming proof is `the number of tries used to cut~$\Omega_j$'}.
To start, we {set~$n:=1$ and $k_1:=1$,} choose an arbitrarily point~$z_{1,1}\in\partial\cD^{(r)}\cap B_{ 2(R-r)}$, and use Lemma~\ref{lem:bridge-near-pt} to construct the corresponding `bridge-shore' subdomain \mbox{$\Omega_{1,1}\subset \cD$} whose `bridge' {and two `ramps'} belong to the disc~$B_{ 2r}(z_{1,1})\subset B_{2R}$. 

We now inductively construct points~$z_{n,k}$ and the corresponding domains as follows. 
Introduce the notation
\[
\textstyle \cD_n:=\cD\smallsetminus\bigcup_{j=1}^n\Omega_{j,k_j}
\]
and let~$\cD_n^{(r)}$ be the main connected component of the $r$-interior of~$\cD_n$. On each next step we choose an arbitrarily point~$z_*\in \partial\cD^{(r)}_n\cap B_{ 2(R-r)}$ that is at least \mbox{$ 4r$-away} from the previously constructed points~$z_{1,k_1},\ldots,z_{n,k_n}$ and construct the corresponding `bridge-shore' subdomain~$\Omega_*\subset\cD$. Then, provided that~$n<N$,
\begin{itemize}
\item if $\Omega_*\cap \Omega_{j,k_j}=\varnothing$ for all~$j\le n$, then we declare $z_{n+1,1}:=z_*$, $\Omega_{n+1,1}:=\Omega_*$, { set~$k_{n+1}:=1$, and increase~$n$ by one;}
\item if $\Omega_*\supset\Omega_{j,k_j}$ for some $j\le n$ (that is necessarily unique), then we declare $z_{j,k_j+1}:=z_*$, $\Omega_{j,k_j+1}:=\Omega_*$ and increase~$k_j$ by one.
\end{itemize}

First, note that $N-1$ discs of radius~$ 2r=R/2N$ centered at the previously constructed points~$z_{j,k_j}$ cannot cover 
{ the set~$\partial\cD_n^{(r)}\cap B_{ 2(R-r)}$ as this part of the boundary of~$\cD_n^{(r)}$ always contains a continuum of diameter~$R=\dist(0,\partial\cD)$.}
Thus, we can always find the next point~$z_*$. Second, note that the `ramp' parts of~$\partial\Omega_*$ are always attached to the boundary~$\partial\cD$ of the original domain rather than to the previously constructed `bridges' and their `ramps' since we choose~$z_*$ sufficiently far from the previously constructed points~$z_{j,{k_j}}$. In particular, this implies that either $\Omega_*\cap\Omega_{j,k_j}=\varnothing$ or $\Omega_*\supset \Omega_{j,k_j}$ for all~$j\le n$. Third, note that we cannot increase any~$k_j$ {infinitely many times}: this would lead to a construction of an infinite sequence $\Omega_{j,1}\subset\Omega_{j,2}\subset\ldots$ of nested `bridge-shore' domains, which is impossible since we require that each new `bridge' lies in the ${ \frac12\rho} r$-interior of the previously constructed domain. This implies that the ${ \frac14\rho} r$-vicinities of all `bridges' do not intersect and hence one cannot find infinitely many of those having lengths bounded from below in a finite volume. Thus, the aforementioned inductive construction always reaches~$n=N$ and we can declare~$\Omega_j:=\Omega_{j,k_j}$ for {all $j$} when it stops.
\end{proof}

\subsection{First moment estimate}
\label{subsec:first-moment}
{ In this section we use Proposition~\ref{prop:lower-bound-w/o-legs}, Proposition~\ref{prop:many-bridges}, and Theorem~\ref{thm:RSW-Mahfouf} to prove that the expected number of boundary points of~$\cD$ that are connected to~$B_{\frac12R}$ in the ball $B_{5R}$ has the right order of magnitude; see Corollary~\ref{cor:lower-bound-to-bdry} below. Let us repeat that our argument is \emph{not} fully similar to~\cite[Section~5.3]{duminil-manolescu-tassion-fractal} as we cannot rely upon the symmetries of the square lattice that were used in~\cite{duminil-manolescu-tassion-fractal} in order to construct `corridor' domains and to neglect the contribution of the boundary of~$\cD$ that lies outside the disc~$B_{5R}$.}

\begin{proposition} \label{prop:lower-bound-w/o-far} There exists~$N_0=N_0(c_0)\in\mathbb{N}$ such that the following holds. For each simply connected discrete domain~$\cD$ with~$0\in\cD$ and~$R=\dist(0,\partial\cD)$ large enough (depending on~$c_0$ only) one can find at least one `bridge-shore' subdomain $\Omega=\Omega_j$ of~$\cD$ among those constructed in Proposition~\ref{prop:many-bridges} such that
\[
\sum_{z\in (a_0b_0)_{ \Omega}\cap B_{5R}}\mathbb{P}^{0/1}_{\Omega}\big[v^\bullet(z)\xleftrightarrow{B_{5R}\,}{ (ba)^*}\big]\,\ge\, \mathrm{cst} \cdot R^{\frac12},
\]
where $\mathrm{cst}>0$ is a constant that depends on~$c_0$ only.
\end{proposition}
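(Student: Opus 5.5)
Take $N$ large (to be fixed depending on $c_0$) and apply Proposition~\ref{prop:many-bridges} to obtain $N$ disjoint `bridge-shore' subdomains $\Omega_1,\dots,\Omega_N$ whose `bridges' lie in $B_{2R}$ and have $\|\cdot\|_{2,1}$-lengths $L_j\ge R/4CN$. For each $j$, Proposition~\ref{prop:lower-bound-w/o-legs} gives
\[
\sum_{z\in(a_0b_0)_{\Omega_j}}\mathbb{P}^{0/1}_{\Omega_j}[v^\bullet(z)\leftrightarrow(ba)^*]\,\ge\,\mathrm{cst}\cdot L_j^{1/2}\,\ge\,\mathrm{cst}'\,(R/N)^{1/2}.
\]
We then show that, for at least one $j$, the part of this sum that violates the localized event is at most half of it. The sum splits into two pieces. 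The first comes from connections from $v^\bullet(z)$, $z\in B_{5R}$, that reach $(ba)^*$ only after leaving $B_{5R}$. The second comes from shore points $z\notin B_{5R}$.

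Both pieces are contained in the event that $(ba)^*_j\subset B_{2R}$ is connected, inside $\Omega_j$, to $\partial B_{5R}$ and further to some shore point. So we bound the expected number $E_j$ of shore points of $\Omega_j$ connected to $(ba)^*_j$ through a path exiting $B_{5R}$.

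Conditionally on such an arm, the cluster reaches the shore of $\Omega_j$ far away. The plan is to bound $E_j$ by the product of two factors. The first is the probability $\pi_j$ that $(ba)^*_j$ is connected to $\partial B_{4R}$ within $\Omega_j$. The second is the expected number of boundary one-arm events from the part of $\partial\Omega_j$ outside $B_{4R}$ to the circle $\partial B_{4R}$, which we bound by $O(R^{1/2})$ by comparison with Proposition~\ref{prop:upper-bound}.

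For this comparison we use monotonicity in boundary conditions: wire $\partial B_{4R}\cap\Omega_j$, and cover the remaining region by Dobrushin domains whose free arcs are special cuts of length $O(R)$ (the first setup in the remark after Proposition~\ref{prop:upper-bound}). Then, by the domain Markov property and FKG,
\[
E_j\le \pi_j\cdot O(R^{1/2}).
\]
If a direct product bound is problematic, we instead explore the cluster of $(ba)^*_j$ from outside and apply Proposition~\ref{prop:upper-bound} to the unexplored part.

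The key step is to show that $\min_j\pi_j\to0$ as $N\to\infty$, uniformly. Since the $\Omega_j$ are disjoint, the arms from $(ba)^*_j$ to $\partial B_{4R}$ inside $\Omega_j$ for distinct $j$ are disjoint open paths crossing the annulus $B_{4R}\smallsetminus B_{2R}$. By Theorem~\ref{thm:RSW-Mahfouf} together with duality, and after putting wired boundary conditions everywhere by monotonicity, each of the $K=O(1)$ disjoint concentric annuli $B_{2^{k+1}R'}\smallsetminus B_{2^kR'}$ contains a dual circuit with probability at least $p_{\mathrm{circ}}$. These annuli are the ones between a small ball around the bridge and $B_{4R}$.

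A dual circuit around the bridge $j$, restricted to $\Omega_j$, blocks the $j$-th arm. The cleaner route is to use that bridges have diameter $\asymp R/N$: for each $j$, the annuli centered at a point of $(ba)^*_j$ with radii between $R/N$ and $R$ give about $\log_2N$ independent-ish chances, via FKG and the domain Markov property, of a dual circuit disconnecting $(ba)^*_j$ from $\partial B_{4R}$. This yields
\[
\pi_j\le(1-p_{\mathrm{circ}})^{c\log N}=N^{-c'}.
\]
This bound alone only gives $E_j\lesssim N^{-c'}R^{1/2}$, to be compared with the lower bound $(R/N)^{1/2}$, so we need $c'>1/2$, which is not guaranteed.

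Hence we use disjointness: $\sum_j E_j\le O(R^{1/2})$, which we obtain by applying the same exploration/Proposition~\ref{prop:upper-bound} argument to all the $\Omega_j$ simultaneously, counting each far boundary point at most once. Then some $j$ has $E_j\le O(R^{1/2})/N$. Combining this averaging with the factor $N^{-c'}$ gives $E_j\le N^{-1-c'}R^{1/2}$ for that $j$. Choosing $N=N_0(c_0)$ large then makes $E_j\ll (R/N)^{1/2}$.

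The main obstacle is the rigorous decoupling behind $\sum_jE_j=O(R^{1/2})$ together with the $N^{-c'}$ gain for the same $j$, since the measures $\mathbb{P}^{0/1}_{\Omega_j}$ live on different domains. I would handle this by comparing each $\mathbb{P}^{0/1}_{\Omega_j}$ with the measure on $\cD$ with the bridges wired, using monotonicity.
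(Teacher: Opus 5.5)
There is a genuine gap in how you control the ``far'' contribution. Your bound $E_j\le\pi_j\cdot O(R^{1/2})$ rests on the claim that the expected number of shore points of $\Omega_j$ outside $B_{4R}$ connected to a wired $\partial B_{4R}$ is $O(R^{1/2})$ ``by comparison with Proposition~\ref{prop:upper-bound}''. But these points lie on $\partial\cD$, which is an arbitrary rough and arbitrarily long curve. Proposition~\ref{prop:upper-bound} only counts one-arm events on \emph{special cuts}. Covering the region by Dobrushin domains with special-cut free arcs cannot help, since the shore points themselves are what is being counted.

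The claim is in fact false. Suppose the shore crosses $B_{5R}\smallsetminus B_{4R}$ as a comb with $\asymp R$ radial teeth at mutual distance $O(1)$, or simply runs along $\partial B_{4R}$ from outside. Then $\asymp R$ shore points lie within distance $O(1)$ of the wired circle, and each is connected to it with probability bounded below. The same objection applies to $\sum_jE_j=O(R^{1/2})$. Note also that this bound, if available, would already suffice by averaging alone, since $N^{-1}\ll N^{-1/2}$; multiplying it by an extra factor $N^{-c'}$ is not a meaningful step.

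The missing idea is to \emph{discard} the rough far shore by monotonicity rather than estimate it. The paper joins $M$ equidistributed points on the circle of radius $\frac92R$ by special cuts and loop-erases them to a simple loop $\gamma'=\bigcup_k\gamma'_k$, which bounds a domain $B'_{5R}\subset B_{5R}$. It then takes $\Omega'$ to be the main component of $\Omega\cap B'_{5R}$, which is again a `bridge-shore' subdomain. By monotonicity, $\mathbb{P}^{0/1}_\Omega[v^\bullet(z)\xleftrightarrow{B_{5R}\,}(ba)^*]\ge\mathbb{P}^{0/1}_{\Omega'}[v^\bullet(z)\leftrightarrow(ba)^*]$. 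Applying Proposition~\ref{prop:lower-bound-w/o-legs} in $\Omega'$, the only error term is the sum over the new free arcs $\gamma'_k$, and these \emph{are} special cuts. Proposition~\ref{prop:upper-bound}, applied in Dobrushin domains $\Lambda'_k$ of size $O(R/M)$ disjoint from $B_{4R}$, bounds this error by $\mathbb{P}^{0/1}_{\Omega'}[\operatorname{Cross}_{2R,4R}(\Omega)]\cdot O(M^{1/2}R^{1/2})$.

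The second problem is the decay rate of the crossing probability. As you noticed, the $\log N$ nested annuli only give $N^{-c'}$ with uncontrolled $c'$. The correct mechanism is a product bound, in the direction you started with disjointness. The events that $\Omega'_j\cap(B_{4R}\smallsetminus B_{2R})$ is \emph{not} crossed are decreasing and supported in disjoint domains, and a single dual circuit in $B_{4R}\smallsetminus B_{2R}$ realizes all of them simultaneously. By monotonicity in boundary conditions, FKG, and Theorem~\ref{thm:RSW-Mahfouf} for the dual model, one gets
\[
\prod_{j=1}^N\big(1-\mathbb{P}^{0/1}_{\Omega'_j}[\operatorname{Cross}_{2R,4R}(\Omega_j)]\big)\,\ge\,p_{\mathrm{circ}}.
\]
Hence some $j$ has crossing probability at most $1-p_{\mathrm{circ}}^{1/N}\le|\log p_{\mathrm{circ}}|/N$. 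For this $j$ the error term $O(N^{-1}M^{1/2}R^{1/2})$ is negligible against the lower bound $\mathrm{cst}\cdot(R/N)^{1/2}$, once $M$ is fixed (independently of $N$) and $N$ is then taken large. This $N^{-1}$ versus $N^{-1/2}$ comparison closes the argument, not a boundary-arm estimate on the rough shore.
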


\begin{proof} Choose a sufficiently large~$M\in\mathbb{N}$ and let $a_1,a_2,\ldots,a_M$ be equidistributed points on the circle of radius~$\frac92R$ centered at the origin. Denote by~$\gamma_k$ the special cut going from~$a_k$ to~$a_{k+1}$ (we set~$a_{M+1}:=a_1$) and note that~$\diam(\gamma_k)=O(M^{-1}R)$; see Section~\ref{subsec:cuts} for a discussion. These special cuts may have complicated local geometry and, in particular, nearby~$\gamma_k$ can intersect each other so that the closed curve~$\bigcup_{k=1}^M \gamma_k$ has many local self-intersections. Denote by~$\gamma'$ a simple loop obtained by the chronological erasure (say, starting from~$a_0$) of microscopic loops in this closed curve. (We stop the procedure when a loop surrounding the origin is found and discard the initial part of the curve that goes from~$a_0$ to the hitting point.) It is easy to see that $\gamma'=\bigcup_{k=1}^M\gamma_k'$, where each $\gamma_k'$ is a \emph{subsegment} of~$\gamma_k$. {Indeed, erasing a loop from a curve cannot create an appearance of parts of the same~$\gamma_k$ at two difference places, so this property is preserved at every step.} (It may happen that~$\gamma_k'=\varnothing$ for some~$k$ if~$\gamma_k$ was completely erased during the loop-erasure procedure described above.)

Denote by~$B'_{5R}$ the discrete domain bounded by~$\gamma'$ and let~$\Omega'$ be the main (that is, adjacent to the {~$(ba)_\Omega=(ba)^*$}) connected component of~$\Omega\cap B'_{5R}$, where~$\Omega=\Omega_j$ is one of the `bridge-shore' subdomains of~$\cD$ constructed in Proposition~\ref{prop:many-bridges}. Note that~$\Omega'$ is a `bridge-shore' subdomain of~$\cD'$. Using the monotonicity with respect to boundary conditions and Proposition~\ref{prop:lower-bound-w/o-legs} applied in ~$\Omega'$ we obtain a lower bound
\begin{align}
\sum_{z\in(a_0b_0)_{ \Omega}}\mathbb{P}_\Omega^{0/1}\big[v^\bullet(z)\xleftrightarrow{B_{5R}\,}{ (ba)^*}\big]\,&\ge\!\!\!
\sum_{z\in(a_0b_0)_{ \Omega}\cap B'_{5R}}\!\!\!\mathbb{P}_{\Omega'}^{0/1}\big[v^\bullet(z)\leftrightarrow {(ba)^*}\big]
\notag\\
&\ge\,\mathrm{cst}\cdot R^{\frac12}\ -\!\!\!\!\!\!\sum_{z\in \partial B'_{5R}\cap\partial\Omega'}\!\!\!\mathbb{P}_{\Omega'}^{0/1}\big[v^\bullet(z)\leftrightarrow{ (ba)^*}\big].
\label{eq:x-lower-w/o-far-i}
\end{align}
Thus, it is sufficient to estimate the last sum from above. 

Recall that the boundary of~$B'_{5R}$ is composed of special cuts. Let~$\Lambda'_k$ be a Dobrushin domain of diameter~$O(M^{-1}R)$ with the free arc~$\gamma'_k$ to which one can apply Proposition~\ref{prop:upper-bound}. (E.g., consider a domain bounded by~$\gamma'_k$, two straight segments of length~$M^{-1}R$ perpendicular to the direction of~$\gamma'_k$ and its translation in this direction.) We emphasize that we do \emph{not} require that~$\Lambda'_k\subset B'_{5R}$; however, it is clear that $\Lambda'_k\cap B_{4R}=\varnothing$ provided that~$M$ is chosen large enough. Let~$\lambda'_k$ be the wired arc of~$\Lambda'_k$ and~$\operatorname{Cross}_{2R,4R}(\Omega)$ be the event that~$\Omega\cap (B_{4R}\smallsetminus B_{2R})$ is crossed in the radial direction. Due to the monotonicity with respect to boundary conditions and Proposition~\ref{prop:upper-bound} we have the upper bound
\begin{align}
\sum_{z\in \partial B'_{5R}\cap\partial\Omega'}\mathbb{P}_{\Omega'}^{0/1}[v^\bullet(z)\leftrightarrow {(ba)^*}]
&\,\le\,\mathbb{P}_{\Omega'}^{0/1}\big[\operatorname{Cross}_{2R,4R}(\Omega)\big]\cdot\sum_{k=1}^M\sum_{z\in\gamma'_k}\mathbb{P}_{\Lambda'_k}^{0/1}[v^\bullet(z)\leftrightarrow \lambda'_k]
\notag \\
&\le\, \mathbb{P}_{\Omega'}^{0/1}\big[\operatorname{Cross}_{2R,4R}(\Omega)\big]\cdot O\big(M^{\frac12}R^{\frac12}\big)
\label{eq:x-lower-w/o-far-ii}
\end{align}
as we have~$M$ terms, each of which is~$O((M^{-1}R)^{1/2})$.

It follows from Theorem~\ref{thm:RSW-Mahfouf} for the dual model and the FKG inequality that the probability that \emph{none} of the events~$\operatorname{Cross}_{2R,4R}(\Omega_j)$ happens is at least~$p_\mathrm{circ}$. Since the domains~$\Omega_j$ are disjoint, this can be written as
\[
\textstyle \prod_{j=1}^N\big(1-\mathbb{P}_{\Omega'_j}^{0/1}\big[\operatorname{Cross}_{2R,4R}(\Omega)\big]\big)\,\ge\,p_\mathrm{circ}\,.
\]
Hence, we must have
\begin{equation}
\label{eq:x-lower-w/o-far-iii}
\mathbb{P}_{\Omega'}^{0/1}\big[\operatorname{Cross}_{2R,4R}(\Omega)]\,\le\,1-p_\mathrm{circ}^{\frac1N}
\end{equation}
for at least one of the `bridge-shore' domains~$\Omega=\Omega_j$, $j=1,\ldots,N$. Since the choice of~$M$ made above does not depend on~$N$, we can now combine the estimates \eqref{eq:x-lower-w/o-far-i}--\eqref{eq:x-lower-w/o-far-iii} and obtain the desired lower bound provided that~$N\gg |\log p_\mathrm{circ}|M^{1/2}$.
\end{proof}

\begin{corollary} \label{cor:lower-bound-to-bdry} The following estimate holds for all simply connected discrete domains~$\cD$ with~$0\in\cD$ and $R=\dist(0,\partial\cD)$:
\[
\sum_{z\in\partial\cD}\mathbb{P}_\cD^0\big[v^\bullet(z)\xleftrightarrow{B_{5R}\,}B_{\frac12R}\big]\ge\mathrm{cst}\cdot R^{\frac12},
\]
where $\mathrm{cst}>0$ is a constant that depends on~$c_0$ only.
\end{corollary}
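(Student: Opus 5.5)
Throughout, $\Omega$ denotes the `bridge-shore' subdomain supplied by Proposition~\ref{prop:lower-bound-w/o-far}; the plan is to deduce the corollary from that proposition. Its shore $(a_0b_0)_\Omega$ is a subarc of $\partial\cD$ and its bridge $(ba)^*$ has $\|\cdot\|_{2,1}$-length at least $R/4CN_0$, so by Proposition~\ref{prop:lower-bound-w/o-far}
\[
\sum_{z\in(a_0b_0)_\Omega\cap B_{5R}}\mathbb{P}^{0/1}_\Omega\big[v^\bullet(z)\xleftrightarrow{B_{5R}\,}(ba)^*\big]\ge \mathrm{cst}\cdot R^{\frac12}.
\]
It remains to pass from the Dobrushin measure in $\Omega$, with the wired bridge, to the free measure in $\cD$ and the target set $B_{\frac12R}$. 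The natural route is to condition on the existence of a suitable open cluster in $\cD\smallsetminus\Omega$. This cluster must contain the bridge, or at least a crossing connecting the bridge to $B_{\frac12R}$, and it must induce wired boundary conditions on $(ba)^*$ from the outside.

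Concretely, I will use FKG together with Theorem~\ref{thm:RSW-Mahfouf} to produce, with probability $\ge c>0$ under $\mathbb{P}^0_\cD$, an open path in $(\cD\smallsetminus\Omega)\cap B_{5R}$ that joins $B_{\frac12R}$ to every point of the bridge $(ab)$. Since $(ab)\subset B_{2R}$ has length of order $R$ and stays at distance $\ge\frac12\rho\|b-a\|_{2,1}\gtrsim R/N_0$ from $\partial\cD$, this should follow from a bounded number of applications of Theorem~\ref{thm:RSW-Mahfouf}. I would place $O_{N_0}(1)$ annuli of radius $\asymp\rho R/N_0$ along the bridge and inside the main component of $\cD$ (possible by Definition~\ref{def:bridge}), and add a chain of such annuli to $B_{\frac12R}$, which is possible since $0$ lies in the same component. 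Each annulus carries an open circuit with probability $\ge p_{\mathrm{circ}}$ under free boundary conditions, hence under any boundary conditions by monotonicity. Overlapping circuits, combined with FKG, then give a connected open set $\mathcal{C}$ that surrounds, and hence touches, a wired neighbourhood of the whole bridge. This part requires some care, since the circuits could enter $\Omega$. To handle this, I would explore the circuits from outside $\Omega$: reveal the cluster of $B_{\frac12R}$ in $\cD\smallsetminus\Omega$ together with a layer of $O(1)$ distance around the dual cut $(ba)^*$. The remark after Definition~\ref{def:s-cut}, that primal vertices lie above the cut or below the dual cut up to $O(1)$ exceptions, makes this layer harmless.

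On the event that the revealed cluster wires the whole bridge, the domain Markov property and monotonicity in boundary conditions imply that the conditional law in $\Omega$ dominates $\mathbb{P}^{0/1}_\Omega$. Here the free arc $(a_0b_0)_\Omega\subset\partial\cD$ is kept free, and the ramps have free conditions, which is the worst case. Hence, for each $z$ on the shore,
\[
\mathbb{P}^0_\cD\big[v^\bullet(z)\xleftrightarrow{B_{5R}}B_{\frac12R}\big]\ge c\cdot\mathbb{P}^{0/1}_\Omega\big[v^\bullet(z)\xleftrightarrow{B_{5R}}(ba)^*\big].
\]
Summing over $z$ and dropping the other boundary points yields the claim for $R$ large. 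Small $R$ are trivial, since one can take a single boundary point adjacent to a short path.

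The main obstacle is the conditioning step. The wiring event must be measurable with respect to the configuration outside $\Omega$, and it must be increasing so that FKG combines the circuit events. I will first try to define it as ``the open cluster of $B_{\frac12R}$ in $\cD\smallsetminus\Omega$ contains all primal vertices adjacent to $(ba)^*$''. If that is too strong, I will fall back on a weaker version: wire only a sub-bridge of length $\gtrsim R$, and reapply Proposition~\ref{prop:lower-bound-w/o-far}, with its constants adjusted, to the correspondingly smaller bridge-shore domain.
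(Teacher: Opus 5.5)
The reduction to Proposition~\ref{prop:lower-bound-w/o-far} and the use of circuits from Theorem~\ref{thm:RSW-Mahfouf} are the right ingredients. The conditioning step, however, fails as you set it up.

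\textbf{Why your wiring event is too costly.} The event that the configuration in $\cD\smallsetminus\Omega$ connects \emph{all} primal vertices along $(ba)^*$ to $B_{\frac12R}$ does not have probability bounded below. There are $\asymp L\gtrsim R/N_0$ such vertices. By the finite energy property, each vertex of a well-separated subfamily of size $\asymp L$ can be cut off by closing its $O(1)$ outside edges, with conditional probability $\ge\epsilon>0$ given everything else. So this event has probability $e^{-cR}$, and your bound degenerates to $e^{-cR}R^{\frac12}$.

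The phrase ``surrounds, and hence touches'' is exactly where the argument breaks. Circuits of size $\asymp\rho L$ around the bridge surround it, but they touch only a few of its vertices. Your fallback has the same defect, since a sub-bridge of length $\gtrsim R$ still has $\gtrsim R$ vertices to wire. Moreover, a subarc of $(ab)$ need not be a `bridge' in the sense of Definition~\ref{def:bridge}, because its endpoints need not be close to $\partial\cD$. What forces you into this dead end is requiring the wiring event to be measurable with respect to $\cD\smallsetminus\Omega$.

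\textbf{The missing idea: the bridge never needs to be wired.} Let $\gamma=(ba)^*$.
\begin{itemize}
\item By Theorem~\ref{thm:RSW-Mahfouf} and FKG, with probability bounded below there is an open circuit in the annulus $\cD\smallsetminus\gamma$ surrounding $\gamma$ inside its $\frac14\rho L$-neighbourhood. This uses that $\rho$, $C$, $N_0$ are fixed, and it is the event $\mathrm{Circ}(\cD\smallsetminus\gamma)$ used in the paper. Such a circuit stays away from $\partial\cD$, crosses both ramps, and runs through $\Omega$.
\item Condition on the innermost such circuit $\Gamma$. The exterior then has wired boundary conditions on $\Gamma$ and free ones on $\partial\cD$.
\item Since $\Gamma$ separates the shore from $\gamma$, any open path in $\Omega$ from $v^\bullet(z)$ to $(ba)^*$ first hits $\Gamma\cap\Omega$. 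Let $\Omega'$ be the part of $\Omega$ between $\Gamma$ and the shore. Let $\mathbb{P}^{\Gamma}_{\Omega'}$ be the measure on $\Omega'$ that is wired on $\Gamma\cap\Omega$ and free on the rest of $\partial\Omega'$. Then monotonicity gives
\[
\mathbb{P}^{0/1}_\Omega\big[v^\bullet(z)\xleftrightarrow{B_{5R}\,}(ba)^*\big]\,\le\,\mathbb{P}^{\Gamma}_{\Omega'}\big[v^\bullet(z)\xleftrightarrow{B_{5R}\,}\Gamma\cap\Omega\big].
\]
\item The conditional exterior law restricted to $\Omega'$ dominates $\mathbb{P}^{\Gamma}_{\Omega'}$. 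FKG in the exterior then combines $\{v^\bullet(z)\leftrightarrow\Gamma\}$ with $\{\Gamma\leftrightarrow B_{\frac12R}\}$. The latter has conditional probability bounded below, uniformly in $\Gamma$, by another application of Theorem~\ref{thm:RSW-Mahfouf}.
\end{itemize}
Thus the circuit acts as a wired arc lying \emph{closer} to the shore than the bridge, which only helps. Summing over $z\in(a_0b_0)_\Omega$ and applying Proposition~\ref{prop:lower-bound-w/o-far} then gives the corollary.
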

\begin{proof} 
Let~$\Omega=\Omega_k$ be the `bridge-shore' subdomain of~$\cD$ from Proposition~\ref{prop:lower-bound-w/o-far}. Denote by~$\gamma={ (ba)^*}$ the `bridge' part of its boundary. {Recall that according to Proposition~\ref{prop:many-bridges} and} the definition of `bridge-shore' subdomains (see Section~\ref{subsec:bridge-shore}) $\gamma$ lies in the ${ \frac12\rho R/4CN_0}$-interior of~$\cD$. 
{ Let~$\mathrm{Circ}({ \cD\smallsetminus \gamma})$ be the event that there is an open circuit in the annulus~$\cD\smallsetminus\gamma$.}
A standard application of the `usual' RSW estimates from Theorem~\ref{thm:RSW-Mahfouf} gives the lower bound
\[
\mathbb{P}^0_\cD\big[\mathrm{Circ}({ \cD\smallsetminus \gamma})\big]\,\ge\,\mathrm{cst}>0,
\]
where $\mathrm{cst}>0$ is a constant that depends on~$c_0$ only since~$\rho$, $C$, and~$N_0$ are already fixed. Moreover, {this circuit surrounding~$\gamma$} is also connected to~$B_{\frac12R}$ with positive probability that is uniformly bounded from below. It now follows from the FKG inequality and the monotonicity with respect to boundary conditions that
\[
\sum_{z\in\partial\cD}\mathbb{P}_\cD^0\big[v^\bullet(z)\xleftrightarrow{B_{5R}\,}B_{\frac12R}\big]\,\ge\,\mathrm{cst}\cdot\!\!\!\! \sum_{z\in (a_0b_0)}\mathbb{P}_\Omega^{0/1}\big[v^\bullet(z)\xleftrightarrow{B_{5R}\,} (ba)\big]
\]
and Proposition~\ref{prop:lower-bound-w/o-far} gives the lower bound~$\mathrm{cst}\cdot R^{1/2}$ for the last sum.
\end{proof}

\subsection{Branching process and the proof of Theorem~\ref{thm:main-theorem}}
\label{subsec:proof-main-thm}
We are now ready to prove our main result.
Given $R>0$ denote by $\widehat{\cD}_R$ the set of all simply connected discrete domains~$\cD$ such that~$0\in\Omega$ and $R_\Omega:=\dist(0,\partial\cD)\le R$. Let
\[
p(R):=\inf_{\cD\in\widehat{\cD}_R}\mathbb{P}^0_\Omega[\partial\cD\xleftrightarrow{B_{5R_\Omega}\,}B_{\frac12R_\Omega}].
\]
{ By definition, $p(R)$ is a non-increasing function of~$R$ and our goal is to prove that~$\lim_{R\to+\infty}p(R)>0$.} Due to the finite energy property of the model this probability is uniformly bounded from below if~$R_\Omega$ is bounded {from above, which means that~$p(R_0)>0$ for each fixed~$R_0$.} Therefore, in order to prove Theorem~\ref{thm:main-theorem} it is sufficient to show that there exist big $m_0\in\mathbb{N}$ and small $p'_*>0$ such that
\begin{equation}
\label{eq:induction-step}
p(R)\,\ge\,\min\{p(2^{-2m_0-2}R),p'_*\}\ \ \text{for all}\ \ R\ge 2^{3m_0}.
\end{equation}
(One then sets~$p_*:=\min\{p'_*,p(2^{3m_0})\}$ and proceeds by induction over scales.) The following proof is an adaption of the argument given in~\cite[Section~5.3]{duminil-manolescu-tassion-fractal} to our setup.

\begin{proof}[Proof of the estimate~\eqref{eq:induction-step}] 
Choose a large~$m\in\mathbb{N}$ and let~$\cD_m\subset\cD$ be an inner approximation of the main connected component of~$\cD\cap B_{6R}$ on the \emph{square grid} with mesh size~$2^{-2m}R$ such that the distance from each boundary point of~$\cD_m$ to~$\partial\cD$ is in~$[2^{-2m}R;\,2\sqrt{2}\cdot 2^{-2m}R]$. (To construct~$\cD_m$, consider the union of all squares such that their eight neighbors lie inside~$\cD\cap B_{6R}$.)
Further, subdivide each boundary segment of $\cD_m$ of length~$2^{-2m}R$ into~$2^m$ subsegments of length~$2^{-3m}R$ and denote by~$a_1,\ldots,a_M$ the endpoints of these segments so that~$\partial\cD_m$ is the closed polyline $(a_1a_2\ldots a_{M+1})$, where we set $a_{M+1}=a_1$ as usual. Note that we have the uniform estimate \mbox{$M\le 2^m\cdot O((2^{2m})^2)=O(2^{5m})$} as we consider only squares inside~$B_{6R}$.

Similarly to the proof of Proposition~\ref{prop:lower-bound-w/o-far}, let~$\gamma_k$ be the special cut going from~$a_k$ to~$a_{k+1}$ and $\gamma'=\bigcup_{k=1}^M\gamma'_k$ be a simple closed curve obtained by a chronological erasure of microscopic loops from the concatenation of all~$\gamma_k$. (Each~$\gamma'_k$ is a \emph{subsegment}, possibly empty, of~$\gamma_k$.) Let~$\cD'$ be the discrete domain bounded by~$\gamma'$. Due to Corollary~\ref{cor:lower-bound-to-bdry} applied in~$\cD'$ we have the lower bound
\[
\sum_{k=1}^M\sum_{z\in\gamma_k'}\mathbb{P}_{\cD'}^0\big[v^\bullet(z)\xleftrightarrow{B_{5R'}\,}B_{\frac12R'}\big]\,\ge\,\mathrm{cst}\cdot R^{\frac12},
\]
where~$R'=\dist(0,\partial\cD')$; note that $R'\ge R-O(2^{-2m}R)$. 

As in the proof of Proposition~\ref{prop:lower-bound-w/o-legs}, let~$\Lambda'_k$ be a Dobrushin domain of diameter~$O(2^{-3m}R)$ with the free arc~$\gamma'_k$ to which one can apply Proposition~\ref{prop:upper-bound}. (E.g., consider the domain bounded by~$\gamma'_k$, two straight segments of length~$2^{-3m}R$ perpendicular to the direction of~$\gamma'_k$ and its translation in this direction.) We do \emph{not} require that~$\Lambda'_k\subset \cD'$; however, it is clear that $\Lambda'_k\subset\cD$ provided that~$m$ is chosen large enough. Denote by~$\lambda'_k:=\partial\Lambda'_k\smallsetminus\gamma'_k$ the wired arc of~$\Lambda'_k$. Due to the monotonicity with respect to boundary conditions, for each~$z\in\gamma'_k$ we have the estimate
\begin{align*}
\mathbb{P}_{\cD'}^0\big[v^\bullet(z)\xleftrightarrow{B_{5R'}\,}B_{\frac12R'}\big] \,&\le\, \mathbb{P}^{0/1}_{\Lambda'_k}[v^\bullet(z)\leftrightarrow \lambda'_k]\cdot \mathbb{P}_{\cD'}^0[\Lambda'_k\cap\cD'\xleftrightarrow{B_{5R'}\,}B_{\frac12R'}]\,.
\end{align*}
Trivially, $\mathbb{P}_{\cD'}^0[\Lambda'_k\cap\cD'\xleftrightarrow{B_{5R'}\,}B_{\frac12R'}]\le \mathbb{P}_{\cD}^0[\Lambda'_k\xleftrightarrow{B_{5R'}\,}B_{\frac12R'}]$. Therefore, the summation over~$z\in\gamma'_k$ and Proposition~\ref{prop:upper-bound} applied in~$\Lambda'_k$ give the upper bound
\[
\sum_{k=1}^M\sum_{z\in\gamma_k'}\mathbb{P}_{\cD'}^0\big[v^\bullet(z)\xleftrightarrow{B_{5R'}\,}B_{\frac12R'}\big]\,\le\, O\big((2^{-3m}R)^{\frac12}\big)\cdot\sum_{k=1}^M \mathbb{P}_{\cD}^0[\Lambda'_k\xleftrightarrow{B_{5R'}\,}B_{\frac12R'}]\,.
\]
Hence,
\[
\sum_{k=1}^M \mathbb{P}_{\cD}^0[\Lambda'_k\xleftrightarrow{B_{5R'}\,}B_{\frac12R'}]\,\ge\,\mathrm{cst}\cdot 2^{\frac32m},
\]
where $\mathrm{cst}>0$ depends on~$c_0$ only and, in particular, does not depend on~$m$.

Following~\cite{duminil-manolescu-tassion-fractal} we say that the domain $\Lambda'_k$ is \emph{activated} if it is connected to~$B_{\frac12R'}$ inside~$B_{5R'}$. Let
\[
\xi:=\#\{1\le k\le M:\ \Lambda'_k\ \text{is activated}\},
\]
one can then write the last lower bound as
\begin{equation}
\label{eq:expectation-xi}
\mathbb{E}^0_\cD[\,\xi\,]\,\ge\,\mathrm{cst}\cdot 2^{\frac32m}.
\end{equation}
Now let~$z_k$ be the point lying on the shortest straight segment connecting~$\gamma'_k$ and~$\partial\cD$ such that
\[
r_k:=\dist(z_k,\partial\cD)=\tfrac1{12}\dist(\gamma'_k,\partial\cD).
\]
If~$m$ is large enough, then we have $r_k\le \frac14\cdot 2^{-2m}R$ and~$R-R'\ge\frac78\cdot 2^{-2m}R$. In particular, this implies that
\[
5R'+\dist(\gamma'_k,z_k)+5r_k\,=\,5R'+16r_k\,\le\, 5R\,.
\]
Therefore, the FKG inequality implies that for each~$k=1,\ldots,M$ one has
\begin{align}
\mathbb{P}_\cD^0\big[B_{\frac12R}\xleftrightarrow{B_{5R}\,}\partial\cD\big]\,&\ge\,\mathbb{P}_\cD^0\big[\Lambda'_k\ \text{is activated}\big]\cdot p_\mathrm{circ}^3\cdot \mathbb{P}_\cD^0\big[B_{\frac12r_k}(z_k)\xleftrightarrow{B_{5r_k}(z_k)\,}\partial\cD\big] \notag \\
&\ge\,\mathbb{P}_\cD^0\big[\Lambda'_k\ \text{is activated}\big]\cdot p_\mathrm{circ}^3\cdot p(2^{-2m-2}R)\,. 
\label{eq:x-one-activated}
\end{align}
{ The factor~$p_\mathrm{circ}^3$ comes from requiring the existence of a circuit in the annulus $B_{r_k}(z_k)\smallsetminus B_{\frac12r_k}(z_k)$ and two additional circuits centered at points lying at distances at distances $\frac14\dist(\gamma'_k,\partial\cD)$ and $\frac34\dist(\gamma'_k,\partial\cD)$ from~$\partial\cD$ on the shortest segment connecting~$\gamma'_k$ and~$\partial\cD$ that are needed to connect $B_{\frac12r_k}(z_k)$ and the crossing going from~$\Lambda'_k$ to~$B_{\frac12R}$.}

The key part of the argument is to benefit from the expected existence of \emph{many} activated~$\Lambda'_k$ that is suggested by the lower bound~\eqref{eq:expectation-xi}. To this end, note that there is a small universal constant~$\alpha>0$ such that the following holds:
\begin{center}
each subset~$\{z_{k_1},\ldots, z_{k_n}\}\subset\{z_1,\ldots,z_M\}$ contains a sub-subset consisting\\
of at least~$\lceil \alpha 2^{-m} n\rceil$ points lying at distance $\ge 3\cdot 2^{-2m}R$ from each other.
\end{center}
Indeed, each edge of the square grid with mesh size~$2^{-2m}R$ has $O(1)$ edges of the same grid at $O(2^{-2m}R)$ distance, so no~$z_k$ has more than~$O(2^m)$ other points at~$O(2^{-2m}R)$ distance from it, $2^m$ per each of the $O(1)$ aforementioned edges of the grid. (One can explicitly take~$\alpha=\frac{1}{100}$.) Let us now use the estimate~\eqref{eq:expectation-xi} and find~$m_0\in\mathbb{N}$ such that
\begin{equation}
\label{eq:expectation-xi-bis}
\mathbb{E}[\xi]\,\ge\,\alpha^{-1}2^{m_0+1}p_\mathrm{circ}^{-3}\,.
\end{equation}

Recall that we have a uniform upper bound~$\xi\le M\le M_0=O(2^{5m_0})$. {Suppose} first that
\[
p_\mathrm{circ}^3\cdot p(2^{-2m_0-2}R)\,\ge\, \alpha^{-1} 2^{m_0-1}M_0^{-1}. 
\]
In this case it easily follows from~\eqref{eq:x-one-activated} that
\begin{align}
\mathbb{P}_\cD^0\big[B_{\frac12R}\xleftrightarrow{B_{5R}\,}\partial\cD\big]\, &\ge\, \mathbb{P}_{\cD}^0\,[\text{$\Lambda'_k$ is activated for some $k$}\,]\cdot p_\mathrm{circ}^3\cdot p(2^{-2m_0-2}R)
\notag\\
& \ge\,\frac{\mathbb{E}_{\cD}^0[\xi]}{M_0}\cdot \alpha^{-1}2^{m_0-1}M_0^{-1}\,\ge\,p'_*>0
\label{eq:x-p(R)-bound-i}
\end{align}
for an appropriately chosen constant~$p'_*\ge M_0^{-2}$. 

Now {suppose} that
\[
p_\mathrm{circ}^3\cdot p(2^{-2m_0-2}R)\,\le\, \alpha^{-1} 2^{m_0-1}M_0^{-1},
\]
which is the key part of the argument. In this case we can benefit from the fact that the two discs $B_{5r_k}(z_k)$ and $B_{5r_\ell}(z_\ell)$ are disjoint if $\dist(z_k,z_\ell)\ge \frac52\cdot 2^{-2m}R$. Due to monotonicity with respect to boundary conditions, this allows us to improve the lower bound~\eqref{eq:x-one-activated} by considering several activated $\Lambda'_k$ that stay sufficiently far from each other. Namely, we have
\begin{align*}
\mathbb{P}_\cD^0\big[B_{\frac12R}\xleftrightarrow{B_{5R}\,}\partial\cD\big]\,&\ge\, \sum_{n=1}^{M_0}\mathbb{P}_\cD^0[\xi=n]\big(1-(1-p_\mathrm{circ}^3\cdot p(2^{-2m_0-2}R))^{\lceil \alpha 2^{-m_0}n\rceil}\big) \\
&\ge\,\sum_{n=1}^{M_0}\mathbb{P}_\cD^0[\xi=n]\cdot\tfrac12\lceil \alpha 2^{-m_0}n\rceil p_\mathrm{circ}^3\cdot p(2^{-2m_0-2}R) \\
&\ge\,\mathbb{E}_{\cD}^0[\xi]\cdot \alpha 2^{-m_0-1}p_\mathrm{circ}^3\cdot p(2^{-2m_0-2}R)\,\ge\,p(2^{-2m_0-2}R)\,,
\end{align*}
where we used~\eqref{eq:expectation-xi-bis} and the inequality~$(1-q)^n \le e^{-qn}\le 1-\frac12qn$ for~$qn\le 1$. Together with the estimate~\eqref{eq:x-p(R)-bound-i} this completes the induction step~\eqref{eq:induction-step} and the proof of Theorem~\ref{thm:main-theorem}.
\end{proof}

\section{Proofs of {Proposition~\ref{prop:upper-bound}, Proposition~\ref{prop:lower-bound}, and Proposition~\ref{prop:lower-bound-w/o-legs}}}
\label{sec:fermionic-proofs}
\setcounter{equation}{0}

This section relies upon a standard tool used for the analysis of the planar Ising model -- fermionic observables viewed as discrete holomorphic functions -- and uses the notation introduced in~\cite{chelkak-semb} in the s-embeddings context. The proof of Proposition~\ref{prop:lower-bound} given below goes by contradiction, which in particular requires us to work with \emph{rescaled} s-embeddings~$\cS^\delta$ and to study subsequential limits of fermionic observables defined on these rescaled embeddings as~$\delta\to 0$. In what follows we always assume that $\cS^\delta$ have the property~\UnifD, i.e., that the lengths of all edges of $\cS^\delta$ are uniformly comparable to~$\delta$ (rather than to~$1$) and all quads of~$\cS^\delta$ have angles that are uniformly bounded away from~$0$. 

\subsection{Fermionic observables: s-holomorphicity, closed differential forms, and positivity on special cuts} \label{subsec:ferm-obs}
Given a Dobrushin domain~$\Omega^\delta$ on~$\cS^\delta$ with free boundary conditions on~$(ab)_{\Omega^\delta}$ and wired boundary conditions on $(ba)_{ \Omega^\delta}$, the complexified fermionic observable is a function $F^\delta:\diamondsuit(\Omega^\delta)\to\C$ defined on quads (including boundary half-quads) that form $\Omega^\delta$. This function satisfies the \emph{s-holomorphicity} condition:
\begin{equation}
\label{eq:Pr=Pr}
\Pr{F^\delta(z)}{\eta_c\R}=\Pr{F^\delta(z')}{\eta_c\R},
\end{equation}
where
\begin{equation}
\label{eq:eta-def}
\eta_c=e^{i\frac\pi4}\exp\big[-\tfrac{i}2\arg(\cS^\delta(v^\bullet)-\cS^\delta(v^\circ))\big]
\end{equation}
and $z$ and $z'$ are the two quads adjacent to an edge $c=(v^\circ v^\bullet)$ of the s-embedding. The values~\eqref{eq:Pr=Pr} can be defined either using Kadanoff--Ceva fermions as it is done in~\cite{chelkak-semb} or, equivalently, by evaluating the expected value of the \emph{signed} probability that the FK-Ising interface from~$a^\delta$ to~$b^\delta$ crosses the edge~$(v^\circ v^\bullet)$ similarly to~\cite{smirnov-icm06,chelkak-smirnov-ising,duminil-parafermion} and dividing this value by~$(-i(\cS^\delta(v^\bullet)-\cS^\delta(v^\circ)))^{1/2}$. 
The fact that thus defined values on four edges of a quad $z$ can be written as the four projections of a single complex number~$F^\delta(z)$ onto directions~\eqref{eq:eta-def} follows from the so-called propagation equation for Kadanoff--Ceva fermions, see~\cite{chelkak-semb} for details. 

Let us now list (some of) the key properties of fermionic observables~$F^\delta$ that we need below. Again, we refer the interested reader to~\cite{chelkak-semb} for more details.

\begin{proposition} \label{prop:closed-forms} Let $F^\delta:\diamondsuit(\Omega^\delta)\to\C$ be an s-holomorphic function on~$\Omega^\delta$. Then,\\[2pt]
(i) One can consistently define a closed differential form $F^\delta d\cS^\delta+i\overline{F}{}^\delta d\cQ^\delta$ on edges~$(v^\circ v^\bullet)$ of quads in~$\Omega^\delta$ by using any of the two values $F^\delta(z)$ or $F^\delta(z')$ on quads~$z,z'$ adjacent to~$(v^\circ v^\bullet)$ in order to define the increment along this edge.\\[2pt]
(ii) One can similarly define a closed differential form $\frac12(\Im[(F^\delta)^2d\cS^\delta]+|F^\delta|^2d\cQ^\delta)$. The primitive~$H^\delta$ of this differential form satisfies the maximum principle.\\[2pt]
(iii) If $F^\delta$ is the basic fermionic observable in a Dobrushin domain~$\Omega^\delta$, then one can choose the global additive constant in the definition of~$H^\delta$ so that $H^\delta=0$ on the free boundary arc~$(ab)_{\Omega^\delta}$ and~$H^\delta=1$ on the wired boundary arc~$(ba)_{ \Omega^\delta}$. In particular, one has $0\le H^\delta\le 1$ everywhere in~$\Omega^\delta$.
\end{proposition}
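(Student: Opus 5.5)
The plan is to reduce all three claims to edge-by-edge identities on a single tangential quad. The basic observation is that for $c=(v^\circ v^\bullet)$ with $d\cS^\delta=\cS^\delta(v^\bullet)-\cS^\delta(v^\circ)=|d\cS^\delta|e^{i\varphi}$ and $d\cQ^\delta=|d\cS^\delta|$, the definition \eqref{eq:eta-def} gives $\eta_c^2=ie^{-i\varphi}$. Together with $\Pr{F}{\eta_c\R}=\tfrac12(F+\eta_c^2\overline F)$ this yields
\[
F\,d\cS^\delta+i\overline F\,d\cQ^\delta=|d\cS^\delta|e^{i\varphi}(F+\eta_c^2\overline F)=2|d\cS^\delta|e^{i\varphi}\Pr{F}{\eta_c\R}.
\]
So the increment along $c$ depends only on the projection of $F$ onto $\eta_c\R$. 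By \eqref{eq:Pr=Pr} this projection is the same for the two quads $z,z'$ adjacent to $c$, which gives the consistency in~(i).

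For closedness in~(i), I would note that in the simply connected $\Omega^\delta$ every cycle of the graph formed by the edges $(v^\circ v^\bullet)$ is a sum of boundaries of quads. Around a single quad $z$, using the same value $F(z)$ on all four edges, the circulation is $F(z)\oint d\cS^\delta+i\overline{F(z)}\oint d\cQ^\delta=0$. This holds because $\cS^\delta$ and $\cQ^\delta$ are single-valued on the vertices of $z$, which is exactly the consistency of $\cQ$ around a tangential quad.

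For the form in~(ii) the same bookkeeping applies. Writing $e^{i\varphi}=i\overline\eta_c^{\,2}$ and $w=\overline\eta_cF$, we get
\[
\Im[F^2d\cS^\delta]+|F|^2d\cQ^\delta=|d\cS^\delta|\big(\Re(w^2)+|w|^2\big)=2|d\cS^\delta|(\Re w)^2=2|d\cS^\delta|\,|\Pr{F}{\eta_c\R}|^2.
\]
Hence the increment is again well defined by \eqref{eq:Pr=Pr}, and it is closed around each quad by the same argument as in~(i). It is moreover nonnegative when going from $v^\circ$ to $v^\bullet$, so $H^\delta(v^\bullet)\ge H^\delta(v^\circ)$ for neighbours.

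The maximum principle is the step I expect to be the main obstacle. My plan is to express the four values of $H^\delta$ at the vertices of a quad through $F(z)$ and the three real parameters of the tangential quad. From this I would show that the restriction of $H^\delta$ to $G^\bullet$ is subharmonic and its restriction to $G^\circ$ is superharmonic, each with respect to a Laplacian with positive weights, or at least that no interior primary vertex can be a strict local maximum. Combined with $H^\delta(v^\bullet)\ge H^\delta(v^\circ)$, this shows that the maximum over all vertices is attained at a boundary primary vertex, and symmetrically the minimum at a boundary dual vertex. If the direct computation becomes unwieldy, I would fall back on the corresponding statement in~\cite{chelkak-semb}, which is proved for general s-embeddings.

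For~(iii) I would use the boundary behaviour of the basic observable. On the free arc $(ab)_{\Omega^\delta}$, each boundary half-quad $(v_-^\circ v^\bullet v_+^\circ)$ has the property that the interface crossing probabilities through its two edges coincide up to the signs prescribed by the winding. This forces the two projections of $F$ to have equal squared moduli times edge lengths, so the increments from $v_-^\circ$ to $v^\bullet$ and from $v^\bullet$ to $v_+^\circ$ cancel, and $H^\delta$ is constant on the dual boundary vertices of the free arc. The same argument on the wired arc shows that $H^\delta$ is constant on the primary vertices there. The difference of the two constants equals the increment across the edge $a$. Since the interface passes through $a$ with probability one, that increment is $1$ under the standard normalization of $F$. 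Setting the additive constant so that $H^\delta=0$ on $(ab)$ and $H^\delta=1$ on $(ba)$, the maximum principle from~(ii) then gives $0\le H^\delta\le1$ throughout $\Omega^\delta$.
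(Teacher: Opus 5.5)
Your proposal is correct and follows the paper's route. The per-edge identities $F^\delta d\cS^\delta+i\overline{F}{}^\delta d\cQ^\delta=2|d\cS^\delta|e^{i\varphi}\Pr{F^\delta}{\eta_c\R}$ and $\tfrac12(\Im[(F^\delta)^2d\cS^\delta]+|F^\delta|^2d\cQ^\delta)=|d\cS^\delta|\,|\Pr{F^\delta}{\eta_c\R}|^2$ are explicit versions of the paper's consistency arguments in (i)--(ii), and the maximum principle ultimately rests on \cite[Proposition~2.10]{chelkak-semb}, exactly as in the paper. Your (iii) fleshes out the paper's remark that, for the interface definition of $F^\delta$, everything reduces to the crossing probability being $1$ at $a$ and $b$.

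One caveat: your preferred direct route to the maximum principle would fail, because for general s-embeddings $H^\delta|_{G^\bullet}$ is not subharmonic for any Laplacian with positive weights depending only on the geometry. Indeed, for weights $c_k>0$ and $F^\delta\equiv F_0$ one gets $\sum_k c_k(H^\delta(v^\bullet_k)-H^\delta(v^\bullet))=\tfrac12\big(\Im[F_0^2X]+|F_0|^2Y\big)$ with $X=\sum_kc_k(\cS^\delta(v^\bullet_k)-\cS^\delta(v^\bullet))$ and $Y=\sum_kc_k(\cQ^\delta(v^\bullet_k)-\cQ^\delta(v^\bullet))$. Here $Y<0$ whenever $v^\bullet$ is a strict local maximum of $\cQ^\delta|_{G^\bullet}$, e.g.\ the common $60^\circ$ corner of six congruent kites with right angles at the dual vertices. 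Choosing $\arg F_0$ so that $\Im[F_0^2X]\le 0$ then makes this weighted sum negative. Moreover, excluding only \emph{strict} interior local maxima would not yield the weak maximum principle anyway, so the citation is genuinely needed.
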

\begin{proof} (i) Since $d\cQ^\delta((v^\circ v^\bullet))=|d\cS^\delta((v^\circ v^\bullet))|$ by the definition of s-surfaces, it is easy to see from~\eqref{eq:Pr=Pr} that the expression $F^\delta(z)d\cS^\delta((v^\circ v^\bullet))+ i\overline{F}{}^\delta(z)d\cQ^\delta((v^\circ v^\bullet))$ equals the same expression with $F^\delta(z)$ replaced by~$F^\delta(z')$.\\[2pt]
(ii) The fact that this differential form is well defined can be seen from the identity
\[
\tfrac12(\Im[(F^\delta)^2d\cS^\delta]+|F^\delta|^2d\cQ^\delta)=\tfrac{1}{4i}(F^\delta d\cS^\delta+i\overline{F}{}^\delta d\cQ^\delta)^2\cdot (d\cS^\delta)^{-1}
\]
that holds on each edge~$(v^\circ v^\bullet)$ of $\Omega^\delta$. The fact that the primitives $H^\delta$ of such discrete differential forms appearing in the planar Ising model always satisfy the maximum principle is more subtle. This is proved in~\cite[Proposition~2.10]{chelkak-semb}.\\[2pt]
(iii) This is immediate from the definition of the fermionic observables via Kadanoff--Ceva fermions; see~\cite[Corollary~2.12]{chelkak-semb}. If one instead defines~$F^\delta$ \`a la~\cite{smirnov-icm06,chelkak-smirnov-ising,duminil-parafermion} as the expectation of the signed probability that the FK-Ising interface crosses a given edge~$(v^\circ v^\bullet)$ divided by $(-i(\cS^\delta(v^\bullet)-\cS(v^\circ)))^{1/2}$, then the claim boils down to the fact that these probabilities equal~$1$ on the source and the exit edges~$a,b$ of~$\Omega^\delta$.
\end{proof}

\begin{proposition}\label{prop:observables-boundary} 
Let an s-embedding~$\cS^\delta$ have the property~\UnifD\ and $\Omega^\delta$ be a Dobrushin domain on~$\cS^\delta$. Then, the following holds:\\[2pt]
(i) For each boundary half-quad~$z=(v_-^\circ v^\bullet v_+^\circ)$ {on the free arc~$(ab)_{\Omega^\delta}$ one has}
\[
|F^\delta(z)|\,\asymp\,\delta^{-\frac12}\mathbb{P}^{0/1}_{\Omega^\delta}[v^\bullet\leftrightarrow (ba)_{\Omega^\delta}.
\]
{(We write~$f\asymp g$ if~$f=O(g)$ and~$g=O(f)$ with constants that depend on~$c_0$ only.)}\\[2pt]
(ii) Let~$G^\delta$ be the discrete primitive of the form~$F^\delta d\cS^\delta+i\overline{F}{}^\delta d\cQ^\delta$. For each boundary half-quad~$z=(v_-^\circ v^\bullet v_+^\circ)$ on~$(ab)_{\Omega^\delta}$ {one has}
\[
|G^\delta(v_+^\circ)-G^\delta(v_-^\circ)|\,\asymp\,\delta^{\frac12}\mathbb{P}^{0/1}_{\Omega^\delta}[v^\bullet\leftrightarrow (ba)_{\Omega^\delta}].
\]
Moreover, $\arg(G^\delta(v_+^\circ)-G^\delta(v_-^\circ))=-\arg F^\delta(z)$.\\[2pt]
(iii) In addition, assume that the free arc~$(ab)_{\Omega^\delta}=(ab)$ of~$\Omega^\delta$ is a special cut going in the positive direction (see Section~\ref{subsec:cuts} for the definition). Then, for each half-quad $z$ on~$(ab)$ one has $|F^\delta(z)|\asymp \Re F^\delta(z)>0$ and the same is true for~$G^\delta(v_+^\circ)-G^\delta(v_-^\circ)$.
\end{proposition}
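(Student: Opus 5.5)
The plan is to work directly with the edge definition of the observable. Up to the normalization $(-i(\cS^\delta(v^\bullet)-\cS^\delta(v^\circ)))^{-1/2}$, the projections $\Pr{F^\delta(z)}{\eta_c\R}$ on the edges $c$ of a boundary half-quad $z=(v_-^\circ v^\bullet v_+^\circ)$ on the free arc are signed interface-crossing probabilities. We then combine this with the closed form of Proposition~\ref{prop:closed-forms}(i).

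\emph{Step (i).} On the free arc, the interface from $a^\delta$ to $b^\delta$ passes through both edges $(v_\pm^\circ v^\bullet)$ of $z$ exactly when $v^\bullet$ is connected to the wired arc $(ba)_{\Omega^\delta}$. Given that this happens, the winding of the interface, and hence the complex phase, at these boundary edges is deterministic. So both projections of $F^\delta(z)$ equal $\pm\delta^{-1/2}\mathbb{P}^{0/1}_{\Omega^\delta}[v^\bullet\leftrightarrow(ba)_{\Omega^\delta}]$ times a factor of modulus $\asymp 1$. The lower bound $c_0$ on half-angles in \UnifD\ implies that the two directions $\eta_{c_\pm}$ make an angle bounded away from $0$ and $\pi/2$ modulo $\pi$. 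Therefore a complex number is determined by these two projections with constants depending only on $c_0$, and this gives $|F^\delta(z)|\asymp\delta^{-1/2}\mathbb{P}[\cdots]$.

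\emph{Step (ii).} The increment of $G^\delta$ from $v_-^\circ$ to $v_+^\circ$ is
\[
\textstyle\sum_\pm \pm\bigl(F^\delta d\cS^\delta+i\overline F{}^\delta d\cQ^\delta\bigr)\big((v_\pm^\circ v^\bullet)\bigr).
\]
On each light-like edge with $d\cS=\delta e^{i\beta}$ and $d\cQ=\delta$, one has
\[
Fd\cS+i\overline F d\cQ=2\delta e^{i\beta/2}e^{i\pi/4}\Re[\overline{\eta}F]\cdot(\text{unit}),
\]
that is, the edge increment is $\delta$ times twice the projection, rotated into a fixed direction $\overline{\eta_c}$. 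Since the projection is real and carries a definite sign from step (i), the two increments combine coherently. We compute their sum and show that it equals $\overline{F^\delta(z)}$ times a positive quantity comparable to $\delta$. This uses the tangential-quad geometry ($|v^\bullet-v_\pm^\circ|$ are the two tangent lengths from $v^\bullet$). The identity then yields both the modulus bound $\asymp\delta^{1/2}\mathbb{P}$ and $\arg(G^\delta(v_+^\circ)-G^\delta(v_-^\circ))=-\arg F^\delta(z)$. I expect this trigonometric bookkeeping, and in particular checking that no cancellation occurs, to be the main obstacle.

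\emph{Step (iii).} The boundary condition from step (i) forces $F^\delta(z)$ to lie on a specific line $\ell_z$ determined by the half-quad, e.g.\ $F\in\overline{\eta}\R$ type constraints coming from $H^\delta=0$ and Proposition~\ref{prop:closed-forms}(iii). Concretely, $\Im[(F^\delta)^2d\cS^\delta]+|F^\delta|^2d\cQ^\delta=0$ along the free arc. Writing $d\cS^\delta=|d\cS^\delta|e^{i\beta}$ along $v_-^\circ\to v_+^\circ$ and $d\cQ^\delta$ for the corresponding increment of $\cQ$ gives
\[
\sin(2\arg F+\beta)=-\,d\cQ^\delta/|d\cS^\delta|.
\]
The special cut condition says that the boundary follows a level line of $\Im[\overline\alpha z]-\cQ^\bullet(z)$ going in the positive direction. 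Hence the chord $v_-^\circ v_+^\circ$ satisfies $d\cQ\approx\Im[d\cS]$ with $\Re[d\cS]>0$, and the slope is controlled by $\kappa<1$. Substituting this shows $2\arg F\in(-\pi/2+\epsilon,\pi/2-\epsilon)$ modulo $2\pi$. Choosing the sign of $F$ consistently along the arc, via the global branch fixed by the interface orientation and continuity between neighbouring half-quads, gives $|\arg F|\le\pi/2-\epsilon'$, i.e.\ $\Re F\asymp|F|>0$. By step (ii) the same holds for $\overline{F}$, and hence for $G^\delta(v_+^\circ)-G^\delta(v_-^\circ)$.
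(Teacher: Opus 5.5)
\paragraph{Steps (i) and (ii).} These follow the paper in substance. The paper writes the two projections as $e^{i\pi/4}\cX_\pm^{-1}\,\mathbb{P}$, where $\cX_\pm=(\cS^\delta(v^\bullet)-\cS^\delta(v_\pm^\circ))^{1/2}$ with the convention $\Im[\cX_+\overline{\cX}_-]>0$. It then solves
\[
F^\delta(z)=e^{3\pi i/4}(\overline{\cX}_+-\overline{\cX}_-)\,\mathbb{P}/\Im[\cX_+\overline{\cX}_-]
\]
and obtains $G^\delta(v_+^\circ)-G^\delta(v_-^\circ)=2e^{-3\pi i/4}(\cX_+-\cX_-)\,\mathbb{P}=2\Im[\cX_+\overline{\cX}_-]\,\overline{F^\delta(z)}$, which is exactly the identity you announce in (ii). In (i) you only need the lines $\eta_{c_\pm}\R$ to be non-parallel. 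Their angle is $\phi_{zv^\bullet}\in[c_0,\pi-3c_0]$ and may well equal $\frac\pi2$, so your condition ``bounded away from $\frac\pi2$'' is neither needed nor true.

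\paragraph{Step (iii): the gap.} The relation $\Im[F^2d\cS]+|F|^2d\cQ=0$ cannot select the line on which $F$ lies. On a light-like edge $c=(v^\circ v^\bullet)$ the increment of $H^\delta$ from $v^\circ$ to $v^\bullet$ equals $|\cS^\delta(v^\bullet)-\cS^\delta(v^\circ)|\,(\Re[\overline{\eta}_cF])^2$. Hence $H^\delta(v_+^\circ)=H^\delta(v_-^\circ)$ only says $|\cX_+|\,|\Re[\overline{\eta}_{c_+}F]|=|\cX_-|\,|\Re[\overline{\eta}_{c_-}F]|$, and it is blind to the relative sign of the two projections. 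Correspondingly, $\sin(2\arg F+\beta)=-d\cQ/|d\cS|$ has two roots for $2\arg F$ modulo $2\pi$, and no geometric information about the step excludes the wrong one.

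Take the kite half-quad $v_\pm^\circ=\pm1$, $v^\bullet=iy$ (a horizontal step with $\beta=0$, $d\cQ=0$). The equation reads $\sin 2\arg F=0$. The true value is $F>0$, but $F\in i\R$, where $\Re F=0$, satisfies the same constraint. More generally, if $d\cQ=\Im d\cS$ and $\Re d\cS>0$, the roots are $\arg F\equiv-\beta$ (true) and $F\in i\R$ (spurious). Even on the true root $2\arg F=-2\beta$ can approach $\pm2\arcsin\kappa$, so your interval should read $(-\pi+\epsilon,\pi-\epsilon)$, not $(-\frac\pi2+\epsilon,\frac\pi2-\epsilon)$.

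\paragraph{What is missing.} You need the sign information from the definition of the observable. You invoke it in (i) (``the winding is deterministic'') but then trade it for a quadratic relation. It can be fed back in as follows. Write $\Delta G:=F\,d\cS+i\overline F\,d\cQ$. The statement ``$\Delta G=c\,\overline F$ with $c\in\R$'' is equivalent to the $H$-relation, with $c=\Re[F^2d\cS]/|F|^2=|d\cS|\cos(2\arg F+\beta)$. So the true root is the one with $c>0$, namely $2\arg F\equiv-\beta-\arcsin(d\cQ/|d\cS|)$. The fact $c=2\Im[\cX_+\overline{\cX}_-]>0$ is precisely where the relative sign of the projections enters, and it is what your ``positive quantity'' in (ii) silently requires.

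This root is $\equiv\pi$ exactly when $d\cS-i\,d\cQ$ is a negative real number, i.e. when the step $v_-^\circ\to v_+^\circ$ points backward in the S-graph $\cS^\delta-i\cQ^\delta$. That is why the paper's geometric input is $|\arg(d\cS-i\,d\cQ)|\le\pi-m\varepsilon$ together with $|\arg d\cS|\le\frac{3\pi}2-m\varepsilon$. The paper turns this into $|\arg(e^{-3\pi i/4}(\cX_+-\cX_-))|\le\frac\pi2-\varepsilon$ via the inverse Zhukovsky computation in cases (a)--(c).

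\paragraph{Your geometric input.} ``$d\cQ\approx\Im d\cS$ with $\Re d\cS>0$'' is not what a discrete special cut gives step by step. Being $O(\delta)$-close to the level line says nothing about a single step of size $\asymp\delta$. Steps with $\Re d\cS\le0$ are not excluded; the paper's cases (b) and (c) exist to handle them. Likewise, the overall sign of $F$ does not follow from ``continuity between neighbouring half-quads'', which is not available for a discrete observable. It has to be read off from the definition, i.e.\ from the winding of the interface along the boundary.
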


\begin{remark} The `positivity' property (iii) is the key motivation for the construction of special cuts from~\cite[Section~5]{chelkak-semb} that may look very artificial otherwise. 
\end{remark}

\begin{proof} (i) Let~$\cX_\pm:=(\cS^\delta(v^\bullet)-\cS^\delta(v_\pm^\circ))^{1/2}$, where the two square roots are chosen so that~$\Im[\cX_+\overline{\cX}_-]>0$. It follows from the definition of the fermionic observable in a Dobrushin domain that the projections~\eqref{eq:Pr=Pr} of the value~$F(z)$ on the directions~\eqref{eq:eta-def} associated with the two edges $(v_\pm^\circ v^\bullet)$ of the half-quad~$z$ equal 
\[
e^{i\frac\pi 4}\cX_\pm^{-1}\cdot \mathbb{P}^{0/1}_{\Omega^\delta}[v^\bullet\leftrightarrow (ba)_{\Omega^\delta}].
\]
A simple computation (see also~\cite[Lemma~5.3]{chelkak-semb}) shows that 
\begin{equation}
\label{eq:F=via-Xpm}
F^\delta(z)=\frac{e^{i\frac{3\pi}4}(\overline{\cX}_+-\overline{\cX}_-)}{\Im[\cX_+\overline{\cX}_-]}\cdot \mathbb{P}^{0/1}_{\Omega^\delta}[v^\bullet\leftrightarrow (ba)_{\Omega^\delta}].
\end{equation}
As~$\cS^\delta$ has property~\UnifD, each of~$\cX_\pm$ is uniformly comparable to~$\delta^{1/2}$ and, moreover, the first factor is uniformly comparable to~$\delta^{-1/2}$.\\[2pt]
(ii) By definition, one has
\begin{align*}
G^\delta(v_+^\circ)-G^\delta(v_-^\circ)
&=F^\delta(z)\cdot(\cX_-^2-\cX_+^2)+i\overline{F}{}^\delta(z)\cdot(|\cX_-|^2-|\cX_+|^2)\\
&=2e^{-i\frac{3\pi}{4}}(\cX_+-\cX_-)\cdot \mathbb{P}^{0/1}_{\Omega^\delta}[v^\bullet\leftrightarrow (ba)_{\Omega^\delta}],
\end{align*}
where the formula~\eqref{eq:F=via-Xpm} and straightforward algebraic manipulations are used to pass from the first to the second lines. \\[2pt]
(iii) Denote $d\cS^\delta(z):=\cS^\delta(v_+^\circ)-\cS^\delta(v_-^\circ)$ and $d\cQ^\delta(z):=\cQ^\delta(v_+^\circ)-\cQ^\delta(v_-^\circ)$. Also, let
\[
u=\tfrac12(\cS^\delta(v_+^\circ)-\cS^\delta(v_-^\circ)),\quad w=\cS^\delta(v^\bullet)-\tfrac12(\cS^\delta(v_-^\circ)+\cS^\delta(v_+^\circ)).
\] 
(In particular, one has $\cX_\pm=(w\mp u)^{1/2}$.) Due to~\eqref{eq:F=via-Xpm}, we need to prove that there exists a (small) constant $\varepsilon>0$ that depends only on the constant~$c_0$ in the property~\UnifD\ such that $|\arg(e^{-i\frac{3\pi}{4}}(\cX_+-\cX_-))|\le\frac\pi 2-\varepsilon$. A straightforward computation via the inverse Zhukovsky function (see \cite[Corollary~5.4]{chelkak-semb}) shows that this property holds in the following three cases:
\begin{enumerate}\renewcommand\theenumi{\alph{enumi}}
\item $\arg u=\arg (d\cS^\delta(z))\in [-\frac\pi 2+\varepsilon\,;\frac\pi2-\varepsilon]$;
\item $\arg (d\cS^\delta(z))\in (\frac\pi2-\varepsilon\,;\frac{3\pi}2-\varepsilon)$ and $|w+u|-|w-u|\le 2|u|\sin(\arg u+\varepsilon)$, which can be written as~$d\cQ^\delta(z)\le \Im[e^{i\varepsilon}d\cS^\delta(z)]$;
\item $\arg (d\cS^\delta(z))\in (-\frac{3\pi}2+\varepsilon\,;-\frac{\pi}2+\varepsilon)$ and $|w+u|-|w-u|\ge 2|u|\sin(\arg u -\varepsilon)$, which can be written as~$d\cQ^\delta(z)\ge \Im[e^{-i\varepsilon}dS^\delta(z)]$.
\end{enumerate}
Recall that the property~\UnifD\ implies that
\[
|dQ^\delta(z)|\le \kappa|dS^\delta(z)|.
\] 
It is now not hard to see that~(b) and~(c), respectively, hold if we have
\begin{enumerate}\renewcommand\theenumi{\alph{enumi}'}\setcounter{enumi}{1}
\item $\arg (d\cS^\delta(z))\in (\frac\pi2-\varepsilon\,;\frac{3\pi}2-m\varepsilon)$ and $\Im[e^{im\varepsilon}(d\cS^\delta(z)-id\cQ^\delta(z))]\ge 0$;
\item $\arg (d\cS^\delta(z))\in (-\frac{3\pi}2+m\varepsilon\,;-\frac{\pi}2+\varepsilon)$ and $\Im[e^{-im\varepsilon}(d\cS^\delta(z)-id\cQ^\delta(z))]\le 0$;
\end{enumerate}
provided that~$\kappa<(1-\frac1m)^2$ and~$\varepsilon$ is small enough. Indeed, to deduce (b) from (b') one notes that
\[
\Im[(e^{i\varepsilon}-e^{im\varepsilon})dS^\delta(z)]\ge(1-\cos(m-1)\varepsilon)|dS^\delta(z)|\ge (1-\cos m\varepsilon)dQ^\delta(z)
\]
provided that~$\kappa\le \sin^2\frac{(m-1)\varepsilon}{2}/\sin^2\frac{m\varepsilon}{2}$. (The derivation of (c) from (c') is similar.) To summarize, the desired property $|\arg(e^{-i\frac{3\pi}{4}}(\cX_+-\cX_-))|\le\frac\pi 2-\varepsilon$ holds if we have
\[
|\arg(dS^\delta(z))|\le \tfrac{3\pi} 2- m\varepsilon\quad \text{and}\quad |\arg(d\cS^\delta(z)-id\cQ^\delta(z))|\le\pi-m\varepsilon.
\]
For fixed~$m$ and small enough $\varepsilon$, this follows from the construction of special cuts; see Section~\ref{subsec:cuts} above.
\end{proof}

\subsection{Regularity of fermionic observables and the proof of Proposition~\ref{prop:upper-bound}} 
\label{subsec:regularity-of-F}
Recall that s-surfaces~$\Theta^\delta$ associated with s-embeddings~$\cS^\delta$ having property~\UnifD\ are automatically $\kappa$-Lipschitz on distances bigger than $C_0\delta$, where constants $C_0>1$ and $\kappa<1$ depend only on the constant~$c_0$ in the {property} \UnifD. In the terminology of~\cite{chelkak-semb}, this means that~$\cS^\delta$ satisfy assumption~\LipK\kappa{C_0\delta}. Moreover, such s-embeddings automatically satisfy an additional technical assumption~\ExpFat{C_0\delta}{\delta'} from the same paper with~$\delta'=C_0^{-1}\delta$ provided that~$C_0$ is chosen big enough. This allows one to apply to observables~$F^\delta$ the a priori regularity theory discussed in~\cite[Section~2.6]{chelkak-semb}. 

\begin{proposition}
\label{prop:regularity_estimates}
There exist constants~$C_1,C_2>0$ and $\beta>0$ that depend only on {the constant $c_0$ in the property~\UnifD}\ such that for each Dobrushin domain~$\Omega^\delta$ on~$\cS^\delta$ the basic fermionic observable in~$\Omega^\delta$ satisfies the estimates
\begin{align}
\label{eq:F-via-HF-Harnack}
|F^\delta(z)|\ &\le\  C_1(\dist(z,\partial\Omega^\delta))^{-\frac12}\\
\osc_{B(z,r)}(F^\delta(z))\ &\le\  C_2r^\beta(\dist(z,\partial\Omega^\delta))^{-\frac12-\beta}
\label{eq:dF-via-HF}
\end{align}   
for each $r\le\frac12\dist(z,\partial\Omega^\delta)$. (In particular, both estimates are uniform in~$\delta$.)
\end{proposition}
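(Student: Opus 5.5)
The plan is to derive both estimates from the a priori regularity theory of s-holomorphic functions in~\cite[Section~2.6]{chelkak-semb}. As discussed just before the statement, \UnifD\ implies the assumptions \LipK{\kappa}{C_0\delta} and \ExpFat{C_0\delta}{\delta'}. The key input will be the primitive $H^\delta$ from Proposition~\ref{prop:closed-forms}(ii)--(iii), which is uniformly bounded: $0\le H^\delta\le 1$ in~$\Omega^\delta$.

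\textbf{Step 1: the bound~\eqref{eq:F-via-HF-Harnack}.} Fix $z$ and put $d:=\dist(z,\partial\Omega^\delta)$. If $d\le C\delta$, the estimate should follow directly from local bounds: for the basic observable, each projection~\eqref{eq:Pr=Pr} equals a probability divided by a factor of order $\delta^{1/2}$, so $|F^\delta(z)|=O(\delta^{-1/2})=O(d^{-1/2})$. If $d\ge C\delta$, I will invoke the estimate of~\cite[Theorem~2.18]{chelkak-semb}: for any s-holomorphic $F$ on a ball $B(z,r)$ with $r\ge \mathrm{cst}\cdot\delta$, one has $|F(z)|^2\le \mathrm{cst}\cdot r^{-1}\osc_{B(z,r)}H$. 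Applying this with $r=\frac12 d$ and $\osc H^\delta\le 1$ gives $|F^\delta(z)|\le C_1 d^{-1/2}$.

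\textbf{Step 2: the H\"older bound~\eqref{eq:dF-via-HF}.} I will use the companion statement from the same part of~\cite{chelkak-semb}: s-holomorphic functions bounded by $K$ on $B(z,r_0)$ are $\beta$-H\"older there, in the sense that $\osc_{B(z,r)}F\le \mathrm{cst}\cdot K\,(r/r_0)^\beta$ for $r\le r_0/2$. The proof of that statement goes through the fact that $\Re[\overline{\eta}F]$-type functions on S-graphs are martingales of random walks, and uses uniform crossing estimates for these walks, which is exactly where \ExpFat\ enters. I take $r_0=\frac12 d$. Step~1 applied at points of $B(z,\frac12 d)$, each at distance $\ge\frac12 d$ from $\partial\Omega^\delta$, gives $K=O(d^{-1/2})$. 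Substituting yields $C_2 r^\beta d^{-1/2-\beta}$.

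For $r<C\delta$ the left-hand side should be handled trivially by the bound on $|F^\delta|$, possibly after adjusting constants. I do not expect this to be automatic, since $r^\beta d^{-1/2-\beta}$ can be much smaller than $d^{-1/2}$ when $r\ll d$. I would therefore either restrict to $r\ge C\delta$, as \cite{chelkak-semb} does, or use the fact that \ExpFat\ gives H\"older continuity down to scale $\delta'$.

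\textbf{Main obstacle.} I expect the main difficulty to be checking that the cited theorems of \cite{chelkak-semb} are stated with constants depending only on $(\kappa,C_0)$. In particular, they must be uniform in the position of $\Theta^\delta$, since non-flat s-surfaces are allowed here. If they are not, I would redo the Harnack-type argument directly. That argument combines the maximum principle for $H^\delta$ with the positivity of the form $|F|^2d\cQ+\Im[F^2d\cS]$ along S-graph directions, and it would give the bound $|F|^2\lesssim r^{-1}\osc H$ by integrating over a circle of radius $r$.
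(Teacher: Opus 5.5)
Your proposal is correct and follows the paper's argument. The paper obtains~\eqref{eq:F-via-HF-Harnack} from the Harnack-type bound $|F|^2\lesssim r^{-1}\osc H$ of \cite[Corollary~2.20]{chelkak-semb} together with $0\le H^\delta\le 1$, and~\eqref{eq:dF-via-HF} from the H\"older estimate \cite[Theorem~2.17]{chelkak-semb}, which are your Steps~1 and~2. Your worry about radii $r<C\delta$ is harmless: $\osc_{B(z,r)}F^\delta$ is non-decreasing in~$r$ and vanishes when $B(z,r)$ contains no quad other than~$z$, so radii $r\lesssim\delta$ only change the value of~$C_2$.
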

\begin{proof} The first estimate follows from~\cite[Corollary~2.20]{chelkak-semb} and the universal bound $0\le H^\delta\le 1$ discussed above in items~(ii),~(iii) of Proposition~\ref{prop:closed-forms}. The second estimate follows from~\cite[Theorem~2.17]{chelkak-semb}.
\end{proof}

We are now in the position to give a proof of Proposition~\ref{prop:upper-bound}, which follows from Proposition~\ref{prop:observables-boundary}, the closedness of the differential form $F^\delta d\cS^\delta+i\overline{F}{}^\delta d\cQ^\delta$, and the a priori regularity estimate~\eqref{eq:F-via-HF-Harnack}.

\begin{proof}[Proof of Proposition~\ref{prop:upper-bound}] 
By rotating the s-embedding we can assume, without loss of generality, that the special cut~$(ab)=(ab)_{\Omega^\delta}$ goes in the positive direction (see Section~\ref{subsec:cuts} for the definition). It is also convenient to rescale the original s-embedding~$\cS$ and Dobrushin domain~$\Omega$ so that~$(ab)$ {has $\|\cdot\|_{2,1}$-length~$1$.} (However, note that the forthcoming proof does include the passage to the limit~$\delta\to 0$ and thus can be run in the original setup~$\delta=1$ as well.) 

Due to items~(ii) and~(iii) of Proposition~\ref{prop:observables-boundary}, we have
\begin{equation}
\label{eq:x-sumP=int}
\sum_{z\in(ab)}\mathbb{P}^{0/1}_{\Omega^\delta}[v^\bullet(z)\leftrightarrow (ba)_{\Omega^\delta}]\ \asymp\ \delta^{-\frac12}\sum_{z\in (ab)}\Re\big[F^\delta(z)d\cS^\delta(z)+i\overline{F}{}^\delta(z)d\cQ^\delta(z)\big].
\end{equation}
Recall that~$(ab)$ is the graph of a $\kappa(1\!-\!\kappa^2)^{-1/2}$-Lipschitz function. Let
\[
z^\sharp:=z+\tfrac{1}{2} i{\eta}\cdot\min\{\|z-a\|_{2,1};\|z-b\|_{2,1}\}\ \ \text{for}\ \ z\in (ab)
\]
and $(ab)^\sharp$ be a discretization of the curve $\{z^\sharp,\ z\in(ab)\}$. 
Due to our assumptions, the path $(ab)^\sharp$ belongs to~$\Omega^\delta$ and, moreover, for each~$z^\sharp\in (ab)^\sharp$ one has
\[
\dist(z^\sharp,\partial\Omega^\delta)\,\ge\,\mathrm{cst}\cdot {\eta}\min\{\|z-a\|_{2,1};\|z-b\|_{2,1}\},
\]
where $\mathrm{cst}>0$ is a constant that depends {on~$c_0$ only}. Using the closedness of the differential form $F^\delta(z)d\cS^\delta(z)+i\overline{F}{}^\delta(z)d\cQ^\delta$ we can replace the contour of integration in~\eqref{eq:x-sumP=int} by~$(ab)^\sharp$ and apply the estimate~\eqref{eq:F-via-HF-Harnack} to control $|F^\delta(z^\sharp)|$ for~$z^\sharp\in (ab)^\sharp$. Thus,
\[
\sum_{z\in(ab)}\mathbb{P}^{0/1}_{\Omega^\delta}[v^\bullet(z)\leftrightarrow (ba)_{\Omega^\delta}]\ \le\ \mathrm{cst}\cdot \delta^{\frac12}\!\!\!\!\sum_{z^\sharp\in (ab)^\sharp}(\dist(z^\sharp;\partial\Omega^\delta))^{-\frac12}\,=O(\delta^{-\frac12}{\eta^{-\frac12}})
\]
as the second sum discretizes a \emph{convergent} integral and hence is bounded from above by~$O(\delta^{-1})\cdot{O(\eta^{-1/2})}$. Thus, $\sum_{z\in(ab)}\mathbb{P}^{0/1}_{\Omega^\delta}[v^\bullet(z)\leftrightarrow (ba)_{\Omega^\delta}]=O(\delta^{-\frac12}{\eta^{-\frac12}})$. It remains to rescale the s-embedding~$\cS^\delta$ back by the factor $L\asymp \delta^{-1}$.
\end{proof}

\subsection{Scheme of the proof of Proposition~\ref{prop:lower-bound}}
\label{subsec:sketch-lower-bound}
This lower bound is the most technical ingredient of our paper. Let us first present the strategy of its proof before diving into technical details in the next two sections.
\begin{enumerate}\renewcommand\theenumi{\roman{enumi}}
\item The proof goes by contradiction: we assume that there exists a sequence of `bridge-shore' domains for which the uniform lower bound~\eqref{eq:prop-lower-bound} fails. By rescaling, rotating, and shifting them, we can instead work with a sequence of `bridge-shore' domains~$\Omega^\delta$ on s-embeddings~$\cS^\delta$, $\delta\to 0$, such that their free boundary arcs~$(ab)_{\Omega^\delta}=(ab)$ are special cuts going in the positive direction that {have~$\|\cdot\|_{2,1}$-length~$1$ and} approximate the straight segment~$[-\frac12;\frac12]$ in the charts $\cS^\delta-i\cQ^\delta$.
\item Due to the monotonicity with respect to boundary conditions, it is sufficient to prove an estimate similar to~\eqref{eq:prop-lower-bound} for \emph{doubly connected} domains~$\Xi^\delta$ shown in {Fig.~\ref{fig:Xi}.} Viewed in the charts $\cS^\delta-i\cQ^\delta$, these domains are discretizations of the unit disc~$\mathbb{D}$ slit along the {images of the special cuts~$(ab)$ in these charts.} The boundary conditions in~$\Xi^\delta$ are free on the upper side of the slit, wired on the lower side of the slit, and free on the outer boundary.
    
\begin{figure}
\begin{minipage}{0.48\textwidth}
\includegraphics[clip, trim=5.2cm 5cm 4.8cm 12.5cm, width=\textwidth]{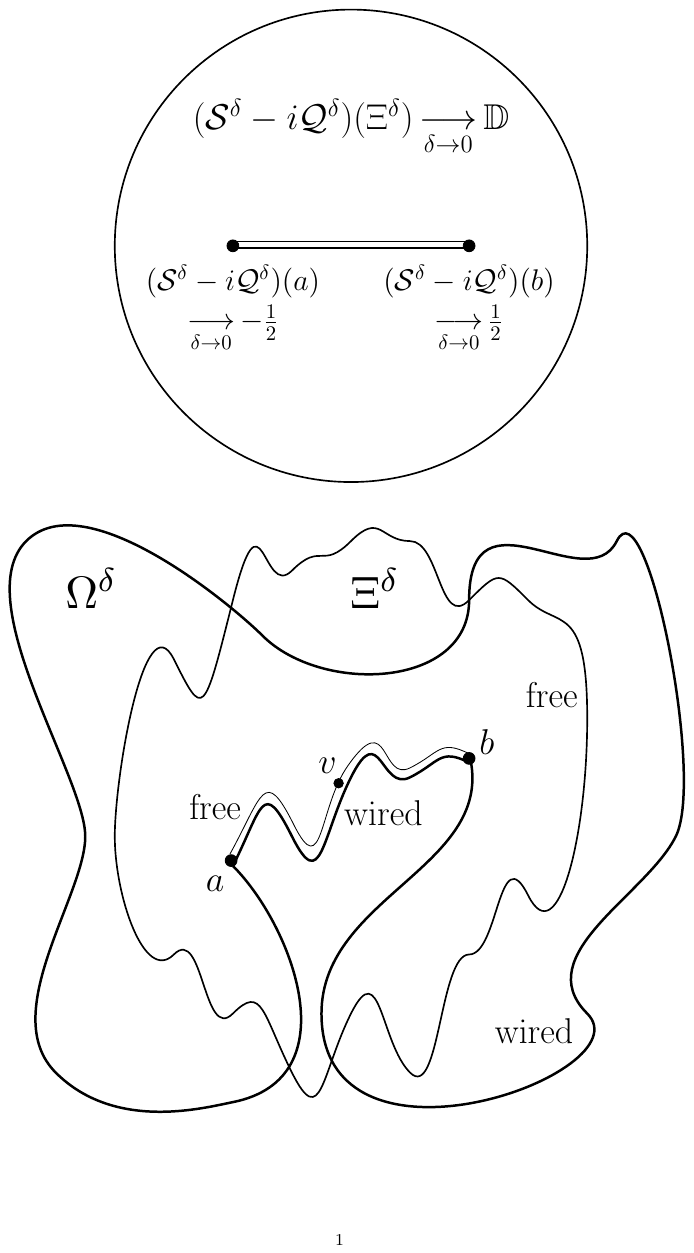}
\end{minipage}
\hskip 0.025\textwidth
\begin{minipage}{0.11\textwidth}
\noindent $\mathop{\xrightarrow{\hspace{36pt}}}\limits^{\displaystyle \cS^\delta-i\cQ^\delta}$
\end{minipage}
\hskip 0.025\textwidth
\begin{minipage}{0.34\textwidth}
\includegraphics[clip, trim=6.85cm 15.8cm 6.6cm 4cm, width=\textwidth]{strongRSW_fig_Xi.pdf}
\end{minipage}
\caption{{\textsc{Left:} A schematic drawing of a Dobrushin domain~$\Omega^\delta$ and a doubly connected discrete domain~$\Xi^\delta$. The free boundary arc~$(ab)_{\Omega^\delta}=(ab)_{\Xi^\delta}=(ab)$ is a special cut. By monotonicity with respect to boundary conditions, the event that a vertex~$v\in (ab)$ is connected to the wired arc~$(ba)_{\Omega^\delta}$ in~$\Omega^\delta$ is more probable than the event that~$v$ is connected to the wired arc~$(ba)_{\Xi^\delta}=(ba)^*$ in~$\Xi^\delta$.
\textsc{Right:} As~$\delta\to 0$, the image $(\cS^\delta-i\cQ^\delta)(\Xi^\delta)$ becomes the unit disc~$\mathbb{D}$ slit along the straight segment~$[-\frac12;\frac12]$, the image of~$(ab)$.
\label{fig:Xi}}}
\end{figure}

\item Recall that the corresponding s-surfaces $\Theta^\delta=(\cS^\delta,\cQ^\delta)$ are $\kappa$-Lipschitz, \mbox{$\kappa<1$,} on scales above~$ C\delta$. By passing to a subsequence one can assume that~$\Theta^\delta$ converge, as~$\delta\to 0$, to a $\kappa$-Lipschitz (in general, non-smooth) surface~$\Theta=\{(z,\vartheta(z))\}\subset\R^{2,1}$ and~$\Xi^\delta$ converge to a doubly connected domain~$\Xi$. Denote by~$\gamma$ the limit of {boundary arcs~$(ab)_{\Omega^\delta}$} as~$\delta\to 0$. If~$w(z)=z-i\vartheta(z)$, then $w(\gamma)=[-\frac12;\frac12]$ and $w(\Xi)=\mathbb{D}\smallsetminus [-\frac12;\frac12]$.
\item Even though~$\Xi^\delta$ is doubly connected (and, contrary to~$\Omega^\delta$, is not a Dobrushin domain), one can define fermionic observables~$F^\delta$ in~$\Xi^\delta$. Moreover, $F^\delta$ have the same `positivity' properties listed in Proposition~\ref{prop:observables-boundary} on the special cut~$(ab)$ as in the case of Dobrushin domains. The corresponding functions~$H^\delta$ are also well defined, satisfy the usual boundary conditions~$H^\delta=0$ on~$(ab)_{\Xi^\delta}=\gamma^+$, by which we mean the upper side of the slit, and~$H^\delta=1$ on~$(ba)_{ \Xi^\delta}=\gamma^-$, by which we mean the lower side of the slit. Moreover, the same universal bound~$0\le H^\delta\le 1$ holds everywhere in~$\Xi^\delta$. 
\item Let~$G^\delta$ be the primitive of the differential form~$F^\delta d\cS^\delta+i\overline{F}{}^\delta d\cQ^\delta$ chosen so that $G^\delta(z_0)=0$, where $z_0$ is a fixed interior point of~$\Xi$. The a priori regularity estimates from Proposition~\ref{prop:regularity_estimates} guarantee the existence of subsequential limits~$F^\delta\to f$, $G^\delta\to g$, $H^\delta\to g$ as~$\delta\to 0$. The convergence is uniform on compact subsets of~$\Xi$ and one has the same relations
    \[
    \textstyle g=\int(fdz+i\overline{f}d\vartheta)\quad \text{and}\quad h=\int\frac12(\Im[f^2dz]+|f|^2d\vartheta)
    \]
    as before. Also, it follows from the uniform estimate~\eqref{eq:F-via-HF-Harnack} that 
\begin{equation}
\label{eq:f=O(d-1/2)}
|f(z)|=O((\dist(z;\partial\Xi))^{-1/2}).
\end{equation} 
In particular, the function~$g$ can be continuously extended to~$\gamma^+$. Moreover, since we assume that the uniform lower bound~\eqref{eq:prop-lower-bound} fails as~$\delta\to 0$, it follows from Proposition~\ref{prop:observables-boundary} that~$g$ must be \emph{constant} on~$\gamma^+$. 
    
\item It is now convenient to view~$g$ as a function of the complex coordinate $w(z)=z-i\vartheta(z)$. It is easy to see that the mapping~$w\mapsto g(w)$ is quasi-analytic inside the domain~$\mathbb{D}\smallsetminus w(\gamma)$. Moreover, $g$ is continuous up to the segment~$w(\gamma^+)=[-\frac12;\frac12]^+$ and is constant there. It also follows from~\eqref{eq:f=O(d-1/2)} that~$g\in W^{1,2-\varepsilon}$ near this segment for each~$\varepsilon>0$. Using standard results from the theory of quasi-analytic mappings it is then not hard to see that~$g$ must be a constant function in~$\Xi$.
\item It follows from the previous step that each subsequential limit $f$ of the functions~$F^\delta$ must vanish, which means that each subsequential limit $h$ of functions~$H^\delta$ has to be constant \emph{inside}~$\Xi$. However, the functions~$H^\delta$ have boundary conditions~$0$ on~$(ab)_{ \Xi^\delta}$ and~$1$ on~$(ba)_{\Xi^\delta}$. The last step of our argument is an analysis, somewhat similar to~\cite[Section~5.5]{chelkak-semb}, of the boundary behaviour of~$H^\delta$ near the upper \emph{and} the lower sides of the slit~$\gamma$, which allows us to prove that, at least on the average,~$H^\delta$ should be close to~$0$ near~$\gamma^+$ and close to $1$ near~$\gamma^-$. (Let us emphasize that, unlike for~$g$, there is no straightforward way to compare the boundary values of~$H^\delta$ and~$h$ as the a priori estimates~\eqref{eq:F-via-HF-Harnack}, \eqref{eq:f=O(d-1/2)} are too weak for this purpose.) This allows us to obtain a contradiction and to complete the proof.
\end{enumerate}

\subsection{Justification of steps (i)--(vi) in the proof of Proposition~\ref{prop:lower-bound}} 
\label{subsec:steps-(i)-(vi)}
We now give details of the proof of Proposition~\ref{prop:lower-bound} that was sketched in Section~\ref{subsec:sketch-lower-bound} except the last step (vii), which is postponed until the next section.

\subsubsection{Steps (i)--(iii): doubly connected domains~$\Xi^\delta$ and~$\Xi$}
\label{subsub:(i)-(iii)} Recall that the proof of Proposition~\ref{prop:lower-bound} goes by contradiction: we assume that there exists a sequence of `bridge-shore' domains~$\Omega^\delta$ on s-embeddings~$\cS^\delta$ such that 
\begin{equation}
\label{eq:sumPinOmega->0}
\textstyle \delta^{\frac12}\sum_{z\in (ab)}\mathbb{P}^{0/1}_{\Omega^\delta}[v^\bullet(z)\leftrightarrow(ba)_{\Omega^\delta}]\,\to\,0\ \ \text{as}\ \ \delta\to 0
\end{equation}
and the {free} boundary arcs~$(ab)_{\Omega^\delta}$ of~$\Omega^\delta$ are special cuts going in the positive direction that approximate the straight segment~$[-\frac12;\frac12]$ in the charts~$\cS^\delta-i\cQ^\delta$. We now define doubly connected domains~$\Xi^\delta$ as follows: \emph{in the same chart}~$\cS^\delta-i\cQ^\delta$, the domain~$\Xi^\delta$ is a discretization of the unit disc~$\mathbb{D}$ slit along the free boundary arc~$(ab)_{ \Omega^\delta}$ such that $\Xi^\delta$ has
\begin{itemize}
\item {free} boundary conditions on the outer boundary (the combinatorial structure of this boundary is irrelevant and can be chosen arbitrarily);
\item {free} boundary condition on the upper side~$(ab)_{\Xi^\delta}=(ab)_{\Omega^\delta}=(ab)$ of the slit, which is kept exactly the same as in~$\Omega^\delta$;
\item {wired} boundary conditions on the lower side~$(ba)_{\Xi^\delta}=(ba)^*$ of the slit that is chosen to be a \emph{dual special cut} {from~$b$ to~$a$} going in the negative direction;
\end{itemize}
see also {Fig.~\ref{fig:Xi}.}

We claim that the monotonicity of the model with respect to the boundary conditions (which follows from the FKG inequality that is available for all ferromagnetic Ising models) and the assumption~\eqref{eq:sumPinOmega->0} imply that
\begin{equation}
\label{eq:sumPinXi->0}
\textstyle \delta^{\frac12}\sum_{z\in (ab)}\mathbb{P}^{0/1}_{\Xi^\delta}\big[v^\bullet(z)\leftrightarrow {(ba)^*}\big]\,\to\,0\ \ \text{as}\ \ \delta\to 0.
\end{equation}
Indeed, when replacing~$\Omega^\delta$ with~$\Xi^\delta$ one decreases each of the terms in this sum: first, imposing the free boundary conditions on the outer boundary of~$\Xi^\delta$ and, second, moving the wired boundary conditions `as far as possible' from~$(ab)$ -- that is, to the other side of~$(ab)$ -- make all connection probabilities smaller. {(Recall also that we assume that~$\Omega^\delta$ does not contain exceptional vertices that lie below the special cut~$(ab)$ but above the dual special cut~$(ba)^*$ and hence~$\Xi^\delta$ contains all vertices near~$(ab)$ that might belong to~$\Omega^\delta$.)}

In order to prove Proposition~\ref{prop:lower-bound} we aim to show that~\eqref{eq:sumPinOmega->0} is impossible, thus obtaining a contradiction. As explained in Section~\ref{subsec:sketch-lower-bound} above, this heavily relies on the analysis of the limits of fermionic observables in domains~$\Xi^\delta$ as~$\delta\to 0$: in fact, the desired contradiction is obtained by considering their (subsequential) limits as~$\delta\to 0$. It is thus convenient to pass, from now onwards, to a subsequence of s-embeddings~$\cS^\delta$ such that the corresponding s-surfaces~$\Theta^\delta$ converge, as~$\delta\to 0$, to a $\kappa$-Lipshitz, $\kappa<1$, space-like surface~$\Theta=(z,\vartheta(z))\subset\R^{2,1}$. In particular, this makes discrete domains~$\Xi^\delta$ converging to a doubly connected domain~$\Xi\subset\C$ such that $w(\Xi)=\mathbb{D}\smallsetminus [-\frac12;\frac12]$, where~$w(z)=z-i\vartheta(z)$.

\subsubsection{Step (iv): fermionic observables in~$\Xi^\delta$} 
\label{subsub:(iv)}
Let us now discuss fermionic observables~$F^\delta$ in~$\Xi^\delta$. Using the language of spin-disorder operators and Kadanoff--Ceva fermions~\cite{kadanoff-ceva,chelkak-semb,CHI-mixed}, the values of these observables on corners~$c\in\Upsilon^\times(\Xi^\delta)$ can be written as~$X^\delta(c)=\langle\psi_c\mu_{(ab)}\sigma_{(ba)^*}\rangle$ exactly as in Dobrushin domains. (From the combinatorial perspective, one treats the upper side~$(ab)=\gamma^+$ of the slit as a single dual vertex, the lower side~${(ba)^*}=\gamma^-$ as a single primal vertex, and the outer boundary of~$\Xi^\delta$ as another single dual vertex.) Due to the presence of \emph{both} spin and disorder operator sitting on~$\gamma$, the branching structure of~$X^\delta(c)$ when~$c$ makes a full turn around~$\gamma$ is the same as it was in absence of the slit. This allows one to define the complex values~$F^\delta(z)$ {as in Dobrushin domains via}~\cite[Eq.~(1.10) and Proposition~2.5]{chelkak-semb}) so that {the function}~$F^\delta$ does \emph{not} change the sign {when}~$z$ goes around~$\gamma$. {(See \cite[Section~2.2]{chelkak-icm18} and~\cite[Sections~2.1--2.3]{CHI-mixed} for a general discussion of the branching structure of Kadanoff-Ceva fermionic observables.)} A similar consideration of the signs applies if one defines~$F^\delta$ \`a la~\cite{smirnov-icm06,chelkak-smirnov-ising,duminil-parafermion} via the expectation of the signed probability that the FK-Ising interface crosses a given edge~$(v^\circ v^\bullet)$.

It is important that~$F^\delta$ has the same `positivity' properties on~$\gamma^+$ that are listed in Proposition~\ref{prop:observables-boundary} as in the case of Dobrushin domains. Indeed, the proof of Proposition~\ref{prop:observables-boundary} does not use any information about the topology of the domain and only requires that the two projections~\eqref{eq:Pr=Pr} of the value~$F^\delta(z)$ on the directions~\eqref{eq:eta-def} associated with the two edges $(v_\pm^\circ v^\bullet)$ equal $e^{i\frac\pi 4}\cX_\pm^{-1}\cdot \mathbb{P}^{0/1}_{\Xi^\delta}[v^\bullet\leftrightarrow {(ba)^*}]$. This is straightforward from the definition of~$F^\delta$ via the Kadanoff--Ceva fermions. (If one instead uses the definition \`a la~\cite{smirnov-icm06,chelkak-smirnov-ising,duminil-parafermion}, the key observation is that the winding of the interface going from~$a$ to~$b$ and touching~$z\in (ab)$ is uniquely determined modulo~$4\pi$.) In particular, due to this version of Proposition~\ref{prop:observables-boundary} for doubly connected domains~$\Xi^\delta$, the assumption~\eqref{eq:sumPinOmega->0} can be rewritten as
\begin{equation}
\label{eq:sumF->0}
\textstyle \delta\cdot \sum_{z\in (ab)}|F^\delta(z)|\to 0\ \ \text{as}\ \ \delta\to 0,
\end{equation}
where $F^\delta$ stands for the fermionic observable in~$\Xi^\delta$. Moreover, an analogue of Proposition~\ref{prop:observables-boundary} holds on the lower side~$\gamma^-$ of the slit since~$\gamma^-$ is a special cut for the dual model as discussed in Section~\ref{subsec:cuts} and Section~\ref{subsub:(i)-(iii)} above. This property of~$\gamma^-$ eventually allows us to claim that the near-to-boundary analysis given in Section~\ref{subsec:step(vii)} can be repeated near~$\gamma^-$ in order to obtain the final contradiction.

Recall now that the definition~\cite[Section~2.4]{chelkak-semb} of the function~$H^\delta$ corresponding to the fermionic observable~$F^\delta$ is purely combinatorial. Similarly to Dobrusin domains, it readily follows from this definition that~$H^\delta|_{(ab)}=0$ and~$H^\delta|_{(ba)^*}=1$ under an appropriate choice of a global additive constant. In particular, the function~$H^\delta$ does \emph{not} have additive monodromy around the slit~$\gamma$. Moreover, one has the usual a priori estimate $0\le H^\delta\le 1$ due to the general `combinatorial' maximum principle provided by~\cite[Proposition~2.10]{chelkak-semb}, which implies that the only possible extremal points of~$H^\delta$ are~$(ab)$ and~$(ba)^*$ but \emph{not} the outer boundary of~$\Xi^\delta$.

\subsubsection{Steps (v)-(vi): boundary values of the function~$g$ on~$\gamma^+$ and uniqueness} \label{subsub:(v)-(vi)}
Denote by~$G^\delta$ the primitive of the closed differential form~$F^\delta d\cS^\delta+i\overline{F}{}^\delta(z)d\cQ^\delta$ defined in an (upper) vicinity of the {special cut}~$(ab)$. As in the case of Dobrushin domains, the estimate~$0\le H^\delta\le 1$ in~$\Xi^\delta$ implies, via~\cite[Corollary~2.20]{chelkak-semb}, that 
\begin{equation}
\label{eq:Fdelta=O(d-1/2)}
F^\delta(z)=O(\dist(z,\partial\Omega^\delta)^{-\frac12}),
\end{equation}
uniformly in~$\delta$. Therefore, the functions~$G^\delta$ are uniformly $\frac12$-H\"older up to~$(ab)=\gamma^+$. Indeed, to compare the values of~$G^\delta$ at two points~$z_1,z_2$ with $|z_2-z_1|=d$, one can choose a contour of integration that first goes from~$z_1$ vertically upwards for a distance comparable to~$d$, then propagates inside a $d$-interior of~$\Xi^\delta$, and reaches~$z_2$ by going vertically downwards. 

It follows from the assumption~\eqref{eq:sumF->0} that
\[
\osc_{(ab)}(G^\delta)\to 0\quad \text{as}\ \ \delta\to 0.
\] 
Due to the uniform continuity of~$G^\delta$ up to~$(ab)$ discussed above and the uniform convergence~$G^\delta\to g$ on compact subsets of~$\Xi$, this implies that the function~$g$ is constant on~$(ab)$.

Let us now consider $g$ as a function of the variable~$w(z)=z-i\vartheta(z)$ defined in an upper vicinity of the slit~$w(\gamma)=[-\frac12;\frac12]$. Recall that $\vartheta$ is a $\kappa$-Lipschitz, $\kappa<1$, real-valued function of~$z$ and hence 
\[
|\vartheta_z|=|\vartheta_{\bar{z}}|\le\tfrac12\kappa.
\]
In particular, $z\mapsto w$ is a bi-Lipshitz homeomorphism and $|z_{\bar{w}}|\le \kappa|z_w|$ almost everywhere. (In fact, one has $|z_{\bar{w}}|\le\kappa'|z_w|$ with $\kappa'=\kappa/(2-\kappa)<\kappa$.) Since $g=\int(fdz+i\overline{f}d\vartheta)$, this function is almost everywhere differentiable in~$z$ and one also has $|g_{\bar{z}}|\le \kappa |g_z|$. Therefore,
\begin{equation}
\label{eq:g-quasireg}
|g_{\bar{w}}|\le \frac{2\kappa}{1+\kappa^2}|g_w|
\end{equation}
almost everywhere, which is equivalent to saying that~$g$ is a weakly $K^2$-quasiregular function of $w$ where~$K=(1+\kappa)/(1-\kappa)$ as usual.

Passing to the limit $\delta\to 0$ in~\eqref{eq:Fdelta=O(d-1/2)} one sees that $f(z)=O(\dist(z,\partial\Xi)^{-1/2})$ and hence the function $z\mapsto g(z)$ belongs to the Sobolev space $W^{1,2-\varepsilon}$ in an upper vicinity of the slit~$\gamma$ for each $\varepsilon>0$. Since the mapping $z\mapsto w(z)$ is bi-Lipschitz, the same is true if one views $g$ as a function of $w$. Recall that~$g=c$ on the segment $w(\gamma)=[-\frac12;\frac12]$ for some constant~$c\in\C$. Using the Schwarz reflection $g(\overline{w}):=-\overline{g(w)}+2\Re c$, one can extend $g$ to a weakly $K^2$-regular function defined in a vicinity of the origin such that~$g\in W^{1,2-\varepsilon}_\mathrm{loc}$ for $\varepsilon<\frac{2}{K^2+1}$. Due to a classical result~\cite[Corollary~14.4.3]{astala-book} from the theory of quasiregular functions, these two conditions imply that $g\in W^{1,2}_\mathrm{loc}$ as a function of~$w$. This further implies that $g$ admits a Sto\"{\i}low factorization (e.g., see \cite[Corollary~14.4.5]{astala-book}) and hence cannot be constant on a segment unless it is constant everywhere in~$\Xi$, which means that~$f=0$ everywhere in~$\Xi$.

\subsection{Justification of step (vii) in the proof of Proposition~\ref{prop:lower-bound}} 
\label{subsec:step(vii)}
It remains to rule out the scenario~$f=0$ and~$h=\mathrm{cst}$ everywhere in~$\Xi$. Our strategy is similar to the analysis performed in~\cite[Section~5]{chelkak-semb} with several simplifications {that allow} us to go beyond the particular case of `almost horizontal' s-surfaces considered in~\cite{chelkak-semb}.

In what follows we always work on S-graphs $\cS^\delta-i\cQ^\delta$; recall that in this chart the special { cuts~$(ab)_{\Xi^\delta}=(ab)_{\Omega^\delta}=(ab)$ approximate} the segment~$[-\frac12;\frac12]\subset\R$. As discussed in \cite[Section~2.5]{chelkak-semb} (in particular, see~\cite[Proposition~2.16]{chelkak-semb}), the function~$\Re F^\delta(z)$ is a martingale with respect to a natural continuous-time \emph{backward} random walk~$\widetilde{Z}_t$ on $\cS^\delta-i\cQ^\delta$. Moreover, as discussed in~\cite[Definition~5.5., Lemma~5.6 and Lemma~5.7]{chelkak-semb}, this random walk can be modified near~$(ab)$ so that it stops on $(ab)$. Similarly to~\cite{chelkak-semb}, we use these observations to prove the following integral estimate for the functions~$\Re F^\delta(z)$. For~$0<y\ll 1$, denote
\[
\cR(y):=\{z\in\Xi^\delta\ \text{that lie above~$(ab)$ with}\ |\Re \cS^\delta(z)|\le \tfrac14,\ \Im (\cS^\delta(z)-i\cQ^\delta)(z)\le y\}\,.
\]
\begin{lemma} 
\label{lem:ReF-in-R(y)}
For small enough~$0<y\ll 1$, the estimate
\begin{equation}
\label{eq:ReF-in-R(y)}
\delta^2\!\!\!\!\sum_{z\in \cR(2y)\smallsetminus\cR(y)}|\Re F^\delta(z)|\,=\,O(y)
\end{equation}
holds uniformly in~$\delta$.
\end{lemma}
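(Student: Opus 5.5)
We will prove \eqref{eq:ReF-in-R(y)} using the martingale property of $\Re F^\delta$ with respect to the modified backward random walk $\widetilde Z_t$ on the S-graph $\cS^\delta-i\cQ^\delta$, stopped upon hitting $(ab)$ (see \cite[Definition~5.5, Lemmas~5.6--5.7]{chelkak-semb}). The key input is the `positivity' of Proposition~\ref{prop:observables-boundary}(iii): on the half-quads of $(ab)$ one has $\Re F^\delta(z)\asymp|F^\delta(z)|>0$. This should let us control $|\Re F^\delta|$ in the thin strip $\cR(2y)\smallsetminus\cR(y)$ by boundary data on $(ab)$ plus interior data at height of order one.

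Concretely, we fix a rectangle $Q$ in the $w$-chart, say $\{|\Re w|\le\frac13,\ 0<\Im w< y_0\}$, with $y_0$ small but fixed. For $z\in\cR(2y)\smallsetminus\cR(y)$ we write $\Re F^\delta(z)$ as the expectation of $\Re F^\delta(\widetilde Z_\tau)$, where $\tau$ is the exit time from $Q$. The exit point is either on $(ab)$, where $\Re F^\delta>0$, or on the three other sides. On the top side we use \eqref{eq:F-via-HF-Harnack}, which gives $|F^\delta|=O(y_0^{-1/2})$. On the lateral sides we use the same bound with the distance to $\partial\Xi^\delta$; the singularity there is near the endpoints of the lateral sides at $(ab)$ and must be integrated against the exit measure. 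This yields
\[
|\Re F^\delta(z)|\le \mathbb{E}_z\big[\Re F^\delta(\widetilde Z_\tau)\mathbbm 1_{\widetilde Z_\tau\in(ab)}\big]+\mathbb{E}_z\big[|F^\delta(\widetilde Z_\tau)|\mathbbm 1_{\widetilde Z_\tau\notin(ab)}\big].
\]

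Next we sum over $z\in\cR(2y)\smallsetminus\cR(y)$ with the weight $\delta^2$. For the first term we need a uniform Poisson-kernel-type estimate for the stopped walk: the expected number of visits to the strip, weighted by area and started at a boundary point $u\in(ab)$, should be $O(y\cdot\delta)$ per boundary half-quad. This is the reversed-time Green function estimate $\delta^2\sum_{z\in\text{strip}}\mathbb{P}_z[\widetilde Z_\tau=u]=O(y\delta)$. Combining it with \eqref{eq:sumF->0}, or simply with the a priori boundedness $\delta\sum_{(ab)}|F^\delta|=O(1)$ coming from Proposition~\ref{prop:upper-bound}, gives a contribution $O(y)$. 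For the second term, the probability of reaching the top side of $Q$ from height $2y$ is $O(y/y_0)$ by a gambler's-ruin estimate for $\widetilde Z_t$, whose imaginary part is (super)martingale-like in the S-graph chart. The lateral sides are handled in the same way, using the integrability of $d^{-1/2}$ along them.

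The main obstacle is the random-walk input: establishing, uniformly in $\delta$ and with non-flat $\Theta^\delta$, the linear-in-$y$ hitting and occupation estimates for the modified backward walk near $(ab)$. We would try first to use that $\Im(\cS^\delta-i\cQ^\delta)(\widetilde Z_t)$ is a martingale with uniformly elliptic increments, as in \cite[Section~5]{chelkak-semb}. This gives optional-stopping bounds of the form $\mathbb{P}_z[\text{reach height }y_0]\le \Im w(z)/y_0$, and it bounds the expected occupation time of the strip by $O(y^2)$ times the number of crossings. This replaces the `almost horizontal' assumption used there.
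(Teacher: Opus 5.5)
Your skeleton matches the paper's: optional stopping for the martingale $\Re F^\delta(\widetilde Z_t)$, positivity of $\Re F^\delta$ on $(ab)$, the a priori bound \eqref{eq:Fdelta=O(d-1/2)} on the rest of the exit boundary, exchange of summations, and the Green-function bound $\delta^2\sum_{z\in\cR(2y)\smallsetminus\cR(y)}\mathbb{P}_z[\widetilde Z_\tau=u]=O(\delta y)$ combined with $\delta\sum_{(ab)}\Re F^\delta=O(1)$. Use only this last bound, not \eqref{eq:sumF->0}, because the same lemma must also be applied near $\gamma^-$, where no analogue of \eqref{eq:sumF->0} is available. Two steps, however, do not work as written.

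\emph{First gap: the corners of your rectangle.} Your rectangle meets the slit at right angles, and on its lateral sides $|F^\delta|$ is only $O(s^{-1/2})$ at height $s$. To get $\mathbb{E}_z[|F^\delta(\widetilde Z_\tau)|\mathbbm{1}_{\{\widetilde Z_\tau\ \text{on a lateral side}\}}]=O(1)$ you need $\mathbb{P}_z[\text{exit through a lateral side at height}\le s]=O(s^p)$ for some $p>\frac12$. Appealing to the integrability of $d^{-1/2}$ along the sides silently assumes a bounded exit density near the corners. For the backward walk on a non-flat S-graph, uniform crossing estimates at a right-angle corner only give decay $s^{p'}$ for some $p'>0$ that is not known to exceed $\frac12$. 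This is why the paper stops the walk on exiting a thin isosceles triangle $\cT$ with base $[-\frac12;\frac12]$ and apex $ih_0$, $h_0\ll1$. To leave through a lateral side at height $\le s$, the walk must travel deep into a wedge of small angle, and crude crossing estimates then yield $O(s^p)$ with $p$ as large as needed. (Your gambler's-ruin bound on the top side is unnecessary: at fixed height, $|F^\delta|=O(1)$ there.)

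\emph{Second gap: the random-walk input is assigned to the wrong walk.} The martingale property of the position $w=(\cS^\delta-i\cQ^\delta)(\cdot)$, together with $|w(Z_t)|^2-t$ being a martingale, holds for the \emph{forward} walk $Z_t$. For the backward walk $\widetilde Z_t$ one only knows that $\Re F^\delta(\widetilde Z_t)$ is a martingale, so neither your gambler's-ruin bound nor your occupation bound can be run for $\widetilde Z$. You allude to the correct route (`reversed-time Green function'), and it has to be carried out as follows.
\begin{itemize}
\item The sum $\sum_{z}\mathbb{P}[\widetilde Z^z_T=z_0]$ is comparable to the expected number of vertices of $\cR(2y)\smallsetminus\cR(y)$ visited by the forward walk $Z^{z_0}$, started at $z_0\in(ab)$ and stopped on exiting $\cT$.
\item Since $Z$ spends time of order $\delta^2$ per vertex, it suffices to show $\mathbb{E}\int_0^T\mathbbm{1}_{\{Z_t\in\cR(2y)\smallsetminus\cR(y)\}}dt=O(\delta y)$.
\item Your `uniformly elliptic increments' is exactly what must be proved here. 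The paper checks jump by jump, including S-graph images of quads with an almost horizontal side of length $\varepsilon\ll\delta$, that $(1-c)(\Im w)^2-c(\Re w)^2$ is a submartingale, hence so is $(\Im w)^2-ct$.
\item Then apply optional stopping to $\psi(\Im w(Z_t))-c\int_0^t\mathbbm{1}_{\{Z_s\in\cR(2y)\smallsetminus\cR(y)\}}ds$. Here $\psi\le0$ vanishes at $-C\delta$ and $h_0+C\delta$, is linear off $[y-C\delta;2y+C\delta]$, and has $\psi''=2$ on that interval. This gives the bound because $|\psi(\Im w(z_0))|=O(\delta y)$ for a start within $O(\delta)$ of the bottom.
\end{itemize}
Your heuristic (reach height $y$ with probability $O(\delta/y)$, then $O(1)$ excursions of duration $O(y^2)$ each) can also be made rigorous. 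It works only for $Z$, however, and only after this lower bound on the quadratic variation of $\Im w$ is established.
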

\begin{proof} Fix a sufficiently small $h_0>0$ and let  $\cT$ be the isosceles triangle with bottom side $[-\frac12;\frac12]$ and the opposite vertex~$ih_0$ drawn in the chart~$\cS^\delta-i\cQ^\delta$. 
Denote by~$T$ the exit time from~$\cT$ for the backward random walk~$\widetilde{Z}^z_t$ started at a point~$z$. We will be applying the optional stopping theorem for the martingale~$\Re F^\delta(\widetilde{Z}^z_t)$. If~$h_0$ is chosen small enough, then so are the angles of the triangle~$\cT$. Then, the uniform crossing estimates for the backward random walk~$\widetilde{Z}_t$ give {an upper bound}
\[
\mathbb{P}\big[\,{\widetilde{Z}^z_T\not\in(ab)\ \&\ }\Im (\cS^\delta-i\cQ^\delta)(\widetilde{Z}^z_T)\le s\,\big]=O({s})
\]
for all {$z$ with $|\Re z|\le\frac14$ and $s>0$. (Moreover, choosing~$h_0$ small enough, one can obtain an upper bound~$O(s^p)$ with arbitrarily large~$p$ in the right-hand side. Any~$p>\frac12$ is sufficient for our purposes.)} Together with~\eqref{eq:Fdelta=O(d-1/2)}, this estimate implies that
\[
\Re F^\delta(z)=O(1)+\mathbb{E}\big[\Re F^\delta(\widetilde{Z}_T^z)\mathbbm{1}_{\widetilde{Z}_T^z\in(ab)}\big]\,.
\]

Due to Proposition~\ref{prop:observables-boundary}, one has~$\Re F^\delta>0$ everywhere on~$(ab)$. {Note that the rectangle~$\cR(2y)\smallsetminus\cR(y)$ contains~$O(\delta^{-2}y)$ vertices.} Therefore,
\begin{align*}
\delta^2\!\!\!\!\sum_{z\in \cR(2y)\smallsetminus\cR(y)}|\Re F^\delta(z)|&\,=\, O(y)+\delta^2\!\!\sum_{z\in\cR(2y)\smallsetminus\cR(y)}\sum_{z_0\in(ab)}\mathbb{P}\big[\widetilde{Z}_T^z=z_0\big]\Re F^\delta(z_0)\\
&\,=\,O(y)+\delta^2\!\!\!\!\sum_{z_0\in(ab)}\Re F^\delta(z_0)\sum_{z\in\cR(2y)\smallsetminus\cR(y)}\mathbb{P}\big[\widetilde{Z}_T^z=z_0\big].
\end{align*}
It follows from Proposition~\ref{prop:upper-bound} and Proposition~\ref{prop:observables-boundary} that
\[
\textstyle \delta\sum_{z_0\in(ab)}\Re F^\delta(z_0)=O(1).
\]
(In fact, when arguing by contradiction we even assume that this sum tends to zero as~$\delta\to 0$, see~\eqref{eq:sumF->0}. However, let us emphasize that we do \emph{not} use this stronger property in what follows. This is important as we also need the same near-to-boundary analysis to be applicable near the lower side of the cut~$\gamma$, for which an analogue of~\eqref{eq:sumF->0} is not available.) Thus, in order to prove the lemma it is sufficient to show that
\begin{equation}
\label{eq:x-sumP=O(y)}
\textstyle \delta\sum_{z\in\cR(2y)\smallsetminus\cR(y)}\mathbb{P}\big[\widetilde{Z}_T^z=z_0\big]=O(y)
\end{equation}
uniformly with respect to~$\delta$ and $z_0\in(ab)$.

To prove~\eqref{eq:x-sumP=O(y)}, let us now consider the \emph{forward} random walk~$Z_t^{z_0}$ on~$\cS^\delta-i\cQ^\delta$ started at the boundary point~$z_0\in(ab)$ and stopped when exiting the triangle~$\cT$. {For a fixed~$z_0\in (ab)$, the sum $\sum_{z\in\cR(2y)\smallsetminus\cR(y)}\mathbb{P}\big[\widetilde{Z}_T^z=z_0\big]$ equals, up to a factor comparable to~$1$, the expected number of vertices on the trajectory of~$Z_t^{z_0}$ that lie inside the rectangle~$\cR(2y)\smallsetminus\cR(y)$.} Recall (see~\cite[Section~2.5]{chelkak-semb} for details) that the position of this continuous-time random walk on~$\cS^\delta-i\cQ^\delta$, viewed as a complex number, is a martingale and its time-normalization is chosen so that~$|(S^\delta-i\cQ^\delta)(Z_t)|^2-t$ is a martingale. In particular, it spends $O(\delta^2)$ time, on average, at each vertex along its path. Therefore, the desired estimate~\eqref{eq:x-sumP=O(y)} follows from the uniform estimate
\begin{equation}
\label{eq:x-E-time-in-R(y)}
\textstyle \mathbb{E}\big[\int_0^T\mathbbm{1}_{Z_t^{z_0}\in \cR(2y)\smallsetminus\cR(y)}dt\big]=O(\delta y)
\end{equation}
of the expected time spent by~$Z_t^{z_0}$ in~$\cR(2y)\smallsetminus\cR(y)$. (As above, we denote by $T$ the exit time from the triangle~$\cT$.)

To prove~\eqref{eq:x-E-time-in-R(y)}, note that the {property} \UnifD~implies that for small enough $c>0$ the process
\[
(1-c)(\Im (S^\delta-i\cQ^\delta)(Z_t))^2-c(\Re(S^\delta-i\cQ^\delta)(Z_t))^2
\] 
is a submartingale. ({This is easy to check for each jump of the continuous-time random walk~$Z_t$.} Even if one side of the image of a tangential quad of size~$\delta$ on the S-graph~$S^\delta-i\cQ^\delta$ is almost horizontal and then necessarily has length~$\varepsilon\ll\delta$, the random walk~$(\cS^\delta-i\cQ^\delta)(Z_t)$, being a martingale, {makes} the other two jumps with probabilities of order~$\varepsilon\delta^{-1}$. Therefore, the main contribution to the quadratic variation {is of order~$\varepsilon\delta$ and} comes from these two `long' jumps rather than from the {`short' one, which contributes only~$O(\varepsilon^2)$.}) As~$|(S^\delta-i\cQ^\delta)(Z_t)|^2-t$ is a martingale, this implies that the process
\[
(\Im (S^\delta-i\cQ^\delta)(Z_t))^2-ct
\]
is a submartingale too. It is now easy to see that the expected time spent by the martingale process~$\Im(S^\delta-i\cQ^\delta)(Z_t^{z_0})$ in the segment~$[y;2y]$ is bounded by~$O(\delta y)$. Indeed, let~$C\delta$ be the maximal size of its jumps and $\psi:[-C\delta;h_0+C\delta]\to\R_-$ be the function that vanishes at~$-C\delta$ and~$h_0+C\delta$, is linear on the segments $[-C\delta;y-C\delta]$ and $[2y+C\delta;h_0+C\delta]$, and has constant second derivative~$2$ on the segment $[y-C\delta;2y+C\delta]$. {Note that} the process
\[
\textstyle \psi(\Im(S^\delta-i\cQ^\delta)(Z_t^{z_0}))-c\int_0^t\mathbbm{1}_{Z_s^{z_0}\in {\cR(2y)\smallsetminus \cR(y)}}ds
\]
is a submartingale. {Therefore, it follows from the optional stopping theorem that
\[
\psi\big(\Im(\cS^\delta(z_0)-i\cQ^\delta)(z_0)\big)\,\le\,\textstyle - c\cdot\mathbb{E}\big[\int_0^T\mathbbm{1}_{Z_t^{z_0}\in {\cR(2y)\smallsetminus \cR(y)}}dt\big].
\]
Moreover, it is clear that
\[
\psi'|_{[-C\delta;y-C\delta]}\,=\,-2(y+2C\delta)+\psi'|_{[2y+C\delta;h_0+C\delta]}\,\ge\,-2(y+2C\delta)
\]
and hence~$\psi(\Im(\cS^\delta-i\cQ^\delta)(z_0))=O(\delta y)$. This implies the upper bound~\eqref{eq:x-E-time-in-R(y)}.} 
\end{proof}
\begin{corollary}
\label{cor:F|ReF|-in-R(y)}
For all small enough~$0<y\ll 1$ one has the uniform (in~$\delta$ and~$y$) estimate
\begin{equation}
\label{eq:F|ReF|-in-R(y)}
\delta^2\sum_{z\in\cR(2y)}|\Re F^\delta(z)||F^\delta(z)|\,=\,O(y^{\frac12}).
\end{equation}
\end{corollary}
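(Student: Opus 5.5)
We will obtain \eqref{eq:F|ReF|-in-R(y)} by combining Lemma~\ref{lem:ReF-in-R(y)} with the a priori bound \eqref{eq:Fdelta=O(d-1/2)} on $|F^\delta|$, and summing over dyadic layers. Write
\[
\cR(2y)=\bigcup_{k\ge 0}\big(\cR(2^{-k+1}y)\smallsetminus\cR(2^{-k}y)\big),
\]
stopping at $2^{-k}y\asymp\delta$; the remaining $O(\delta)$-neighbourhood of $(ab)$ is handled separately at the end. On the layer $\cR(2^{-k+1}y)\smallsetminus\cR(2^{-k}y)$, every point $z$ lies above $(ab)$ at $\Im(\cS^\delta-i\cQ^\delta)$-height of order $y_k:=2^{-k}y$. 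We also have $|\Re\cS^\delta(z)|\le\frac14$, so $z$ is far from the endpoints $a,b$ and from the outer boundary of~$\Xi^\delta$. Since the chart $z\mapsto z-i\cQ^\delta(z)$ is bi-Lipschitz on scales above $C\delta$, this gives $\dist(z,\partial\Xi^\delta)\asymp y_k$ once $y_k\ge C\delta$.

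In each layer we bound $|F^\delta(z)|$ uniformly by $O(y_k^{-1/2})$ using \eqref{eq:Fdelta=O(d-1/2)}, which holds in $\Xi^\delta$ because $0\le H^\delta\le 1$ there. We bound $\delta^2\sum|\Re F^\delta|$ by $O(y_k)$ using Lemma~\ref{lem:ReF-in-R(y)} with $y$ replaced by $y_k$; the lemma is uniform in $\delta$ and valid for all small $y_k$. Each layer therefore contributes
\[
\delta^2\sum_{\text{layer }k}|\Re F^\delta(z)||F^\delta(z)|\;\le\;O(y_k^{-1/2})\cdot O(y_k)\;=\;O\big((2^{-k}y)^{1/2}\big),
\]
and the sum over $k$ is a geometric series bounded by $O(y^{1/2})$.

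The main point to check is the bottom layer, at heights $O(\delta)$ above $(ab)$, together with the requirement that Lemma~\ref{lem:ReF-in-R(y)} remain valid down to $y_k\asymp\delta$. At such heights $\dist(z,\partial\Xi^\delta)\asymp\delta$, and the discrete bound of Proposition~\ref{prop:observables-boundary}(i) gives $|F^\delta(z)|=O(\delta^{-1/2})$ near the boundary. There are $O(\delta^{-1})$ such quads, and $\Re F^\delta$ is comparable to $|F^\delta|$ on boundary half-quads by Proposition~\ref{prop:observables-boundary}(iii), extended to nearby quads by s-holomorphicity with $O(1)$ constants. So the contribution of this layer is at most
\[
\delta^2\cdot\delta^{-1/2}\cdot\sum_{z\text{ near }(ab)}|F^\delta(z)|\;=\;\delta^{1/2}\cdot O\Big(\delta\sum_{z_0\in(ab)}\Re F^\delta(z_0)\Big)\;=\;O(\delta^{1/2}),
\]
where we used the bound $\delta\sum_{z_0\in(ab)}\Re F^\delta(z_0)=O(1)$ from Propositions~\ref{prop:upper-bound} and~\ref{prop:observables-boundary}. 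The same bound also follows directly from the proof of Lemma~\ref{lem:ReF-in-R(y)} at scale $y\asymp\delta$. Since $\delta\lesssim y$, this contribution is $O(y^{1/2})$.

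The argument uses only Lemma~\ref{lem:ReF-in-R(y)}, \eqref{eq:Fdelta=O(d-1/2)}, and the bound $\delta\sum_{z_0\in(ab)}\Re F^\delta(z_0)=O(1)$, which does not rely on the contradiction assumption~\eqref{eq:sumF->0}. It therefore applies verbatim near the lower side $\gamma^-$ as well.
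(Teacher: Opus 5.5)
Your proposal is correct and follows essentially the same route as the paper: dyadic layers $\cR(2^{-n+1}y)\smallsetminus\cR(2^{-n}y)$, Lemma~\ref{lem:ReF-in-R(y)} at each scale combined with $|F^\delta(z)|=O(\dist(z,\partial\Xi^\delta)^{-1/2})$, and a geometric series. The paper handles the layer at height $O(\delta)$ implicitly by letting $n$ run up to $O(|\log\delta|)$, which is your second option of applying the lemma (or its proof) at scale $y\asymp\delta$. Your first option for that layer is not justified as written, because s-holomorphicity only matches one projection of $F^\delta$ across each edge and gives no pointwise control of $|F^\delta|$ at nearby interior quads by the boundary values (one would need, e.g., finite energy in the FK representation). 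Since the second option already suffices, this does not affect the argument.
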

\begin{proof}
The estimates~\eqref{eq:ReF-in-R(y)} and~\eqref{eq:Fdelta=O(d-1/2)} imply that the sum of these terms over the rectangle $\cR(2^{-n+1}y)\smallsetminus\cR(2^{-n}y)$ is $O((2^{-n}y)^{1/2})$, uniformly in~$\delta$ and~$n$. Summing this over~$n=0,1,\ldots,O(|\log \delta|)$ one obtains~\eqref{eq:F|ReF|-in-R(y)}.
\end{proof}

We now use~\eqref{eq:F|ReF|-in-R(y)} to obtain an estimate for the average values of the functions~$H^\delta$ in rectangles~$\cR(2y)\smallsetminus \cR(y)$. Unlike in~\cite[Section~5]{chelkak-semb}, we do \emph{not} use the tilted s-embeddings~$\cS^\delta-ie^{2i\phi}\cQ^\delta$ in order to obtain this estimate. In particular, this allows us not to include the very technical notion of `doubletons' used in~\cite{chelkak-semb} in the definition of special cuts; { cf. Definition~\ref{def:s-cut} above and \cite[Definition~5.1]{chelkak-semb}.}

\begin{lemma}
\label{lem:H-in-R(y)}
For small enough~$0<y\ll 1$, the estimate
\begin{equation}
\label{eq:H-in-R(y)}
\delta^2\!\!\!\!\sum_{v\in\cR(2y)\smallsetminus\cR(y)} H^\delta(v)\,=\,O(y^{\frac32})
\end{equation}
holds uniformly in~$\delta$.
\end{lemma}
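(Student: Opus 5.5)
We will integrate the closed form $dH^\delta=\tfrac12(\Im[(F^\delta)^2d\cS^\delta]+|F^\delta|^2d\cQ^\delta)$ from the upper side $(ab)=\gamma^+$ of the slit, where $H^\delta=0$ (Proposition~\ref{prop:closed-forms}(iii) and Section~\ref{subsub:(iv)}), upwards in the chart $\cS^\delta-i\cQ^\delta$. For a vertex $v\in\cR(2y)\smallsetminus\cR(y)$ we use a `vertical' discrete path $P_v$ descending from $v$ to $(ab)$ inside the strip $|\Re\cS^\delta|\le\frac14+O(\delta)$, of Euclidean length $O(y)$ and with $O(\delta^{-1}y)$ edges. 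Then $H^\delta(v)$ equals the sum of the increments of $dH^\delta$ along $P_v$, so it suffices to bound these increments by something involving $|\Re F^\delta||F^\delta|$.

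The key algebraic step is to rewrite the form in the chart $w=\cS^\delta-i\cQ^\delta$. Writing $F=F^\delta$, $d\cS=d\cS^\delta$, $d\cQ=d\cQ^\delta$ and using $\Im[F^2d\cS]=2\Re F\,\Im F\,\Re d\cS+((\Re F)^2-(\Im F)^2)\Im d\cS$ and $|F|^2d\cQ=((\Re F)^2+(\Im F)^2)d\cQ$, we get
\[
2\,dH^\delta=2\Re F\,\Im F\,\Re d\cS+(\Re F)^2(\Im d\cS+d\cQ)-(\Im F)^2(\Im d\cS-d\cQ).
\]
Note that $\Im d\cS-d\cQ=\Im dw$. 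Along a path going \emph{upward} in the $w$-chart, this last term is $-(\Im F)^2\,|\Im dw|\le 0$, while every other term carries at least one factor $\Re F$ and is bounded by $O(|\Re F||F|)|d\cS|$. Since $H^\delta$ vanishes on $(ab)$, summing over the edges of $P_v$ gives
\[
H^\delta(v)\;\le\;O\Big(\delta\sum_{z\in P_v}|\Re F^\delta(z)||F^\delta(z)|\Big).
\]
There is a subtlety at the discrete level: individual edges of a monotone path in the $w$-chart may have $\Im dw<0$. This is where the S-graph structure enters. We choose $P_v$ along edges of the S-graph whose image is non-decreasing in $\Im w$, which exist by non-degeneracy of $\cS^\delta-i\cQ^\delta$. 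If that is awkward, we average over a family of such paths and accept a harmless extra term bounded by the same $|\Re F||F|$ sum.

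Since $H^\delta\ge0$, we now sum over $v\in\cR(2y)\smallsetminus\cR(y)$. Each quad $z\in\cR(2y)$ lies on $P_v$ for $O(\delta^{-1}y)$ vertices $v$: those above $z$ in the same vertical column up to height $2y$, using the bounded density of quads under \UnifD. Hence
\[
\delta^2\!\!\!\!\sum_{v\in\cR(2y)\smallsetminus\cR(y)}\!\!H^\delta(v)\;\le\;O\Big(\delta^3\cdot\delta^{-1}y\sum_{z\in\cR(2y)}|\Re F^\delta(z)||F^\delta(z)|\Big)\;=\;O(y)\cdot O(y^{1/2}),
\]
by Corollary~\ref{cor:F|ReF|-in-R(y)}. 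This is the claimed bound $O(y^{3/2})$.

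The main obstacle is making the sign argument rigorous on the discrete level. One must check that on each edge $(v^\circ v^\bullet)$ the increment of $H^\delta$, computed via the identity in Proposition~\ref{prop:closed-forms}(ii), splits as a nonpositive term plus a term $O(|\Re F||F|\delta)$ when the edge is traversed in the upward direction of the S-graph. I would first verify this using the explicit form $\frac1{4i}(F\,d\cS+i\overline F\,d\cQ)^2(d\cS)^{-1}$ together with $d\cQ=|d\cS|$ on such edges. Near the boundary, where the path meets $(ab)$, I would use the positivity $\Re F^\delta\asymp|F^\delta|$ from Proposition~\ref{prop:observables-boundary}(iii) to absorb any residual contribution.
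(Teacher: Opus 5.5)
Your strategy is the paper's: integrate $dH^\delta$ upward from $(ab)$ in the chart $w=\cS^\delta-i\cQ^\delta$, drop the $-(\Im F)^2\Im dw$ term when $\Im dw>0$, count $O(\delta^{-1}y)$ paths through each quad, and apply Corollary~\ref{cor:F|ReF|-in-R(y)}. Your algebraic identity and final bookkeeping are correct. The gap is the discrete path selection, which you flag as the main obstacle but resolve incorrectly.

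In this chart one has $\Im w(v^\circ)\ge\Im w(v^\bullet)$ on every edge $(v^\circ v^\bullet)$. This is the inequality $\Im\cS(v^\circ)-\cQ(v^\circ)\ge\Im\cS(v^\bullet)-\cQ(v^\bullet)$ from Section~\ref{subsec:s-surfaces}, and non-degeneracy makes it strict. So every primal-to-dual edge goes up and every dual-to-primal edge goes down. Since an edge path alternates between $G^\bullet$ and $G^\circ$, it can never be monotone in $\Im w$ beyond a single edge. Your claim that such monotone edge paths exist ``by non-degeneracy'' is therefore false: non-degeneracy is exactly what makes every such path go strictly up and then strictly down.

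The averaging fallback does not rescue this. On each downward edge the term $+(\Im F)^2|\Im dw|$ appears with the wrong sign. It can be of order $\delta|F|^2$ and is not dominated by $|\Re F||F|$, and only $|\Re F||F|$ is controlled by Corollary~\ref{cor:F|ReF|-in-R(y)}. Cancelling it against the following upward edge lying in a \emph{different} quad would require comparing $(\Im F)^2$ on neighbouring quads, which no available estimate provides.

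The missing observation is that on a single quad $z$ the increment of $H^\delta$ between \emph{any} two of its four vertices is the same linear expression $\frac12\bigl(\Im[F^\delta(z)^2\Delta\cS^\delta]+|F^\delta(z)|^2\Delta\cQ^\delta\bigr)$, with the one value $F^\delta(z)$. Hence you may step along the diagonals $v_0^\bullet v_1^\bullet$ or $v_0^\circ v_1^\circ$ of a quad; equivalently, merge two consecutive edges of the same quad into one step. This is what the paper does. It takes paths $v_0,\dots,v_{n+1}$ with $v_k,v_{k+1}$ in a common quad $z_k$, not required to alternate between primal and dual vertices, and with $\Im w(v_{k+1})>\Im w(v_k)$ at every step. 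Your sign argument then applies step by step with no residual term, and the rest of your proof goes through unchanged.
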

\begin{proof} Recall that the function~$H^\delta$ has zero Dirichlet boundary values on~$(ab)$. Given a vertex~$v\in\cR(2y)\smallsetminus \cR(y)$, let~$\pi^\delta(v)$ be a lattice path~$v_0,v_1,\ldots,v_{n+1}$ going from a certain vertex~$v_0\in(ab)$ to~$v_{n+1}=v$ in an $O(\delta)$ vicinity of the corresponding vertical line such that $v_k$ and~$v_{k+1}$ belong to the same quad~$z_k$ and 
\begin{equation}
\label{eq:x-path-upwards}
\Im(S^\delta-i\cQ^\delta)(v_{k+1})>\Im(S^\delta-i\cQ^\delta)(v_k)
\end{equation} 
for all~$k=0,\ldots,n$. (Note that we do \emph{not} insist upon the interlacement of the primal and the dual vertices along~$\pi^\delta(v)$ and allow that~$v_k$ and~$v_{k+1}$ are both primal or both dual ones.) By definition, one has
\[
\textstyle H^\delta(v)=\sum_{k=0}^{n}\frac12\big(\Im[(F^\delta(z_k))^2(\cS^\delta(v_{k+1})-\cS^\delta(v_k))]+|F^\delta(z_k)|^2(\cQ^\delta(v_{k+1})-\cQ^\delta(v_k))\big)
\]
A simple manipulation shows that
\begin{align*}
\Im[(F^\delta(z_k))^2&(\cS^\delta(v_{k+1})-\cS^\delta(v_k))]+|F^\delta(z_k)|^2(\cQ^\delta(v_{k+1})-\cQ^\delta(v_k))\\
=\ & \Re[(F^\delta(z_k))^2]\cdot \Im[(S^\delta-i\cQ^\delta)(v_{k+1})-\Im(S^\delta-i\cQ^\delta)(v_k)]\\
&+\ 2\Re F^\delta(z_k)\Im F^\delta(z_k)\cdot \Re[\cS^\delta(v_{k+1})-\cS^\delta(v_k)]\\
&+\ 2(\Re F^\delta(z_k))^2\cdot (\cQ^\delta(v_{k+1})-\cQ^\delta(v_k))\,.
\end{align*}
Since we choose the path~$\pi^\delta(v)$ so that~\eqref{eq:x-path-upwards} holds, in the first term one can replace $\Re[(F^\delta(z_k))^2]$ by a bigger quantity~$(\Re F^\delta(z_k))^2$, thus getting rid of the problematic term~$(\Im F^\delta(z_k))^2$. This leads to the upper bound
\[
\textstyle H^\delta(v)\ \le\ O(\delta)\cdot \sum_{k=0}^n |\Re F^\delta(z_k)||F^\delta(z_k)|.
\]

Finally, note that for each quad~$z$ we have $O(y\delta^{-1})$ paths~$\pi^\delta(v)$, $v\in\cR(2y)\smallsetminus\cR(y)$ passing through~$z$. (All such possible paths correspond to vertices~$v$ that belong to an~$O(\delta)$ vicinity of the vertical line passing through~$z$.) Hence,
\[
\delta^2\!\!\!\!\sum_{v\in\cR(2y)\smallsetminus\cR(y)} H^\delta(v)\ \le\ O(y)\cdot \delta^2\!\!\!\!\sum_{z\in\cR(2y)\smallsetminus\cR(y)}|\Re F^\delta(z_k)||F^\delta(z_k)|
\]
and the desired estimate~\eqref{eq:H-in-R(y)} readily follows from Corollary~\ref{cor:F|ReF|-in-R(y)}.
\end{proof}

We are now ready to complete the justification of step~(vii) in the proof of Proposition~\ref{prop:lower-bound}. Recall that this proof goes by contradiction and we already know that in this case each subsequential limit~$h$ of functions~$H^\delta$ must be constant everywhere in~$\Xi$. It follows from Lemma~\ref{lem:H-in-R(y)} that the only possible value of this constant is~$0$ as~$H^\delta$ converge to~$h$ uniformly in~$\cR(2y)\smallsetminus\cR(y)$ for each $0<y\ll 1$. However, a similar analysis can be applied near the \emph{lower} side~$\gamma^-$ of the slit. Indeed, as discussed in Section~\ref{subsec:cuts} and Section~\ref{subsub:(i)-(iii)} above, the lower side of~$\gamma$ is a special cut for the \emph{dual} model, which corresponds to the analysis of~$H^\delta$ near the boundary arc with wired boundary conditions. Thus, the same analysis near~$\gamma^-$ implies that the only possible value of the constant function~$h$ is~$1$. This gives the desired contradiction and completes the proof of Proposition~\ref{prop:lower-bound}.

\subsection{Proof of Proposition~\ref{prop:lower-bound-w/o-legs}} \label{subsec:lower-bound-w/o-legs}
Finally, let us show how to derive Proposition~\ref{prop:lower-bound-w/o-legs} from {Proposition~\ref{prop:upper-bound} and Proposition~\ref{prop:lower-bound}} using the closed differential form~$F d\cS+i\overline{F}d\cQ$, the `positivity' of fermionic observables on special cuts provided by Proposition~\ref{prop:observables-boundary}, and the self-duality of the model. This argument is similar to~\cite[Lemma~5.3]{duminil-manolescu-tassion-fractal}.

Due to the finite energy property, we can assume that $L$ is large enough without loss of generality. 
Rotating the s-embedding, we can also assume that~$(ba)_\Omega=(ba)^*$ is a \emph{dual} special cut going in the positive direction; see Sections~\ref{subsec:cuts} and~\ref{subsec:bridge-shore}. Using item~(i) of Proposition~\ref{prop:closed-forms} one gets an approximate identity
\[
\sum_{z\in {(ba)^*}}\Re\big[F(z)d\cS(z)+i\overline{F(z)}d\cQ(z)\big]+\sum_{z\in (ab)_{\Omega}}\Re\big[F(z) d\cS(z)+i\overline{F(z)} d\cQ(z)\big] =O(1),
\]
where the right-hand side is the contribution of the source and the exit edges~$a,b$.
It follows from Proposition~\ref{prop:observables-boundary} and Proposition~\ref{prop:lower-bound} applied to the \emph{dual} model that
\[
\sum_{z\in (ba)^*}\Re\big[F(z)d\cS(z)+i\overline{F(z)}d\cQ(z)\big]\ \asymp\ \!\!\!\sum_{z\in (ba)^*}\mathbb{P}^{0/1}_{\Omega}[v^\circ(z)\,\mathop{\leftrightarrow}\limits^{*}\,(ab)_{\Omega}]
\ \ge\ \mathrm{cst}\cdot L^{\frac12},
\]
where $\mathrm{cst}>0$ is a constant that depends only on the constant~$c_0$ in the property \UnifD\ and $\mathop{\leftrightarrow}\limits^*$ stands for the dual crossing. 

At the same time, due to item~(i) of Proposition~\ref{prop:observables-boundary} we have
\[
\Re\big[F(z)d\cS(z)+i\overline{F(z)}d\cQ(z)\big]\,=\,O\big(\mathbb{P}^{0/1}_{\Omega}\big[v^\bullet(z)\leftrightarrow {(ba)^*}\big]\big)\ \text{for all}\ z\in(ab)_{\Omega}.
\]
Moreover, it follows from Proposition~\ref{prop:lower-bound} {and Lemma~\ref{lem:ramps-not-screened}} applied {for} two `ramp' parts~$(aa_0)$ and~$(b_0b)$ of the free arc~$(ab)_{\Omega}$ that
\[
\textstyle \sum_{z\in (aa_0)\cup(b_0b)} O\big(\mathbb{P}^{0/1}_{\Omega}\big[v^\bullet(z)\leftrightarrow {(ba)^*}\big]\big)\ =\ O\big((\rho L)^{\frac12}\big).
\]
{(Indeed, due to trivial monotonicity reasons, in order to prove this estimate, say, for the special cut~$(aa_0)$ one can consider~$\Omega$ with wired boundary conditions everywhere except~$(aa_0)$ and to apply Proposition~\ref{prop:upper-bound} in this setup.)} Therefore, 
\[
\textstyle O\big(\sum_{z\in (a_0b_0)_{\Omega}}\mathbb{P}^{0/1}_{\Omega}\big[v^\bullet(z)\mathop{\leftrightarrow}{(ba)^*}\big]\big)\ \ge\ \mathrm{cst}\cdot L^{\frac12}-O\big((\rho L)^{\frac12}+1\big),
\]
which proves the desired lower bound provided that~$\rho$ is chosen small enough (depending only on {the constant $c_0$} in the property \Unif) and $L$ is sufficiently large. \qed

\renewcommand{\thesection}{A}
\section{Appendix}
\setcounter{equation}{0}
\renewcommand{\theequation}{A.\arabic{equation}}

We now prove several geometric properties of special cuts that we needed for the construction of `bridge-shore' subdomains in Section~\ref{subsec:fjords}. We begin with formulating a `distorted triangle inequality' for the $\|\cdot\|_{2,1}$-distances between points on $\kappa$-Lipschitz s-surfaces $\cQ^\bullet$ associated with discrete domains~$\cD$.
\begin{lemma}
\label{lem:distorted-triangle-ineq} For each triple of points~$z_0,z_1,z_2\in\cD$ one has
\begin{equation}
\label{eq:distorted-triangle-ineq}
\|z_2-z_0\|_{2,1}\,\le\,\|z_1-z_0\|_{2,1}+\frac{1+\kappa^2}{1-\kappa^2}\|z_2-z_1\|_{2,1}\,.
\end{equation}
\end{lemma}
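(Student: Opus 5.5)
We need to prove the distorted triangle inequality for the pseudo-distance $\|z'-z\|_{2,1}=(|z'-z|^2-|\cQ^\bullet(z')-\cQ^\bullet(z)|^2)^{1/2}$, where $\cQ^\bullet$ is $\kappa$-Lipschitz with $\kappa<1$. The plan is to avoid Minkowski geometry (the reverse triangle inequality for time-like vectors is of no use here, since all our vectors are space-like) and instead compare everything with Euclidean distances via~\eqref{eq:dist-2,1-comparable}, splitting into two cases according to the size of $\|z_2-z_1\|_{2,1}$ relative to $\|z_1-z_0\|_{2,1}$.

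Set $x:=z_1-z_0$, $y:=z_2-z_1$, $q_1:=\cQ^\bullet(z_1)-\cQ^\bullet(z_0)$, $q_2:=\cQ^\bullet(z_2)-\cQ^\bullet(z_1)$. The Lipschitz property gives $|q_1|\le\kappa|x|$ and $|q_2|\le\kappa|y|$. We must bound
\[
\|z_2-z_0\|_{2,1}^2=|x+y|^2-(q_1+q_2)^2 .
\]
The key step is the elementary expansion
\[
|x+y|^2-(q_1+q_2)^2=\big(|x|^2-q_1^2\big)+\big(|y|^2-q_2^2\big)+2\big(\Re[x\bar y]-q_1q_2\big).
\]
The cross term is at most $2|x||y|+2\kappa^2|x||y|=2(1+\kappa^2)|x||y|$. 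Using~\eqref{eq:dist-2,1-comparable} in the form $|x|\le(1-\kappa^2)^{-1/2}\|x\|_{2,1}$, and similarly for $y$, we get, with $A:=\|x\|_{2,1}$ and $B:=\|y\|_{2,1}$,
\[
\|z_2-z_0\|_{2,1}^2\,\le\,A^2+B^2+2\frac{1+\kappa^2}{1-\kappa^2}\,AB .
\]

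It then remains to check that the right-hand side is at most $(A+\lambda B)^2$ with $\lambda=\frac{1+\kappa^2}{1-\kappa^2}\ge1$. Indeed,
\[
(A+\lambda B)^2=A^2+2\lambda AB+\lambda^2B^2\,\ge\,A^2+2\lambda AB+B^2 .
\]
So no case split is needed after all, and the asymmetric constant placed on the second term arises naturally. Taking square roots (both sides are nonnegative) yields~\eqref{eq:distorted-triangle-ineq}.

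The only subtle point is the bound on the cross term: we must use $-q_1q_2\le\kappa^2|x||y|$ rather than discard it, because it has no favorable sign. This is exactly what produces the factor $1+\kappa^2$. Everything else is routine, and the argument works for arbitrary points of $\cD_\C$, since $\cQ^\bullet$ is $\kappa$-Lipschitz on all scales.
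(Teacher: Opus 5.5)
Your argument is correct and is essentially the paper's proof: expand the square, bound the cross term $\Re[x\bar y]-q_1q_2$ by $(1+\kappa^2)|x||y|$ via the $\kappa$-Lipschitz property, pass to $\|\cdot\|_{2,1}$-lengths using $|x|\le(1-\kappa^2)^{-1/2}\|x\|_{2,1}$, and absorb $B^2\le\lambda^2B^2$ since $\lambda\ge1$. Your bookkeeping with the increments $x=z_1-z_0$, $y=z_2-z_1$ (effectively centering at $z_1$) is if anything cleaner than the paper's one-line version, which normalizes at $z_0$ even though the argument needs exactly the cross term you control.
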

\begin{proof} 
Without loss of generality, assume that~$z_0=0$ and~$\cQ^\bullet(z_0)=0$. Taking the square and writing the $\|\cdot\|_{2,1}^2$ norms in coordinates, one observes that this inequality follows from the estimate
\begin{align*}
\Re[z_2\overline{z}_1]-\cQ^\bullet(z_2)\cQ^\bullet(z_1)\,&\le\,(1+\kappa^2)|z_1||z_2|\\
&\le\,\frac{1+\kappa^2}{1-\kappa^2}\big(|z_1|^2-(\cQ^\bullet(z_1))^2\big)^{\frac12}\big(|z_2|^2-(\cQ^\bullet(z_2))^2\big)^{\frac12}\,,
\end{align*}
where we used that~$\big|\cQ^\bullet(z_1)\big|\le\kappa|z_1|$ and~$\big|\cQ^\bullet(z_2)\big|\le\kappa|z_2|$.
\end{proof}
We are now ready to prove the key estimate used in Section~\ref{subsec:fjords}.
\begin{lemma} 
\label{lem:polar-coordinates}
Let~$\gamma_1$ and~$\gamma_2$ be two special cuts emanating from the same point~$z_0$. Denote by~$\phi\in(0,\pi]$ the angle between their directions and let~$z_1\in\gamma_1$, $z_2\in\gamma_2$ be two points on these cuts such that~$\|z_1-z_0\|_{2,1}=\|z_2-z_0\|_{2,1}=\ell>0$. Then,
\begin{equation}
\label{eq:polar-coordinates}
\tfrac12\phi\ell\le \dist(z_1,z_2)\le A\phi\ell,
\end{equation}
where the constant $A=(1-\kappa)^{-1}(1-\kappa^2)^{-1/2}$ depends only on~$c_0$.
\end{lemma}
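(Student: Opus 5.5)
Normalize $z_0=0$ and $\cQ^\bullet(0)=0$. Write $\alpha_1,\alpha_2$ for the directions of $\gamma_1,\gamma_2$, so that $\alpha_2=e^{i\phi}\alpha_1$ up to orientation. The defining property of a special cut through $z_0$ in direction $\alpha_j$ gives, for $z_j\in\gamma_j$,
\[
\Im[\overline{\alpha}_j z_j]=\cQ^\bullet(z_j),\qquad \Re[\overline{\alpha}_j z_j]=\|z_j-z_0\|_{2,1}=\ell .
\]
Hence $z_j=\alpha_j(\ell+i q_j)$ with $q_j:=\cQ^\bullet(z_j)$. Since $\cQ^\bullet$ is $\kappa$-Lipschitz, $|q_j|\le\kappa|z_j|$, which yields $|q_j|\le\kappa(1-\kappa^2)^{-1/2}\ell$. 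This explicit parametrization in Lorentzian polar coordinates drives both bounds.

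For the \emph{lower bound}, I use the $\|\cdot\|_{2,1}$ comparison \eqref{eq:dist-2,1-comparable}: $\dist(z_1,z_2)\ge\|z_1-z_2\|_{2,1}$. I compute $\|z_1-z_2\|_{2,1}^2=|z_1-z_2|^2-(q_1-q_2)^2$ via a Minkowski inner product, using the fact that the lifts $(\alpha_j(\ell+iq_j),q_j)$ lie in the timelike planes spanned by $(\alpha_j,0)$ and $(i\alpha_j,1)$. Expanding,
\[
|z_1-z_2|^2=|\ell+iq_1-e^{i\phi}(\ell+iq_2)|^2 ,
\]
and subtracting $(q_1-q_2)^2$ leaves
\[
2\ell^2(1-\cos\phi)+2q_1q_2(1-\cos\phi)+2\ell(q_1-q_2)\sin\phi .
\]
The hope is that this quantity is at least $c\,\ell^2\phi^2$, but the cross terms are not obviously controlled. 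A cleaner route is therefore to argue directly in the plane. Both $z_1$ and $z_2$ have Euclidean length at least $\ell$, and their arguments are $\arg\alpha_j+\arctan(q_j/\ell)$. I then show that the angular separation of $z_1,z_2$ seen from $0$ is at least $\phi/2$. For this I use the monotonicity implicit in Lemma~\ref{lem:s-cut-from-a-to-b}: the map from directions to points at fixed $\|\cdot\|_{2,1}$-distance $\ell$ is a bijection onto the Lorentzian circle, and its angular derivative is bounded below. Then $\dist(z_1,z_2)\ge 2\ell\sin(\phi/4)\cdot(\ldots)\ge\tfrac12\phi\ell$. I expect this lower bound to be the main obstacle, because the angular map $\alpha\mapsto\arg z(\alpha)$ is only Lipschitz (with $\cQ^\bullet$ merely $\kappa$-Lipschitz). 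The idea is to differentiate the implicit equation $\Im[\overline\alpha z]=\cQ^\bullet(z)$, $\Re[\overline\alpha z]=\ell$ along the circle $|\cdot|$-parametrization, and to bound $d\arg z/d\arg\alpha$ from below by a constant using $|\nabla\cQ^\bullet|\le\kappa<1$. If this yields only a constant worse than $\tfrac12$, I will fall back to the Minkowski computation above with the observation that the angle between the timelike planes bounds the spacelike separation.

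For the \emph{upper bound}, I argue by varying the direction continuously from $\alpha_1$ to $\alpha_2$ along the short arc, so that $z(\theta)$ traces the Lorentzian circle of radius $\ell$. By the implicit function theorem for Lipschitz data, the speed satisfies $|dz/d\theta|\le A\ell$. Indeed, differentiating $z=\alpha(\ell+i\cQ^\bullet(z))$ gives
\[
dz=i\,d\theta\cdot z+i\alpha\,d\cQ^\bullet ,
\]
with $|d\cQ^\bullet|\le\kappa|dz|$. It follows that $|dz|\le(1-\kappa)^{-1}|z|\,d\theta$, and moreover $|z|\le(1-\kappa^2)^{-1/2}\ell$. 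Integrating over an arc of length $\phi$ gives $\dist(z_1,z_2)\le(1-\kappa)^{-1}(1-\kappa^2)^{-1/2}\phi\ell$, which matches the stated $A$ exactly. This strongly suggests that this computation is the intended one.
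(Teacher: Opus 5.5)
Your upper bound is correct and is essentially the paper's argument in differential form. The paper does it in one step: it writes $z_2-z_1=(\alpha_2-\alpha_1)(\ell+iq_1)+i\alpha_2(q_2-q_1)$ and uses $|q_2-q_1|\le\kappa|z_2-z_1|$, $|\ell+iq_1|\le(1-\kappa^2)^{-1/2}\ell$ and $|\alpha_2-\alpha_1|=2\sin\tfrac\phi2\le\phi$. This avoids intermediate directions and any regularity of $\theta\mapsto z(\theta)$.

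The gap is the lower bound, which you never prove, and the route you sketch cannot give the constant $\tfrac12$. The Minkowski computation stalls as you suspect. The cross term $2\ell|q_1-q_2|\sin\phi$ is of the same order $\ell^2\phi^2$ as the leading term, since a priori only $|q_1-q_2|\le\kappa A\phi\ell$ with $\kappa A$ possibly large. The remark about ``the angle between timelike planes'' is not an argument.

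The angular route can be made rigorous. From $\arg z(\theta)=\theta+\arctan(q/\ell)$, $|z'|^2=(\ell+q')^2+q^2$ and $|q'|\le\kappa|z'|$ one gets $\frac{d}{d\theta}\arg z\ge\frac1{1+\kappa}$ a.e. But an angular separation $\psi$ together with $|z_1|,|z_2|\ge\ell$ only yields $\dist(z_1,z_2)\ge 2\ell\sin\frac\psi2$. With $\psi\ge\frac\phi2$ this is $2\ell\sin\frac\phi4$, which is \emph{strictly smaller} than $\frac12\phi\ell$ for every $\phi$. Even with $\psi\ge\frac{\phi}{1+\kappa}$, the bound $2\ell\sin\frac{\phi}{2(1+\kappa)}$ falls below $\frac12\phi\ell$ when $\kappa$ is close to $1$ and $\phi$ is close to $\pi$; in the limit you get $\sqrt2\,\ell$ versus $\frac\pi2\ell$. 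So the unspecified factor ``$(\ldots)$'' is exactly where your argument breaks. Polar angle plus modulus discards the needed information, and at best you get $\dist(z_1,z_2)\ge c\,\phi\ell$ with some $c<\frac12$, not the stated inequality.

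The missing idea is a direct two-point estimate: project onto the direction orthogonal to the bisector of the two cut directions, and apply the Lipschitz bound between $z_1$ and $z_2$ themselves. Take $\alpha_{1,2}=e^{\mp i\phi/2}$ and $z_j=\alpha_j(\ell+iq_j)$. Then $\Im(z_2-z_1)=2\ell\sin\frac\phi2+(q_2-q_1)\cos\frac\phi2$, hence
\[
2\ell\sin\tfrac\phi2\,\le\,|z_2-z_1|+\cos\tfrac\phi2\,|q_2-q_1|\,\le\,\bigl(1+\kappa\cos\tfrac\phi2\bigr)\dist(z_1,z_2).
\]
Therefore $\dist(z_1,z_2)\ge\frac{2\ell\sin(\phi/2)}{1+\cos(\phi/2)}=2\ell\tan\frac\phi4\ge\frac12\phi\ell$, using $\phi\le\pi$ and $\tan x\ge x$. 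This is the paper's proof, and it needs no parametrization of intermediate cuts.
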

\begin{proof} Without loss of generality, assume that $z_0=0$, $\ell=1$, the special cut~$\gamma_1$ has direction~$e^{-\frac12i\phi}$, and~$\gamma_2$ has direction~$e^{\frac12i\phi}$. By definition of special cuts we have
\[
\begin{array}{lcl}
z_1=e^{-\frac12i\phi}(1+iy_1),&\ & \cQ^\bullet(z_1)=y_1,\\
z_2=e^{\frac12i\phi}(1+iy_2),&& \cQ^\bullet(z_2)=y_2
\end{array}
\]
for some~$y_1,y_2\in\R$ such that $|y_1|,|y_2|\le \kappa (1-\kappa^2)^{-1/2}$. It is easy to see that
\[
\dist(z_1,z_2)\le |y_2-y_1|+2\sin\tfrac\phi2\cdot (1-\kappa^2)^{-1/2}
\]
and hence
\[
\dist(z_1,z_2)-2\sin\tfrac\phi2\cdot(1-\kappa^2)^{-1/2}\,\le\, |y_2-y_1|\,\le\, \kappa\dist(z_1,z_2),
\]
which gives the desired upper bound for~$\dist(z_1,z_2)$ as $2\sin\tfrac\phi2\le\phi$.

In order to prove the lower bound, note that
\[
\dist(z_1,z_2)\,\ge\, |(y_2-\tan\tfrac\pi 2)-(y_1+\tan\tfrac\pi 2)|\cdot \cos\tfrac\phi2,
\]
which implies that 
\[
|\cQ^\bullet(z_2)-\cQ^\bullet(z_1)|\,\ge\,2\tan\tfrac\phi2-|y_2-y_1-2\tan\tfrac\phi 2|\,\ge\,2\tan\tfrac\phi2-\frac{\dist(z_1,z_2)}{\cos\frac\phi2}.
\]
Therefore,
\[
2\sin\tfrac\phi2-\dist(z_1,z_2)\le \kappa\cos\tfrac\phi2\cdot \dist(z_1,z_2),
\]
which implies the claim since~$\kappa<1$ and $2\sin\tfrac\phi2/(1+\cos\tfrac\phi2)=2\tan\tfrac\phi4\ge\tfrac\phi2$.
\end{proof}
\begin{corollary} 
\label{cor:cuts-do-not-screen-each-other} There exists a constant $c=c(c_0)>0$ such that the following holds: in the setup of Lemma~\ref{lem:polar-coordinates} one has $\dist(z_1,\gamma_2)\ge c\phi\ell$ and $\dist(z_2,\gamma_1)\ge c\phi\ell$.
\end{corollary}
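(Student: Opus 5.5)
The natural plan is to reduce the statement to a lower bound on the distance from $z_1$ to the points of $\gamma_2$ at comparable $\|\cdot\|_{2,1}$-distance from $z_0$. Points far away in this pseudo-distance are then handled by the triangle-type inequality~\eqref{eq:distorted-triangle-ineq} together with~\eqref{eq:dist-2,1-comparable}. As in the proof of Lemma~\ref{lem:polar-coordinates}, we normalize $z_0=0$, $\ell=1$, $\cQ^\bullet(0)=0$. By symmetry it suffices to bound $\dist(z_1,\gamma_2)$.

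Take $w\in\gamma_2$ with $\|w-z_0\|_{2,1}=t$. We split into two cases according to whether $|t-1|\ge\varepsilon\phi$ or $|t-1|<\varepsilon\phi$, with $\varepsilon=\varepsilon(c_0)$ small, to be fixed.

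\emph{Case $|t-1|\ge\varepsilon\phi$.} Apply~\eqref{eq:distorted-triangle-ineq} twice, once with $z_1$ as the intermediate point and once with $w$. This gives
\[
|t-1|\le \tfrac{1+\kappa^2}{1-\kappa^2}\|w-z_1\|_{2,1}\le\tfrac{1+\kappa^2}{1-\kappa^2}\dist(w,z_1),
\]
so $\dist(w,z_1)\ge \mathrm{cst}\cdot\varepsilon\phi$.

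\emph{Case $|t-1|<\varepsilon\phi$.} Let $z_2'\in\gamma_2$ be the point at $\|\cdot\|_{2,1}$-distance exactly $1$ from $z_0$; after renaming, this is the $z_2$ of Lemma~\ref{lem:polar-coordinates}. Since $w$ and $z_2'$ lie on the same special cut, their $\|\cdot\|_{2,1}$-distance equals $|t-1|$, the difference of their projections onto the direction of $\gamma_2$. Hence $\dist(w,z_2')\le(1-\kappa^2)^{-1/2}|t-1|\le(1-\kappa^2)^{-1/2}\varepsilon\phi$. The lower bound in~\eqref{eq:polar-coordinates} gives $\dist(z_1,z_2')\ge\frac12\phi$, and the triangle inequality then yields
\[
\dist(z_1,w)\ge\tfrac12\phi-(1-\kappa^2)^{-1/2}\varepsilon\phi\ge\tfrac14\phi
\]
once $\varepsilon\le\frac14(1-\kappa^2)^{1/2}$.

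Combining the two cases gives $\dist(z_1,\gamma_2)\ge c\phi$ with $c=\min\{\tfrac14,\ \varepsilon(1-\kappa^2)/(1+\kappa^2)\}$, which depends only on $c_0$. Undoing the normalization $\ell=1$ rescales every distance in both cases by $\ell$, so the bound becomes $c\phi\ell$. If $\gamma_2$ is only an initial subsegment of a special cut, the argument applies verbatim, since every point of $\gamma_2$ falls into one of the two cases. The step needing the most care is the second case. There one must check that the parametrization of $\gamma_2$ by the projection onto its direction is exactly $\|\cdot\|_{2,1}$-isometric (this is stated in Section~\ref{subsec:cuts}), and that the Euclidean distance along the cut is controlled by the Lipschitz graph property. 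Both facts are available, so no real obstacle is expected.
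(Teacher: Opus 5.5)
Your proof is correct and follows essentially the paper's route: the distorted triangle inequality~\eqref{eq:distorted-triangle-ineq} forces any point of $\gamma_2$ Euclidean-close to $z_1$ to lie at $\|\cdot\|_{2,1}$-distance close to $\ell$ from $z_0$, hence close to $z_2$, and this contradicts the lower bound in~\eqref{eq:polar-coordinates}. Your case split differs from the paper's single chain only in bookkeeping: you convert to Euclidean distance via~\eqref{eq:dist-2,1-comparable} before comparing with $\dist(z_1,z_2)\ge\frac12\phi\ell$, whereas the paper uses that Euclidean bound directly as a lower bound for $\|z_2-z_1\|_{2,1}$, which slightly misstates the constant.
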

\begin{proof} This follows from the lower bound in Lemma~\ref{lem:polar-coordinates} and the `distorted triangle inequality'~\eqref{eq:distorted-triangle-ineq}. Indeed, for each~$z'_2\in B_{c\phi\ell}(z_1)\cap \gamma_2$ we have
\[
\|z'_2-z_2\|_{2,1}\,=\,\big|\|z'_2-z_0\|_{2,1}-\ell\big|\,\le \frac{1+\kappa^2}{1-\kappa^2}\cdot c\phi\ell 
\]
and hence
\[
c\phi\ell\,\ge\,\|z'_2-z_1\|_{2,1}\,\ge\,\tfrac12\phi\ell - \frac{1+\kappa^2}{1-\kappa^2}\|z'_2-z_2\|_{2,1}\,,
\]
which is impossible if $c<\frac14(1-\kappa^2)^2/(1+\kappa^4)$. The same proof applies to~$z_2$.
\end{proof}

\bibliographystyle{plain}
\bibliography{strongRSW}

@book {astala-book,
    AUTHOR = {Astala, Kari and Iwaniec, Tadeusz and Martin, Gaven},
     TITLE = {Elliptic partial differential equations and quasiconformal
              mappings in the plane},
    SERIES = {Princeton Mathematical Series},
    VOLUME = {48},
 PUBLISHER = {Princeton University Press, Princeton, NJ},
      YEAR = {2009},
     PAGES = {xviii+677},
      ISBN = {978-0-691-13777-3},
   MRCLASS = {30C62 (30G20 35J46 35J60 35J92)},
  MRNUMBER = {2472875},
MRREVIEWER = {Olli\ Martio},
}

@article {benoist-hongler,
    AUTHOR = {Benoist, St\'{e}phane and Hongler, Cl\'{e}ment},
     TITLE = {The scaling limit of critical {I}sing interfaces is
              {$\mathrm{CLE}_3$}},
   JOURNAL = {Ann. Probab.},
  FJOURNAL = {The Annals of Probability},
    VOLUME = {47},
      YEAR = {2019},
    NUMBER = {4},
     PAGES = {2049--2086},
      ISSN = {0091-1798,2168-894X},
   MRCLASS = {60J67 (60K35 82B20 82B27)},
  MRNUMBER = {3980915},
MRREVIEWER = {Jianping\ Jiang},
       DOI = {10.1214/18-AOP1301},
       URL = {https://doi.org/10.1214/18-AOP1301},
}

@ARTICLE{cahen-garban-sepuvelda,
       author = {{Cahen}, Paul and {Garban}, Christophe and {Sep{\'u}lveda}, Avelio},
        title = "{The critical Ising magnetization field can be reconstructed from its $+/-$ interfaces}",
      journal = {arXiv e-prints},
         year = 2026,
        month = oct,
          eid = {arXiv:2610.02064},
        pages = {arXiv:2610.02064},
          doi = {10.48550/arXiv.2610.02064},
archivePrefix = {arXiv},
       eprint = {2610.02064},
 primaryClass = {math.PR},
       adsurl = {https://ui.adsabs.harvard.edu/abs/2026arXiv261002064C}
}

@article {camia-feng-Ising,
    AUTHOR = {Camia, Federico and Feng, Yu},
     TITLE = {Conformal covariance of connection probabilities in the 2{D}
              critical {FK}-{I}sing model},
   JOURNAL = {Stochastic Process. Appl.},
  FJOURNAL = {Stochastic Processes and their Applications},
    VOLUME = {189},
      YEAR = {2025},
     PAGES = {Paper No. 104734, 25},
      ISSN = {0304-4149,1879-209X},
   MRCLASS = {82B20 (60J67 60K35 82B27)},
  MRNUMBER = {4927068},
MRREVIEWER = {Olivier\ Couronn\'{e}},
       DOI = {10.1016/j.spa.2025.104734},
       URL = {https://doi.org/10.1016/j.spa.2025.104734},
}

@article {chelkak-toolbox,
    AUTHOR = {Chelkak, Dmitry},
     TITLE = {Robust discrete complex analysis: a toolbox},
   JOURNAL = {Ann. Probab.},
  FJOURNAL = {The Annals of Probability},
    VOLUME = {44},
      YEAR = {2016},
    NUMBER = {1},
     PAGES = {628--683},
      ISSN = {0091-1798,2168-894X},
   MRCLASS = {60D99 (30C35 39A12 60G50 60K35)},
  MRNUMBER = {3456348},
MRREVIEWER = {Fredrik\ Johansson Viklund},
       DOI = {10.1214/14-AOP985},
       URL = {https://doi.org/10.1214/14-AOP985},
}

@inproceedings {chelkak-icm18,
    AUTHOR = {Chelkak, Dmitry},
     TITLE = {Planar {I}sing model at criticality: state-of-the-art and
              perspectives},
 BOOKTITLE = {Proceedings of the {I}nternational {C}ongress of
              {M}athematicians---{R}io de {J}aneiro 2018. {V}ol. {IV}.
              {I}nvited lectures},
     PAGES = {2801--2828},
 PUBLISHER = {World Sci. Publ., Hackensack, NJ},
      YEAR = {2018},
      ISBN = {978-981-3272-93-4; 978-981-3272-87-3},
   MRCLASS = {82C20 (30G25 35Q82 60J67)},
  MRNUMBER = {3966512},
}

@article {chelkak-semb,
    AUTHOR = {Chelkak, Dmitry},
     TITLE = {Ising model and s-embeddings of planar graphs},
   JOURNAL = {Ann. Sci. \'{E}c. Norm. Sup\'{e}r. (4)},
  FJOURNAL = {Annales Scientifiques de l'\'{E}cole Normale Sup\'{e}rieure.
              Quatri\`eme S\'{e}rie},
    VOLUME = {57},
      YEAR = {2024},
    NUMBER = {5},
     PAGES = {1271--1346},
      ISSN = {0012-9593,1873-2151},
   MRCLASS = {82B20 (05C10 30F99 60J67 81T40 83C45)},
  MRNUMBER = {4885806},
MRREVIEWER = {Olivier\ Couronn\'{e}},
}

@article {chelkak-duminil-hongler,
    AUTHOR = {Chelkak, Dmitry and Duminil-Copin, Hugo and Hongler,
              Cl\'{e}ment},
     TITLE = {Crossing probabilities in topological rectangles for the
              critical planar {FK}-{I}sing model},
   JOURNAL = {Electron. J. Probab.},
  FJOURNAL = {Electronic Journal of Probability},
    VOLUME = {21},
      YEAR = {2016},
     PAGES = {Paper No. 5, 28},
      ISSN = {1083-6489},
   MRCLASS = {60K35 (82B20 82B26)},
  MRNUMBER = {3485347},
MRREVIEWER = {Rapha\"{e}l\ Cerf},
       DOI = {10.1214/16-EJP3452},
       URL = {https://doi.org/10.1214/16-EJP3452},
}

@ARTICLE{CHI-mixed,
       author = {{Chelkak}, Dmitry and {Hongler}, Cl{\'e}ment and {Izyurov}, Konstantin},
        title = "{Correlations of primary fields in the critical Ising model}",
      journal = {arXiv e-prints},
         year = 2021,
        month = mar,
          eid = {arXiv:2103.10263},
        pages = {arXiv:2103.10263},
          doi = {10.48550/arXiv.2103.10263},
archivePrefix = {arXiv},
       eprint = {2103.10263},
 primaryClass = {math-ph},
       adsurl = {https://ui.adsabs.harvard.edu/abs/2021arXiv210310263C}
}

@article {chelkak-izyurov-mahfouf,
    AUTHOR = {Chelkak, Dmitry and Izyurov, Konstantin and Mahfouf, R\'{e}my},
     TITLE = {Universality of spin correlations in the {I}sing model on
              isoradial graphs},
   JOURNAL = {Ann. Probab.},
  FJOURNAL = {The Annals of Probability},
    VOLUME = {51},
      YEAR = {2023},
    NUMBER = {3},
     PAGES = {840--898},
      ISSN = {0091-1798,2168-894X},
   MRCLASS = {82B20 (30G25 60K35 81T40)},
  MRNUMBER = {4583057},
MRREVIEWER = {Hasan\ Ak\i n},
       DOI = {10.1214/22-aop1595},
       URL = {https://doi.org/10.1214/22-aop1595},
}

@article {CLR1,
    AUTHOR = {Chelkak, Dmitry and Laslier, Beno\^{i}t and Russkikh,
              Marianna},
     TITLE = {Dimer model and holomorphic functions on t-embeddings of
              planar graphs},
   JOURNAL = {Proc. Lond. Math. Soc. (3)},
  FJOURNAL = {Proceedings of the London Mathematical Society. Third Series},
    VOLUME = {126},
      YEAR = {2023},
    NUMBER = {5},
     PAGES = {1656--1739},
      ISSN = {0024-6115,1460-244X},
   MRCLASS = {82B20 (30G25)},
  MRNUMBER = {4588721},
       DOI = {10.1112/plms.12516},
       URL = {https://doi.org/10.1112/plms.12516},
}

@article {chelkak-smirnov-ising,
    AUTHOR = {Chelkak, Dmitry and Smirnov, Stanislav},
     TITLE = {Universality in the 2{D} {I}sing model and conformal
              invariance of fermionic observables},
   JOURNAL = {Invent. Math.},
  FJOURNAL = {Inventiones Mathematicae},
    VOLUME = {189},
      YEAR = {2012},
    NUMBER = {3},
     PAGES = {515--580},
      ISSN = {0020-9910,1432-1297},
   MRCLASS = {82B20 (30C20 30C35 60J67 60K35)},
  MRNUMBER = {2957303},
MRREVIEWER = {Ben\ Dyhr},
       DOI = {10.1007/s00222-011-0371-2},
       URL = {https://doi.org/10.1007/s00222-011-0371-2},
}

@article {ising-5authors,
    AUTHOR = {Chelkak, Dmitry and Duminil-Copin, Hugo and Hongler,
              Cl\'{e}ment and Kemppainen, Antti and Smirnov, Stanislav},
     TITLE = {Convergence of {I}sing interfaces to {S}chramm's {SLE} curves},
   JOURNAL = {C. R. Math. Acad. Sci. Paris},
  FJOURNAL = {Comptes Rendus Math\'{e}matique. Acad\'{e}mie des Sciences.
              Paris},
    VOLUME = {352},
      YEAR = {2014},
    NUMBER = {2},
     PAGES = {157--161},
      ISSN = {1631-073X,1778-3569},
   MRCLASS = {82B20},
  MRNUMBER = {3151886},
       DOI = {10.1016/j.crma.2013.12.002},
       URL = {https://doi.org/10.1016/j.crma.2013.12.002},
}

@book {duminil-parafermion,
    AUTHOR = {Duminil-Copin, Hugo},
     TITLE = {Parafermionic observables and their applications to planar
              statistical physics models},
    SERIES = {Ensaios Matem\'{a}ticos [Mathematical Surveys]},
    VOLUME = {25},
 PUBLISHER = {Sociedade Brasileira de Matem\'{a}tica, Rio de Janeiro},
      YEAR = {2013},
     PAGES = {ii+371},
      ISBN = {978-85-8337-014-7},
   MRCLASS = {82B20 (81V35 82C20)},
  MRNUMBER = {3184487},
}

@article {duminil-garban-pete,
    AUTHOR = {Duminil-Copin, Hugo and Garban, Christophe and Pete,
              G\'{a}bor},
     TITLE = {The near-critical planar {FK}-{I}sing model},
   JOURNAL = {Comm. Math. Phys.},
  FJOURNAL = {Communications in Mathematical Physics},
    VOLUME = {326},
      YEAR = {2014},
    NUMBER = {1},
     PAGES = {1--35},
      ISSN = {0010-3616,1432-0916},
   MRCLASS = {81T10 (81P16 81Q10 81Q12 81Q15 81Q30 81Q35 81R12)},
  MRNUMBER = {3162481},
MRREVIEWER = {Rapha\"{e}l\ Cerf},
       DOI = {10.1007/s00220-013-1857-0},
       URL = {https://doi.org/10.1007/s00220-013-1857-0},
}

@article {duminil-hongler-nolin,
    AUTHOR = {Duminil-Copin, Hugo and Hongler, Cl\'{e}ment and Nolin,
              Pierre},
     TITLE = {Connection probabilities and {RSW}-type bounds for the
              two-dimensional {FK} {I}sing model},
   JOURNAL = {Comm. Pure Appl. Math.},
  FJOURNAL = {Communications on Pure and Applied Mathematics},
    VOLUME = {64},
      YEAR = {2011},
    NUMBER = {9},
     PAGES = {1165--1198},
      ISSN = {0010-3640,1097-0312},
   MRCLASS = {60K35 (82B20 82B43)},
  MRNUMBER = {2839298},
MRREVIEWER = {Andrew\ R.\ Wade},
       DOI = {10.1002/cpa.20370},
       URL = {https://doi.org/10.1002/cpa.20370},
}

@article {duminil-manolescu-tassion-fractal,
    AUTHOR = {Duminil-Copin, Hugo and Manolescu, Ioan and Tassion, Vincent},
     TITLE = {Planar random-cluster model: fractal properties of the
              critical phase},
   JOURNAL = {Probab. Theory Related Fields},
  FJOURNAL = {Probability Theory and Related Fields},
    VOLUME = {181},
      YEAR = {2021},
    NUMBER = {1-3},
     PAGES = {401--449},
      ISSN = {0178-8051,1432-2064},
   MRCLASS = {60K35 (82B27)},
  MRNUMBER = {4341078},
       DOI = {10.1007/s00440-021-01060-6},
       URL = {https://doi.org/10.1007/s00440-021-01060-6},
}

@article {garban-wu,
    AUTHOR = {Garban, Christophe and Wu, Hao},
     TITLE = {On the convergence of {FK}-{I}sing percolation to {${\rm
              SLE}(16/3, (16/3)-6)$}},
   JOURNAL = {J. Theoret. Probab.},
  FJOURNAL = {Journal of Theoretical Probability},
    VOLUME = {33},
      YEAR = {2020},
    NUMBER = {2},
     PAGES = {828--865},
      ISSN = {0894-9840,1572-9230},
   MRCLASS = {60J67},
  MRNUMBER = {4091584},
       DOI = {10.1007/s10959-019-00950-9},
       URL = {https://doi.org/10.1007/s10959-019-00950-9},
}

@book {grimmett-random-cluster-book,
    AUTHOR = {Grimmett, Geoffrey},
     TITLE = {The random-cluster model},
    SERIES = {Grundlehren der mathematischen Wissenschaften [Fundamental
              Principles of Mathematical Sciences]},
    VOLUME = {333},
 PUBLISHER = {Springer-Verlag, Berlin},
      YEAR = {2006},
     PAGES = {xiv+377},
      ISBN = {978-3-540-32890-2; 3-540-32890-4},
   MRCLASS = {60K35 (60-02 82-02 82B20 82B43)},
  MRNUMBER = {2243761},
MRREVIEWER = {Olivier\ Garet},
       DOI = {10.1007/978-3-540-32891-9},
       URL = {https://doi.org/10.1007/978-3-540-32891-9},
}

@article {kadanoff-ceva,
    AUTHOR = {Kadanoff, Leo P. and Ceva, Horacio},
     TITLE = {Determination of an operator algebra for the two-dimensional
              {I}sing model},
   JOURNAL = {Phys. Rev. B (3)},
  FJOURNAL = {Physical Review. B. Condensed Matter. Third Series},
    VOLUME = {3},
      YEAR = {1971},
     PAGES = {3918--3939},
      ISSN = {0163-1829},
   MRCLASS = {82.62},
  MRNUMBER = {389111},
MRREVIEWER = {P.\ Caldirola},
}

@ARTICLE{kemppainen-smirnov-ii,
       author = {{Kemppainen}, Antti and {Smirnov}, Stanislav},
        title = "{Conformal invariance in random cluster models. II. Full scaling limit as a branching SLE}",
      journal = {arXiv e-prints},
         year = 2016,
        month = sep,
          eid = {arXiv:1609.08527},
        pages = {arXiv:1609.08527},
          doi = {10.48550/arXiv.1609.08527},
archivePrefix = {arXiv},
       eprint = {1609.08527},
 primaryClass = {math-ph},
       adsurl = {https://ui.adsabs.harvard.edu/abs/2016arXiv160908527K}
}

@phdthesis{mahfouf-phd,
  TITLE = {{Analyse du mod{\`e}le d'Ising planaire au-del{\`a} du cadre critique Z-invariant}},
  AUTHOR = {Mahfouf, R{\'e}my},
  URL = {https://theses.hal.science/tel-03926899},
  NUMBER = {2022UPASM016},
  SCHOOL = {{Universit{\'e} Paris-Saclay}},
  YEAR = {2022},
  MONTH = Aug,
  DOI = {10.70675/c6b256aez6519z461dza770z940f8213cd7e},
  TYPE = {Theses},
  HAL_ID = {tel-03926899},
  HAL_VERSION = {v1},
}

@article {mahfouf-RSW,
    AUTHOR = {Mahfouf, R\'{e}my},
     TITLE = {Crossing estimates for the {I}sing model on general
              s-embeddings},
   JOURNAL = {Proc. Lond. Math. Soc. (3)},
  FJOURNAL = {Proceedings of the London Mathematical Society. Third Series},
    VOLUME = {131},
      YEAR = {2025},
    NUMBER = {4},
     PAGES = {Paper No. e70092, 44},
      ISSN = {0024-6115,1460-244X},
   MRCLASS = {82B20},
  MRNUMBER = {4970632},
       DOI = {10.1112/plms.70092},
       URL = {https://doi.org/10.1112/plms.70092},
}

@ARTICLE{mahfouf-near-critical,
       author = {{Mahfouf}, Remy},
        title = "{The near critical random bond ising model via embedding deformation}",
      journal = {arXiv e-prints},
         year = 2025,
        month = sep,
          eid = {arXiv:2509.08928},
        pages = {arXiv:2509.08928},
          doi = {10.48550/arXiv.2509.08928},
archivePrefix = {arXiv},
       eprint = {2509.08928},
 primaryClass = {math.PR},
       adsurl = {https://ui.adsabs.harvard.edu/abs/2025arXiv250908928M}
}

@ARTICLE{park-maximal,
       author = {{Park}, S.~C.},
        title = "{Conformal Invariance of the FK-Ising Model on Lorentz-Maximal S-Embeddings}",
      journal = {arXiv e-prints},
         year = 2025,
        month = dec,
          eid = {arXiv:2512.10142},
        pages = {arXiv:2512.10142},
          doi = {10.48550/arXiv.2512.10142},
archivePrefix = {arXiv},
       eprint = {2512.10142},
 primaryClass = {math.PR},
       adsurl = {https://ui.adsabs.harvard.edu/abs/2025arXiv251210142P}
}

@incollection {smirnov-icm06,
    AUTHOR = {Smirnov, Stanislav},
     TITLE = {Towards conformal invariance of 2{D} lattice models},
 BOOKTITLE = {International {C}ongress of {M}athematicians. {V}ol. {II}},
     PAGES = {1421--1451},
 PUBLISHER = {Eur. Math. Soc., Z\"{u}rich},
      YEAR = {2006},
      ISBN = {978-3-03719-022-7},
   MRCLASS = {82B20 (30C35 60K35 81T40 82B43)},
  MRNUMBER = {2275653},
MRREVIEWER = {Roland\ M.\ Friedrich},
}

\end{document}